\documentclass[preprint]{elsarticle}
\usepackage{amsmath,amsthm,amssymb}
\usepackage{times}
\usepackage{enumerate}
\usepackage{url}
\usepackage{lineno}
\usepackage{microtype}
\usepackage[american]{babel}
\usepackage{indentfirst}
\usepackage{geometry}
\usepackage{epsfig}
\newtheorem{result}{\textbf{Theorem}}
\newtheorem{proposition}{Proposition}[section]
\newtheorem{lemma}{Lemma}[section]
\newtheorem{definition}{Definition}[section]

\newtheorem{remark}{Remark}[section]
\newtheorem{example}{Example}[section]
\numberwithin{equation}{section}

\begin{document}
\baselineskip=15pt

\begin{frontmatter} 

\title{Homogenization of Viscous Sublinear Hamilton--Jacobi Equations with $u/\epsilon$-Dependence}

\author{Guyu Jin}
\address{Graduate School of Mathematical Sciences, The University of Tokyo, 3 Chome-8-1 Komaba, Meguro City, Tokyo 153-8902, Japan}
\ead{guyu567@g.ecc.u-tokyo.ac.jp} 


\begin{abstract}
We study the periodic homogenization of a class of viscous Hamilton--Jacobi equations with fast dependence on the unknown. This problem combines features of first-order Hamilton--Jacobi equations with \(u^\epsilon/\epsilon\)-periodic Hamiltonians and semilinear heat equations with rapidly oscillating positive potentials. In this paper, we prove qualitative homogenization results for $H(y,s,p)=F(s,p)+\eta W(y,s,p)$ when $|\eta|$ is sufficiently small, with the smallness threshold depending on the Lipschitz constant of the initial datum, and establish a large-time averaging result for a general class of evolutionary cell problems.
\end{abstract}
\begin{keyword}
Contact-type Hamilton--Jacobi equations, Viscous Hamilton--Jacobi equations, Viscosity solutions, Homogenization 
\end{keyword}

\end{frontmatter}

\section{Introduction}

\subsection{Setting of the problem}

Let $N\in\mathbb N$ and let $\epsilon>0$ be a small parameter. We consider a continuous Hamiltonian
\begin{equation*}
H:\mathbb R^N\times\mathbb R\times\mathbb R^N\to\mathbb R,
\end{equation*}
and an initial datum $u_0:\mathbb R^N\to\mathbb R.$
The unknown is a function $u^\epsilon:\mathbb R^N\times[0,\infty)\to\mathbb R,$
and we denote by $u_t^\epsilon$, $Du^\epsilon$ and $\Delta u^\epsilon$ its time derivative, spatial gradient and Laplacian, respectively. The equation studied in this paper is of the form
\begin{equation}
\begin{cases}
u_t^\epsilon
+
H\left(
\dfrac{x}{\epsilon},
\dfrac{u^\epsilon}{\epsilon},
Du^\epsilon
\right)
=
\epsilon\Delta u^\epsilon
&\text{in }\mathbb R^N\times(0,\infty),
\\[2mm]
u^\epsilon(x,0)=u_0(x)
&\text{on }\mathbb R^N,
\end{cases}\label{HJ}
\end{equation}
where
\[
u_0\in BUC(\mathbb R^N)\cap W^{1,\infty}(\mathbb R^N).
\]
Here $H$ is periodic in the first two variables and belongs to $C(\mathbb T^N\times \mathbb T\times\mathbb R^N)$. 

In order to understand the contribution of this work, we recall that the theory of homogenization for first-order Hamilton–Jacobi equation started with the famous unpublished work of Lions, Papanicolaou and Varadhan \cite{LionsPapanicolaouVaradhan1987} who completely solved the problem in the case of periodic and coercive Hamiltonians. Since the literature on homogenization theory for Hamilton--Jacobi equations is vast, we focus here on the works most relevant to the present paper. For the first-order Hamilton-Jacobi equation with $u^\epsilon/\epsilon$-periodic Hamiltonians, we first refer to Imbert and Monneau~\cite{imbert2008homogenization} and Barles ~\cite{barles2007some}. Achdou and Patrizi ~\cite{achdou2011homogenization} obtain a rate of convergence for contact-type Hamilton-Jacobi equations, and then Mitake, Ni and Tran \cite{mitake2025quantitative} establish the optimal convergence rate for convex superlinear contact-type Hamilton-Jacobi equation. Ni \cite{ni2025quantitative} obtain a quantitative homogenization result for first-order ODEs with a form related to our model, and Ni, Wang and Yan \cite{ni2023viscosity} are the first to use the method of contact Hamilton systems and implicit variational principle to study the contact-type Hamilton-Jacobi equation with $u$-periodicity. We also refer to \cite{wang2017implicit, wang2019aubry, wang2019variational} for related works particularly from the viewpoint of weak KAM theory. The viscous Hamilton--Jacobi equation has a different structure, since the associated cell problem contains an elliptic operator and the coercivity is not always necessary. Related results include the work of Barles and Souganidis~\cite{barles2001space}, and the recent quantitative homogenization result of Qian, Sprekeler, Tran and Yu~\cite{qian2024optimal}. In \cite{qian2024optimal}, the obtained convergence rate $O(\epsilon^{1/2})$ is optimal. For further results on second-order Hamilton--Jacobi homogenization, we refer to~\cite{LionsSouganidis2005,LionsSouganidis2005Viscous,Evans1992,tran2021hamilton}.

The distinguishing feature of \eqref{HJ} is the simultaneous fast dependence on the spatial variable $x/\epsilon$ and on the phase $u^\epsilon/\epsilon$. If the viscous term is removed, one recovers the contact-type first-order Hamilton--Jacobi equation. On the other hand, if the Hamiltonian has no gradient dependence and is a positive periodic function of $u^\epsilon/\epsilon$, the equation is closely related to the semilinear heat equation studied by Cesaroni, Dirr and Novaga~\cite{cesaroni2017homogenization}. In that setting the effective speed is described by a one-dimensional cell ODE, or equivalently by a travelling wave speed. Related periodic travelling waves in heterogeneous media were studied, for instance, by Chen and Namah~\cite{ChenNamah2002}. The cell problem considered here is closely related to the equations satisfied by travelling-wave solutions of reaction--diffusion equations , see \cite{berestycki2005speed,berestycki2007generalized, zhang2022homogenization}.  

The presence of the viscous term in \eqref{HJ} creates a new difficulty. For a general Hamiltonian $H(y,s,p)$, the heuristic explanations  lead to the cell problem
\begin{equation}
\begin{aligned}
c(p)(\chi_p)_z
&+
H\left(y,\chi_p,D_y\chi_p+p(\chi_p)_z\right)
\\
&=
\Delta_y\chi_p
+
2p\cdot D^2_{yz}\chi_p
+
|p|^2(\chi_p)_{zz},
\end{aligned}\label{cell2}
\end{equation}
where $c(p)$ is a constant depending on $p\in\mathbb R^N$. If a periodic solution of \eqref{cell2} is available, and $c(p)$ is continuous, then Evans' perturbed test function method~\cite{Evans1989} can be used to prove homogenization. However, solving \eqref{cell2} for a general Hamiltonian is highly nontrivial. It is worth noting that this equation is degenerate when $p=0$. So far, for general $H$, it remains unclear whether the cell problem  \eqref{cell2} admits a solution. 

In this paper we therefore focus on the class
\begin{equation}
H_\eta(y,s,p)=F(s,p)+\eta W(y,s,p),
\label{intro-second-H}
\end{equation}
where $F$ is the principal part and $\eta W$ is a spatially oscillatory perturbation. The smallness of $\eta$ is local in the gradient variable: if $K\subset\mathbb R^N$ is the compact set on which the correctors are needed, then the construction gives a number $\eta_K>0$. In the homogenization theorem, this compact set is chosen from the Lipschitz constant of $u_0$, see Section 3 for more details.

The main novelty of this work lies in the treatment of a second-order contact-type Hamilton--Jacobi homogenization problem. In the first-order theory with \(u^\epsilon/\epsilon\)-periodic Hamiltonians, the viscous term is absent, while in the semilinear heat equation setting the oscillatory forcing depends only on the fast phase \(u^\epsilon/\epsilon\) and has no Hamilton--Jacobi gradient dependence. By contrast, here the equation contains the viscous term \(\epsilon\Delta u^\epsilon\), and the oscillatory Hamiltonian may depend simultaneously on the fast space variable, the fast phase, and the gradient. This leads from the one-dimensional travelling-wave ODE of the semilinear model to a periodic-parabolic corrector equation, see Section 3 \eqref{periodic parabolic}. Parabolic equations with gradient-dependent nonlinearities have been studied in other contexts, for instance in~\cite{FernandezCaraGonzalezBurgosGuerreroPuel2006}; to the best of our knowledge, this is the first perturbative construction of periodic-parabolic correctors for viscous Hamilton--Jacobi equations with simultaneous $x/\epsilon$- and $u^\epsilon/\epsilon$-oscillations and a gradient-dependent spatial perturbation.

\subsection{Main results}

We first consider the unperturbed Hamiltonian
\begin{equation*}
H(y,s,p)=F(s,p),
\end{equation*}
where $F: \mathbb R \times \mathbb R^N \to \mathbb R$ is assumed to satisfy the following conditions:
\begin{itemize}
\item[(F1).]\textbf{(Positivity).}
There exists $c_0>0$ such that
\begin{equation}
F(s,p)\ge c_0>0
\end{equation}
for $s\in\mathbb R$, $p\in\mathbb R^N$.
\item[(F2).]\textbf{(Monotonicity along rays).}
For $\rho>0$, $s\in\mathbb R$, $p\in\mathbb R^N$,
\begin{equation}
\rho D_pF(s,\rho p)\cdot p-F(s,\rho p)<0.
\end{equation}
Equivalently,
$$
\rho\longmapsto \frac{F(s,\rho p)}{\rho}
$$
is strictly decreasing.
\item [(F3).]\textbf{(Limit at infinity).}\begin{equation}
F_\infty(p):=\lim_{\rho\to+\infty}\frac{F(s,\rho p)}{\rho}\label{recession}
\end{equation}
exists locally uniformly in $p$ and uniformly in $s$, and $F_\infty \in C(\mathbb R^N)$.
\item [(F4).]\textbf{(Regularity).}
\begin{equation}
F\in \operatorname{Lip}(\mathbb R^{N+1})\cap C^{3,\alpha}(\mathbb R^{N+1}),
\end{equation}
and there exists $L>0$ such that
\begin{equation}
|F(s,p)-F(s,q)|\le L|p-q|,
\end{equation}
and $F(s,p)$ is 1-periodic in $s$.
\end{itemize}

For each $p\in\mathbb R^N$, we look for a speed $c(p)\in\mathbb R$ and a monotone profile $\chi_p\in C^2(\mathbb R)$ satisfying
\begin{equation*}
\chi_p(z+1)=\chi_p(z)+1,
\qquad
\chi_p'(z)>0,
\end{equation*}
and
\begin{equation}
|p|^2\chi_p''(z)=c(p)\chi_p'(z)+F(\chi_p(z),p\chi_p'(z)).\label{cell1}
\end{equation}
The function $\chi_p$ is unique up to translations in $z$, and the speed $c(p)$ is uniquely determined. We define the effective Hamiltonian by
\begin{equation*}
\overline H(p):=-c(p).
\end{equation*}
We explain a heuristic derivation of (\ref{cell1}) in Section 2.

\begin{result}[Homogenization in the unperturbed case]
Assume that $F$ satisfies \textup{(F1)}--\textup{(F4)}. Let $u^\epsilon \in C(\mathbb R^N\times [0,\infty))$ solve
\begin{equation}
\begin{cases}
u_t^\epsilon+F\left(\dfrac{u^\epsilon}{\epsilon},Du^\epsilon\right)=\epsilon\Delta u^\epsilon
&\text{in }\mathbb R^N\times(0,\infty),
\\[2mm]
u^\epsilon(x,0)=u_0(x)
&\text{on }\mathbb R^N.
\end{cases}
\end{equation}
Then $u^\epsilon$ converges locally uniformly to the viscosity solution $u$ of
\begin{equation}
\begin{cases}
u_t+\overline H(Du)=0
&\text{in }\mathbb R^N\times(0,\infty),
\\
u(x,0)=u_0(x)
&\text{on }\mathbb R^N.
\end{cases}
\end{equation}
\end{result}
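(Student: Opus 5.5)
The plan is to follow Evans' perturbed test function method, using the monotone travelling-wave profile $\chi_p$ of \eqref{cell1} as corrector. This requires three preliminary facts. First, \eqref{cell1} is solvable for every $p$. Second, $c(p)$ is unique and continuous in $p$. Third, $u^\epsilon$ is uniformly bounded and Lipschitz.

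\textbf{Step 1: the cell problem.} For $p\neq0$, set $\psi=\chi_p'>0$ and write $\chi$ as a variable. Then \eqref{cell1} becomes a first-order ODE on the circle $\mathbb T$:
\[
|p|^2\psi\,\frac{d\psi}{d\chi}=c\psi+F(\chi,p\psi),
\]
for a positive $1$-periodic function $\psi(\chi)$. I will obtain existence and uniqueness of $(c,\psi)$ by a shooting argument.
\begin{itemize}
\item By (F2), the map $\psi\mapsto F(\chi,p\psi)/\psi$ is strictly decreasing.
\item Combined with (F1) and (F3), the right-hand side divided by $\psi$ is monotone in $c$ and in $\psi$.
\item Hence the Poincaré return map of the flow is monotone in $c$, and a fixed point exists for exactly one value $c(p)$. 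Bounds on $c$ come from comparing with constant $\psi$: $c(p)<0$ by (F1), and $c(p)$ is bounded below using (F3).
\item Uniqueness of $\chi_p$ up to translation follows from the standard sliding argument for monotone fronts.
\item Continuity of $c$ in $p\neq0$ follows from uniqueness and compactness.
\end{itemize}
At $p=0$ the equation degenerates to $c\chi'+F(\chi,0)=0$. This gives $c(0)=-\big(\int_0^1 F(s,0)^{-1}ds\big)^{-1}$, since $\chi'=-F(\chi,0)/c$ must integrate to one period. I must check that $c(p)\to c(0)$ as $p\to0$. I will do this with the ODE form and a singular-perturbation estimate, using that $|p|^2\psi\psi'$ is small once $\psi$ is bounded. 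Uniform bounds on $\psi$ come from (F1), (F4) and a maximum-principle argument at extrema of $\psi$.

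\textbf{Step 2: a priori estimates and half-relaxed limits.} $F$ is Lipschitz in $p$ and $|F(s,0)|$ is bounded, so $u_0\pm Ct$ are a super- and subsolution; this gives $|u^\epsilon-u_0|\le Ct$. Translation invariance in $x$ and comparison give $\|Du^\epsilon\|_\infty\le\|Du_0\|_\infty$. The $u$-dependence is not monotone, so comparison must be justified. Since $F$ is Lipschitz in $s$ with constant $\ell$, the standard change of variables $v=e^{-\lambda t}u$ (or a doubling argument with an $e^{\ell t/\epsilon}$ weight at fixed $\epsilon$) handles this. I then define the half-relaxed limits $\overline u=\limsup{}^*u^\epsilon$ and $\underline u=\liminf{}_*u^\epsilon$. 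Both are bounded on compact time intervals and agree with $u_0$ at $t=0$ by the barrier bound.

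\textbf{Step 3: perturbed test function.} Let $\phi$ be smooth and let $\overline u-\phi$ have a strict local maximum at $(x_0,t_0)$. Set $p=D\phi(x_0,t_0)$, and suppose for contradiction that $\phi_t+\overline H(p)=\theta>0$. Take the corrector
\[
\phi^\epsilon(x,t)=\epsilon\,\chi_p\Big(\frac{\phi(x,t)}{\epsilon}\Big).
\]
Then $\phi^\epsilon\to\phi$ uniformly, because $\chi_p(z)-z$ is periodic. A direct computation gives $D\phi^\epsilon=\chi_p'D\phi$, $\phi^\epsilon_t=\chi_p'\phi_t$ and $\epsilon\Delta\phi^\epsilon=\chi_p''|D\phi|^2+\epsilon\chi_p'\Delta\phi$. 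Substituting, and using \eqref{cell1} together with $\phi^\epsilon/\epsilon=\chi_p(\phi/\epsilon)$, gives near $(x_0,t_0)$:
\[
\phi^\epsilon_t+F(\phi^\epsilon/\epsilon,D\phi^\epsilon)-\epsilon\Delta\phi^\epsilon=\chi_p'\,(\phi_t-c(p))+o(1)\ge\chi_p'\theta/2>0.
\]
The error terms are $O(r)+O(\epsilon)$, since $|D\phi-p|\le Cr$ and $\chi_p',\chi_p''$ are bounded. Here $\min\chi_p'>0$ is used.

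This makes $\phi^\epsilon$ a strict supersolution in a small ball. The usual argument then contradicts the local maximum of $u^\epsilon-\phi^\epsilon$ near $(x_0,t_0)$. Making this rigorous needs comparison on a bounded domain for the non-monotone equation. Alternatively, one evaluates at a max point $(x_\epsilon,t_\epsilon)$ of $u^\epsilon-\phi^\epsilon$; there $u^\epsilon/\epsilon$ and $\phi^\epsilon/\epsilon$ differ by $(u^\epsilon-\phi^\epsilon)/\epsilon$, which need not vanish. I handle this by shifting the corrector: use $\epsilon\chi_p(\phi/\epsilon+\tau_\epsilon)$ with $\tau_\epsilon$ chosen so that the max value of $u^\epsilon-\phi^\epsilon$ is $0$. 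The family is continuous in $\tau$, and $\chi_p(z+\tau)$ is still a solution, so this is possible. With this shift the phases match at the touching point, and the viscosity inequality for $u^\epsilon$ contradicts the strict supersolution inequality.

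The case $p=0$ uses the explicit profile, and the estimate is identical. Hence $\overline u$ is a subsolution and, symmetrically, $\underline u$ a supersolution. Comparison for $u_t+\overline H(Du)=0$ with continuous $\overline H$ then gives $\overline u=\underline u=u$ and local uniform convergence.

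\textbf{Main obstacle.} I expect the main obstacle to be Step 1: proving existence and uniqueness of $c(p)$ together with uniform bounds on $\chi_p'$, and above all continuity of $c$ at the degenerate point $p=0$. I would attack it first through the reduced first-order periodic ODE in $\psi(\chi)$ and its monotone shooting structure.
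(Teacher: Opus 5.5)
Your overall route is the paper's: reduce the cell ODE to a first-order periodic ODE in the phase variable, treat $p=0$ by a separate compactness argument, pass to half-relaxed limits, and use the shifted test function $\epsilon\chi_p(\phi/\epsilon+\tau_\epsilon)$ with $\max(u^\epsilon-\Phi^\epsilon)=0$. That last device is exactly Lemma~\ref{phase-shift contact lemma}: the zero exists because $M_\epsilon(\tau+1)=M_\epsilon(\tau)-\epsilon$, and you also need $\epsilon\tau_\epsilon\to0$ so that the contact points converge to $(x_0,t_0)$. Step 3 is fine.

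The genuine gap is in Step 1, in how $c(p)$ is selected for $p\neq0$. Periodicity of $\psi$ does not determine $c$. For every $\mu=-c>F_\infty(p)$, (F1) and (F3) give an invariant interval $[m,M]$ for $\psi'=\Theta_\mu(\chi,\psi)$, so the return map has a fixed point. By strict monotonicity of $\Theta_\mu$ in $\psi$, this positive periodic orbit $v_\mu$ is unique. So a whole half-line of speeds admits periodic orbits, and your claim that the monotone return map has a fixed point for exactly one $c$ is false.

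What pins down $c(p)$ is the normalization $\int_0^1 d\chi/v_\mu(\chi)=1$, i.e.\ $\chi_p(z+1)=\chi_p(z)+1$. You use this only at $p=0$. The missing step is Proposition~\ref{monotonicity}:
\begin{itemize}
\item $T_p(\mu)=\int_0^1 v_\mu^{-1}$ is continuous and strictly increasing; this is where your monotonicity in $c$ actually enters, since $v_{\mu_2}<v_{\mu_1}$ for $\mu_2>\mu_1$;
\item $T_p(\mu)\to0$ as $\mu\downarrow F_\infty(p)$, by (F2)--(F3);
\item $T_p(\mu)\to+\infty$ as $\mu\to+\infty$, by the Lipschitz bound in (F4).
\end{itemize}
Your $p=0$ continuity argument needs the same normalization. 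It gives $\min v_n\le 1\le\max v_n$, hence $\mu_n\ge c_0$ and the uniform bounds on $v_n$. In the limit it gives $\mu_*\int_0^1F(s,0)^{-1}ds=1$, which identifies $\mu_*=\mu(0)$.

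Two smaller claims in Step 2 are also wrong as stated, though harmless once fixed.
\begin{itemize}
\item Since $u_0$ is only Lipschitz, $u_0\pm Ct$ are not barriers for the viscous equation: a convex kink can be touched from below by test functions with arbitrarily large Laplacian. In fact $|u^\epsilon-u_0|\le Ct$ can fail for fixed $\epsilon$, because heat smoothing of a kink produces an $O(\sqrt{\epsilon t})$ term. Mollify $u_0$ at scale $\delta$ as in Lemma~\ref{lem:initial-trace}. This gives $|u^\epsilon-u_0|\le CL_0\delta+(B_{L_0}+C\epsilon L_0/\delta)t$, which still yields the initial trace.
\item Because of the $u/\epsilon$ dependence, $u^\epsilon+C$ is again a solution only for $C\in\epsilon\mathbb Z$. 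Translation plus comparison therefore gives $u^\epsilon(x+h,t)\le u^\epsilon(x,t)+L_0|h|+\epsilon$, not $\|Du^\epsilon\|_\infty\le L_0$. This weaker bound is enough for the half-relaxed limits.
\end{itemize}
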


We next consider the perturbed Hamiltonian \eqref{intro-second-H}.For each fixed $p$, the cell problem is to find a corrector $\chi_p^\eta(y,z)=z+w_p^\eta(y,z)$ and a constant $c_\eta(p)$ satisfying the following parabolic equation:
\begin{equation*}
\begin{aligned}
c_\eta(p)(\chi_p^\eta)_z
&+
F\left(\chi_p^\eta,D_y\chi_p^\eta+p(\chi_p^\eta)_z\right)
\\
&+
\eta W\left(y,\chi_p^\eta,D_y\chi_p^\eta+p(\chi_p^\eta)_z\right)
\\
&=
\Delta_y\chi_p^\eta
+
2p\cdot D^2_{yz}\chi_p^\eta
+
|p|^2(\chi_p^\eta)_{zz}.
\end{aligned}
\end{equation*}
We construct these correctors by linearizing at the one-dimensional profile, using the Fredholm alternative for the corresponding parabolic operator, and then applying a fixed point argument. The associated effective Hamiltonian is
\begin{equation*}
\overline H_\eta(p):=-c_\eta(p).
\end{equation*}

\begin{result}[Homogenization in the perturbative case]
Let
\begin{equation*}
L_0:=\operatorname{Lip}(u_0),
\qquad
K_0:=\overline{B_{L_0}(0)}.
\end{equation*}
Assume that $F$ satisfies \textnormal{(F1)}--\textnormal{(F4)}, that
\[
W\in C_{\mathrm{loc}}^{3,\alpha}
(\mathbb T^N\times\mathbb T\times\mathbb R^N),
\]
and that there exists $L>0$ such that
\[
|W(x,r,p)-W(y,s,q)|
\leq
L\big((1+|p|+|q|)|x-y|+|r-s|+|p-q|\big)
\]
for all $x,y\in\mathbb R^N$, $r,s\in\mathbb R$, and $p,q\in\mathbb R^N$. Then there exists $\eta_{K_0}>0$ such that, if $|\eta|\le\eta_{K_0}$, the solution $u^\epsilon \in C(\mathbb R^N\times [0,\infty))$ of
\begin{equation}
\begin{cases}
u_t^\epsilon+
F\left(\dfrac{u^\epsilon}{\epsilon},Du^\epsilon\right)
+
\eta W\left(\dfrac{x}{\epsilon},\dfrac{u^\epsilon}{\epsilon},Du^\epsilon\right)
=\epsilon\Delta u^\epsilon
&\text{in }\mathbb R^N\times(0,\infty),
\\[2mm]
u^\epsilon(x,0)=u_0(x)
&\text{on }\mathbb R^N
\end{cases}
\end{equation}
converges locally uniformly to the viscosity solution $u$ of
\begin{equation}
\begin{cases}
u_t+\overline H_\eta(Du)=0
&\text{in }\mathbb R^N\times(0,\infty),
\\
u(x,0)=u_0(x)
&\text{on }\mathbb R^N.
\end{cases}
\end{equation}
\end{result}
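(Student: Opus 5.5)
We argue by the perturbed test function method, taking as given the construction of periodic-parabolic correctors described before the statement. For $p$ in a neighborhood $K$ of $K_0$ and $|\eta|\le\eta_K$, we obtain $\chi_p^\eta(y,z)=z+w_p^\eta(y,z)$ with $w_p^\eta$ periodic in $(y,z)$ and $(\chi_p^\eta)_z\ge\delta>0$, together with the speed $c_\eta(p)$. The corrector is built by linearizing at the one-dimensional profile $\chi_p$ of \eqref{cell1}, applying the Fredholm alternative to the periodic-parabolic operator (whose kernel is spanned by $\chi_p'$, the $z$-translation mode; the solvability condition fixes $c_\eta(p)$), and closing with a contraction in $C^{2,\alpha}$. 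From this construction we also record that $p\mapsto c_\eta(p)$ is continuous on $K$ and that $\chi_p^\eta$, $D\chi_p^\eta$, $D^2\chi_p^\eta$ are bounded uniformly in $p\in K$.

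\textbf{A priori bounds.} First we need uniform Lipschitz bounds on $u^\epsilon$ so that only gradients in $K$ matter. The equation is invariant under spatial translations by $\epsilon\mathbb Z^N$ but not under arbitrary ones, so $\|Du^\epsilon\|_\infty\le L_0$ is not immediate. Instead we use the comparison principle with the barriers $u_0(x)\pm Ct$. Since $F$ and $W$ are bounded on bounded gradient sets, and $\epsilon\Delta$ is handled after mollifying $u_0$ at scale $\epsilon$, these barriers give $|u^\epsilon-u_0|\le Ct$ and a time-Lipschitz bound. A spatial gradient bound near $L_0$ is harder. Our plan is to compare $u^\epsilon(x+h,t)$ with $u^\epsilon(x,t)$ for $h\in\epsilon\mathbb Z^N$, which gives oscillation control up to $O(\epsilon)$ errors. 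We then use that half-relaxed limits are unaffected by $O(\epsilon)$ errors, so we only need the limit $u$ to satisfy $\mathrm{Lip}(u(\cdot,t))\le L_0$. This follows because the effective equation is spatially homogeneous and its solution preserves $\mathrm{Lip}(u_0)$. Consequently only test functions with $D\phi\in K$, with $K$ a small neighborhood of $K_0$, are needed. This is exactly why $\eta_{K_0}$ depends on $L_0$.

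\textbf{Convergence.} Define the half-relaxed limits $\overline u=\limsup^* u^\epsilon$ and $\underline u=\liminf_* u^\epsilon$. The bounds above give $\overline u=\underline u=u_0$ at $t=0$. To prove the subsolution property, suppose $\overline u-\phi$ has a strict local maximum at $(x_0,t_0)$, and set $p=D\phi(x_0,t_0)\in K$. Assume for contradiction that $\phi_t+\overline H_\eta(p)\ge\theta>0$ there. Following Imbert--Monneau and Barles for $u/\epsilon$ dependence, we define $\phi^\epsilon$ implicitly by
\[
\frac{\phi(x,t)}{\epsilon}=\chi_{p}^\eta\Big(\frac{x}{\epsilon},\frac{\phi^\epsilon(x,t)}{\epsilon}\Big),
\]
which is solvable because $(\chi_p^\eta)_z>0$; moreover $\phi^\epsilon=\phi+O(\epsilon)$ since $w$ is bounded. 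Differentiating the implicit relation and substituting into the cell equation, we find that $\phi^\epsilon$ is a strict supersolution of the $\epsilon$-equation near $(x_0,t_0)$ for small $\epsilon$. Concretely, $\phi^\epsilon_t=\phi_t/(\chi_p^\eta)_z$, the gradient $D\phi^\epsilon$ matches $D_y\chi+p\chi_z$ after dividing by $\chi_z$, and the viscous term reproduces the right-hand side of the cell equation. We replace $p$ by $D\phi(x,t)$ only up to errors controlled by $|x-x_0|+|t-t_0|$ and by continuity in $p$. Since $u^\epsilon-\phi^\epsilon$ has a local maximum near $(x_0,t_0)$, this contradicts the definition of a viscosity subsolution. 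The supersolution property follows symmetrically. Comparison for $u_t+\overline H_\eta(Du)=0$, whose Hamiltonian is continuous, then gives $\overline u\le\underline u$, and hence locally uniform convergence.

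\textbf{Main obstacle.} The main obstacle is the exact algebra of the implicit test function: the $1/\epsilon$ terms must cancel exactly via the cell equation, and the mixed term $2p\cdot D^2_{yz}\chi$ must appear with the correct Jacobian factors. A second difficulty is making sure the gradient confinement to $K$ is rigorous. For the algebra, we would first carry it out for the unperturbed case of Theorem 1, where $\chi$ depends only on $z$, and then add the $y$-dependence.
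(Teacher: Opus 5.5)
The overall architecture (half-relaxed limits, Evans' perturbed test function, confining the test gradients, comparison for the effective equation) matches the paper. However, the central step does not work as written, for two reasons.

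\textbf{The test function is inverted.} Your implicitly defined $\phi^\epsilon$ is the inverse of the function the cell equation is built for. From $\phi=\epsilon\chi_p^\eta(x/\epsilon,\phi^\epsilon/\epsilon)$ you correctly get $\phi^\epsilon_t=\phi_t/(\chi_p^\eta)_z$. But the gradient is $D\phi^\epsilon=(D\phi-D_y\chi_p^\eta)/(\chi_p^\eta)_z$, and the phase slot of the Hamiltonian receives $\phi^\epsilon/\epsilon$. That is the $z$-argument of $\chi_p^\eta$, not its value. The cell equation only controls $H(y,\chi_p^\eta,D_y\chi_p^\eta+p(\chi_p^\eta)_z)$, so nothing cancels. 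Already in the unperturbed case you would need to relate $F(z,p/\chi_p'(z))$ to $F(\chi_p(z),p\chi_p'(z))$. The correct object is the direct composition
\[
\Phi_s^\epsilon(x,t)=\epsilon\,\chi_p^\eta\Big(\frac{x}{\epsilon},\frac{\phi(x,t)}{\epsilon}+s\Big).
\]
For it, $\Phi_s^\epsilon/\epsilon=\chi_p^\eta$, $(\Phi_s^\epsilon)_t=(\chi_p^\eta)_z\phi_t$, $D\Phi_s^\epsilon=D_y\chi_p^\eta+(\chi_p^\eta)_zD\phi$, and $\epsilon\Delta\Phi_s^\epsilon=\Delta_y\chi_p^\eta+2D\phi\cdot D^2_{yz}\chi_p^\eta+|D\phi|^2(\chi_p^\eta)_{zz}+\epsilon(\chi_p^\eta)_z\Delta\phi$. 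Subtracting the cell equation leaves $(\chi_p^\eta)_z(\phi_t-c_\eta(p))$ plus terms that vanish as $D\phi\to p$ and $\epsilon\to0$. This uses the uniform $C^2$ bounds on the corrector and the Lipschitz dependence of $F$ and $W$ on the gradient. The lower bound $(\chi_p^\eta)_z\ge m_K$ then gives $\phi_t\le c_\eta(p)$.

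\textbf{The phase at the contact point is not controlled.} The step ``$u^\epsilon-\phi^\epsilon$ has a local maximum, contradicting the subsolution property'' ignores the phase. At a maximum point $(x_\epsilon,t_\epsilon)$, the viscosity inequality evaluates $H$ at $u^\epsilon(x_\epsilon,t_\epsilon)/\epsilon$. This differs from the test function's own phase by $M_\epsilon/\epsilon$, where $M_\epsilon$ is the maximum value, and that quantity is uncontrolled modulo $1$. You cannot absorb it by adding $M_\epsilon$ to the test function, because only shifts in $\epsilon\mathbb Z$ are compatible with the $u/\epsilon$-periodicity. So a strict supersolution inequality computed at the test function's phase gives no contradiction.

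The paper resolves this with the phase-shift contact lemma. Since $\Phi_{s+1}^\epsilon=\Phi_s^\epsilon+\epsilon$, the map $s\mapsto M_\epsilon(s)=\max_{\overline Q}(u^\epsilon-\Phi_s^\epsilon)$ is continuous with $M_\epsilon(s+1)=M_\epsilon(s)-\epsilon$. Hence some $s_\epsilon$ gives $M_\epsilon(s_\epsilon)=0$. Then $u^\epsilon=\Phi^\epsilon_{s_\epsilon}$ at the contact point, the phases agree exactly, and $\epsilon s_\epsilon\to0$ forces $(x_\epsilon,t_\epsilon)\to(x_0,t_0)$. Alternatively, once the correct $\Phi^\epsilon$ is a strict classical supersolution on a small cylinder, a local comparison using the admissible shifts $\Phi^\epsilon+\epsilon m$, $m\in\mathbb Z$, would work with only $O(\epsilon)$ loss. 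Either way, some such device is required.

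\textbf{The gradient confinement is circular as phrased, but easily fixed.} You cannot invoke Lipschitz preservation by the effective equation before knowing $\overline u$ solves it, and that is only available at test points with $D\phi\in K$. Your own first idea already suffices. Since $u^\epsilon(\cdot+\epsilon k,\cdot)$ and $u^\epsilon+\epsilon m$ ($k\in\mathbb Z^N$, $m\in\mathbb Z$) are again solutions, comparison gives $u^\epsilon(x+\epsilon k,t)\le u^\epsilon(x,t)+\epsilon\lceil L_0|k|\rceil$. Taking $\epsilon k_\epsilon\to h$ shows that $\overline u$ and $\underline u$ are $L_0$-Lipschitz in $x$. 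Hence every test gradient lies in $K_0$, as in the paper's spatial Lipschitz lemma.
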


Our argument requires $|\eta|$ to be sufficiently small in order to make the fixed-point map contractive and to preserve the strict monotonicity of $\chi_p^\eta$ in the $z$-variable; see Section~3. The assumptions on $W$ are used to establish the comparison principle; see Appendix~A.

Finally, we discuss the large-time average of the cell evolution associated with a general periodic Hamiltonian satisfying the assumptions \textup{(H1)}--\textup{(H2)} in Section 4. If $w=w(\zeta,\tau)$ solves
\begin{equation*}
\begin{cases}
w_\tau+H(\zeta,p\cdot\zeta+w,p+D_\zeta w)=\Delta_\zeta w, &\text{in }\mathbb R^N\times(0,\infty),
\\[1mm]
w(\zeta,0)=0,&\text{on }\mathbb R^N
\end{cases}
\end{equation*}
then there exists a constant $\lambda(p)$ such that
\begin{equation*}
\frac{w(\zeta,\tau)}{\tau}\to\lambda(p)
\qquad\text{as }\tau\to+\infty,
\end{equation*}
locally uniformly in $\zeta$. For the perturbative Hamiltonian $H_\eta(y,s,p)=F(s,p)+\eta W(y,s,p)$, this long-time constant agrees with the cell speed:
\begin{equation*}
\lambda(p)=c_\eta(p)=-\overline H_\eta(p).
\end{equation*}
Thus the speed obtained from the corrector and the averaged constant obtained from the cell evolution coincide.

The paper is organized as follows. In Section 2, we study the unperturbed Hamiltonian $F(s,p)$. We derive the cell ODE, construct the monotone corrector, prove the uniqueness and continuity of the cell speed, and establish homogenization in this case. In Section 3, we treat $F(s,p)+\eta W(y,s,p)$ and construct correctors using periodic-parabolic Fredholm theory and a fixed point argument. In Section 4, we prove the large-time averaged cell result and identify the averaged constant with the corrector speed in the perturbative class. Section 5 gives examples. The appendices contain the comparison principle, a uniform bound and initial-trace lemma, and the phase-shift contact lemma used in the perturbed test function argument.

\section{Homogenization for the unperturbed case}

In this section, we begin with Hamiltonians of the form
\begin{equation}
 H(y,s,p)=F(s,p),
\end{equation}
that is, $H$ is independent of $y$.

Throughout this section, we work under assumptions \textup{(F1)--(F4)}.

\subsection{Heuristic explanations of the cell ODE}

We start with a formal computation. Suppose that
\begin{equation*}
v_p^\epsilon(x,t):=\epsilon \chi\left(\frac{p\cdot x+ct}{\epsilon}\right)
\end{equation*}
is a formal solution of (\ref{HJ}). Here $c$ is a constant depending on $p$. Then,
$$
Dv_p^\epsilon
=
p\,\chi'\left(\frac{p\cdot x+ct}{\epsilon}\right),
\qquad
\epsilon\Delta v_p^\epsilon
=
|p|^2\chi''\left(\frac{p\cdot x+ct}{\epsilon}\right),
\qquad
\partial_t v_p^\epsilon
=
c\,\chi'\left(\frac{p\cdot x+ct}{\epsilon}\right).
$$
This leads formally to the cell equation
\begin{equation*}
c\chi'(z)+F(\chi(z),p\chi'(z))=|p|^2\chi''(z),
\end{equation*}
where
$$
z=\frac{p\cdot x+ct}{\epsilon}.
$$

\medskip
\begin{definition}
 We say that $\chi_p$ is the corrector associated with $p$ in the unperturbed case if
$$
\chi_p\in C^2(\mathbb R)
$$
and $c(p)$ satisfies the following conditions:

\[
\begin{cases}
\chi_p(z+1)=\chi_p(z)+1,\qquad \chi_p'(z)>0
\quad\text{for each }z\in\mathbb R,\\[2mm]
\chi_p \text{ and } c(p) \text{ solve }

|p|^2\chi_p''(z)
=
c(p)\chi_p'(z)+F(\chi_p(z),p\chi_p'(z)).

\end{cases}
\]
The constant $c(p)$ is called the cell speed associated with $p$.
\end{definition}

\subsection{Construction of the one-dimensional corrector}

We first consider the case $p\ne0$. Let
$$
v(s):=\chi_p'(z)
\quad\text{with }s=\chi_p(z),
\quad
c=c(p).
$$
Since $\chi_p$ is strictly increasing, $v$ is $1$-periodic, and
$$
\chi_p''(z)
=
(v(s))'
=
v_s\chi_p'(z)
=
v_s v.
$$
Thus we obtain the first-order periodic ODE
\begin{equation*}
|p|^2v(s)v'(s)=c\,v(s)+F(s,pv(s)).
\end{equation*}
Set $\mu:=-c$. Then
\begin{equation*}
v'(s)=\frac{-\mu v(s)+F(s,pv(s))}{|p|^2v(s)}.
\end{equation*}
We look for a positive $1$-periodic solution $v$ of this equation. The normalization requires
\begin{equation*}
\int_0^1\frac{1}{v_\mu(s)}\,ds=1,
\end{equation*}
because $\chi_p(z+1)=\chi_p(z)+1$.

Define
\begin{equation*}
\Theta_\mu(s,v):=\frac{-\mu v+F(s,pv)}{|p|^2v},
\qquad v>0.
\end{equation*}
By (F1), $\Theta_\mu(s,v)\to+\infty$ as $
v\to0^+.$
Since (\ref{recession}),
we choose $\mu>F_\infty(p)$. Then there exists $M\gg1$ such that
\begin{equation*}
F(s,pv)\le \frac{\mu+F_\infty(p)}{2}\,v
\quad\text{for }v\ge M.
\end{equation*}
Therefore $\Theta_\mu(s,M)<0$. Choose $m>0$ sufficiently small so that $\Theta_\mu(s,m)>0$.
Hence $[m,M]$ is invariant under the flow of
$$
v'=\Theta_\mu(s,v).
$$
Let
$
P_\mu(v_0):=v(1;v_0).
$
Then
$$
P_\mu:[m,M]\to[m,M]
$$
is continuous. By Brouwer's fixed point theorem, there exists $v_0\in[m,M]$ such that
$
P_\mu(v_0)=v_0.
$
Thus there exists a positive periodic solution $v_\mu$.

We now prove uniqueness. Observe that
\begin{equation*}
\frac{d}{dv}\left(\frac{F(s,pv)}{v}\right)
=
\frac{vD_pF(s,pv)\cdot p-F(s,pv)}{v^2}<0.
\end{equation*}
Therefore
$
\Theta_\mu(s,v)
$
is strictly decreasing in $v$.

If $v_1,v_2$ are two positive periodic solutions, then by the uniqueness of the first-order ODE, $v_1$ cannot touch $v_2$. Assume that $v_2(s)>v_1(s)$ for each $s\in \mathbb R$.
Then
$
(v_2-v_1)'
=
\Theta_\mu(s,v_2)-\Theta_\mu(s,v_1)<0.
$
Therefore
$
v_2(1)-v_1(1)<v_2(0)-v_1(0),
$
which contradicts the periodicity.

To impose the normalization $\int_0^1 v_\mu(s)^{-1}\,ds=1$, we introduce the map below.
Set
\begin{equation*}
T_p(\mu):=\int_0^1\frac{1}{v_\mu(s)}\,ds,
\qquad
\mu>F_\infty(p).
\end{equation*}

\medskip
\begin{proposition}
 $T_p$ is continuous and strictly increasing in $\mu$. Moreover,
\begin{equation}
\lim_{\mu\downarrow F_\infty(p)}T_p(\mu)=0,
\qquad
\lim_{\mu\to+\infty}T_p(\mu)=+\infty.
\end{equation}
\label{monotonicity}
\end{proposition}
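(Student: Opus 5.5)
Throughout, $p\neq0$ is fixed and, for $\mu>F_\infty(p)$, $v_\mu$ denotes the unique positive $1$-periodic solution of $v'=\Theta_\mu(s,v)$ constructed above. The plan is a comparison argument for this ODE: monotonicity first, then continuity, then the two limits.

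\textbf{Strict monotonicity.} Fix $F_\infty(p)<\mu_1<\mu_2$. For $v>0$,
\[
\Theta_{\mu_2}(s,v)-\Theta_{\mu_1}(s,v)=-\frac{\mu_2-\mu_1}{|p|^2}<0,
\]
so $\Theta_\mu$ is strictly decreasing in $\mu$. I will show that $v_{\mu_2}<v_{\mu_1}$ everywhere.

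First, the two curves cannot cross. Where $v_{\mu_2}=v_{\mu_1}$, the difference $d=v_{\mu_2}-v_{\mu_1}$ has $d'<0$, so every zero of $d$ is a strict down-crossing. A periodic function cannot have only down-crossings, so $d$ has no zero.

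Second, $d$ cannot be positive. If $v_{\mu_2}>v_{\mu_1}$ everywhere, then using that $\Theta$ decreases in both $\mu$ and $v$,
\[
d'=\Theta_{\mu_2}(s,v_{\mu_2})-\Theta_{\mu_1}(s,v_{\mu_1})<\Theta_{\mu_1}(s,v_{\mu_2})-\Theta_{\mu_1}(s,v_{\mu_1})<0 .
\]
Integrating over one period gives $0<0$, a contradiction. Hence $v_{\mu_2}<v_{\mu_1}$, so $1/v_{\mu_2}>1/v_{\mu_1}$ and $T_p(\mu_2)>T_p(\mu_1)$.

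\textbf{Continuity.} Fix $\mu_0$ and take $\mu$ in a compact interval $[\mu_0-\delta,\mu_0+\delta]$ inside $(F_\infty(p),\infty)$.
\begin{itemize}
\item[(i)] The invariant strip $[m,M]$ can be chosen uniformly on this interval: take $M$ for $\mu_0-\delta$ and $m$ for $\mu_0+\delta$. This works because $\Theta_\mu$ decreases in $\mu$.
\item[(ii)] Uniqueness of the periodic orbit places $v_\mu$ in $[m,M]$. Indeed, $v_\mu$ attains its maximum at a point where $v_\mu'=0$. Since $\Theta_\mu(s,v)<0$ for $v\ge M$, this forces $\max v_\mu<M$; likewise $\min v_\mu>m$.
\item[(iii)] Take $\mu_k\to\mu_0$. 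The $v_{\mu_k}$ are uniformly bounded in $C^1$, so Arzel\`a--Ascoli gives a subsequence converging uniformly to a positive periodic solution at $\mu_0$. By uniqueness the limit is $v_{\mu_0}$, so the whole sequence converges.
\end{itemize}
Uniform convergence together with the lower bound $m>0$ then gives $T_p(\mu_k)\to T_p(\mu_0)$.

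\textbf{Limit $\mu\to+\infty$.} I will show $\max v_\mu\to0$.
\begin{itemize}
\item[(i)] At a maximum point $s^*$ we have $v_\mu'(s^*)=0$, so $\mu v_\mu(s^*)=F(s^*,pv_\mu(s^*))$.
\item[(ii)] Since $F$ is Lipschitz in $p$ and periodic in $s$, we have $F(s,q)\le C_0+L|q|$ with $C_0:=\sup_sF(s,0)$.
\item[(iii)] Hence $\mu v_\mu(s^*)\le C_0+L|p|v_\mu(s^*)$, which gives $\max v_\mu\le C_0/(\mu-L|p|)\to0$.
\end{itemize}
Therefore $T_p(\mu)\ge 1/\max v_\mu\to+\infty$.

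\textbf{Limit $\mu\downarrow F_\infty(p)$.} This is the main obstacle: I need $\min v_\mu\to\infty$ and then uniform largeness of $v_\mu$. Two facts drive the argument.
\begin{itemize}
\item[(i)] At a minimum point, $\mu v_\mu=F(s,pv_\mu)$, so $F(s,pv_\mu)/v_\mu=\mu$ there.
\item[(ii)] By (F2), $\rho\mapsto F(s,\rho p)/\rho$ is strictly decreasing, and by (F3) it tends to $F_\infty(p)$ uniformly in $s$. Hence for each $R$ there is $\kappa_R>0$ with $F(s,pv)/v\ge F_\infty(p)+\kappa_R$ for all $v\le R$ and all $s$. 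Strictness comes from (F2), and uniformity in $s$ from periodicity and compactness.
\end{itemize}
So once $\mu<F_\infty(p)+\kappa_R$, (i) and (ii) force $\min v_\mu>R$. Thus $\min v_\mu\to\infty$ and $T_p(\mu)\le1/\min v_\mu\to0$.

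The point needing care is the application of (F2) at the minimum. Only the equation $v'=0$ at an extremum is used, so no oscillation estimate on $v_\mu$ is required.
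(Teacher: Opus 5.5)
Your proof is correct and follows the same route as the paper. Extremum-point comparison for the periodic ODE gives the monotonicity and both limits, and your two limit arguments are essentially the paper's, with the uniform-in-$s$ margin $\kappa_R$ made explicit. The differences are minor: you prove $v_{\mu_2}<v_{\mu_1}$ by ruling out crossings and then integrating $d'<0$ over a period, where the paper uses a single maximum-point argument for $v_{\mu_2}-v_{\mu_1}$; and your uniform invariant strip plus Arzel\`a--Ascoli plus uniqueness argument supplies the compactness that the paper's appeal to ``standard continuous dependence'' leaves implicit.
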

\begin{proof}
 Continuity follows from the standard continuous dependence of ODE solutions on parameters. If $\mu_2>\mu_1$, then
$
\Theta_{\mu_2}(s,v)<\Theta_{\mu_1}(s,v).
$

Let
$
w:=v_{\mu_2}-v_{\mu_1}.
$
Assume that $\max_{\mathbb R} w\ge0$ and $w$ attains its maximum at $s_0$. Then
$
w'(s_0)=0.
$
Because
$
v_{\mu_2}(s_0)\ge v_{\mu_1}(s_0),
$
we have
$
\Theta_{\mu_2}(s_0,v_{\mu_2}(s_0))
\le
\Theta_{\mu_2}(s_0,v_{\mu_1}(s_0))
<
\Theta_{\mu_1}(s_0,v_{\mu_1}(s_0)).
$
Therefore
$$
v_{\mu_2}'(s_0)<v_{\mu_1}'(s_0),
$$
which contradicts
$$
w'(s_0)=v_{\mu_2}'(s_0)-v_{\mu_1}'(s_0)=0.
$$
Thus
$
v_{\mu_2}(s)<v_{\mu_1}(s)
\text{ for every }s\in\mathbb R.
$
Consequently,
$
T_p(\mu_2)>T_p(\mu_1).
$

For $\mu\to+\infty$, suppose that $v_\mu$ attains its maximum at $s_1$. Then
$
v_\mu'(s_1)=0, 
\mu v_\mu(s_1)=F(s_1,pv_\mu(s_1)).
$
By (F4), there exists $L>0$ such that
$
F(s_1,pv_\mu(s_1))
\le F(s_1,0)+L|p|v_\mu(s_1).
$
Thus
\begin{equation*}
(\mu-L|p|)v_\mu(s_1)\le F(s_1,0).
\end{equation*}
We deduce that
$
v_\mu\to0
$
uniformly, so
$
T_p(\mu)\to+\infty
$
as $\mu\to+\infty$.

For $\mu\downarrow F_\infty(p)$, fix $R>0$. By (F2) and (F3), for every $s\in\mathbb R$,
\begin{equation*}
\frac{F(s,pR)}{R}>F_\infty(p).
\end{equation*}
Pick $\mu>F_\infty(p)$, which is close enough to $F_\infty(p)$, such that
\begin{equation*}
-\mu R+F(s,pR)>0.
\end{equation*}
Then
$
\Theta_\mu(s,R)>0.
$
If $v_\mu(s)<R$ and attains its minimum at $s_2$, then
$
v_\mu'(s_2)=0,
$
but
$
\Theta_\mu(s_2,v_\mu(s_2))>0,
$
which is a contradiction. Thus
$
v_\mu(s)\ge R.
$
Therefore
$
T_p(\mu)\le\frac1R.
$
Letting $R\to+\infty$ gives
$$
T_p(\mu)\to0.
$$
\end{proof}

By Proposition \ref{monotonicity}, there exists a unique $\mu(p)>F_\infty(p)$ such that
\begin{equation*}
T_p(\mu(p))=1.
\end{equation*}
Define 
\begin{equation}
c(p):=-\mu(p).
\end{equation}

We now recover $\chi_p(z)$. Write
$
v_p=v_{\mu(p)}.
$
Define
\begin{equation*}
z(s):=\int_0^s\frac{1}{v_p(\sigma)}\,d\sigma.
\end{equation*}
Since
$
z(1)=1,
$
and $z(s)$ is strictly increasing, it has an inverse, which we denote by
$
\chi_p(z)=s.
$
Then
$
\chi_p(z+1)=\chi_p(z)+1.
$
Moreover,
$
\chi_p'(z)=v_p(\chi_p(z))>0.
$
Also,
\begin{align*}
\chi_p''(z)
&=
v_p'(\chi_p(z))v_p(\chi_p(z)) \\
&=
\frac1{|p|^2}\left(c(p)\chi_p'(z)+F(\chi_p(z),p\chi_p'(z))\right).
\end{align*}
Thus $\chi_p$ is the desired corrector.

If $p=0$, then
\begin{equation*}
c(0)\chi_0'+F(\chi_0,0)=0.
\end{equation*}
Pick
\begin{equation*}
\mu(0)=\left(\int_0^1\frac{1}{F(s,0)}\,ds\right)^{-1},
\qquad
\mu(0)=-c(0).
\end{equation*}
Define
\begin{equation*}
z(s)=\mu(0)\int_0^s\frac{1}{F(\sigma,0)}\,d\sigma.
\end{equation*}
Then $z(1)=1$. Let
$
s=\chi_0(z).
$
Then
$
\chi_0'(z)=\frac{F(\chi_0(z),0)}{\mu(0)},
$
and $\chi_0$ is the corrector.

We have therefore proved the following result.

\begin{result}[\textbf{Cell ODE and effective speed}]
Assume that $F=F(s,p)$ satisfies \textup{(F1)--(F4)}. Then, for every $p\in\mathbb R^N$, there exists a unique constant $c(p)\in\mathbb R$ such that the cell ODE
\begin{equation*}
|p|^2\chi''(z)=c(p)\chi'(z)+F(\chi(z),p\chi'(z))
\end{equation*}
admits a solution $\chi=\chi_p\in C^2(\mathbb R)$ satisfying
\begin{equation*}
\chi_p(z+1)=\chi_p(z)+1,\qquad \chi_p'(z)>0.
\end{equation*}
The corrector $\chi_p$ is unique up to translations in $z$.
\end{result}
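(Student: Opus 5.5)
Existence is already supplied by the construction preceding the statement. For $p\neq0$, Brouwer's theorem gives a positive $1$-periodic solution $v_\mu$ of $v'=\Theta_\mu(s,v)$ for each $\mu>F_\infty(p)$. Proposition~\ref{monotonicity} then gives a unique $\mu(p)$ with $T_p(\mu(p))=1$, and inverting $z(s)=\int_0^s v_p^{-1}$ produces $\chi_p$. For $p=0$ the explicit formula is used. What remains is uniqueness, and the plan is to reduce it to the uniqueness statements already proved for the periodic ODE.

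Fix $p\neq0$ and suppose $(\chi,c)$ is any solution with $\chi\in C^2$, $\chi(z+1)=\chi(z)+1$ and $\chi'>0$. Since $\chi$ is a strictly increasing bijection of $\mathbb R$, we may define $v:=\chi'\circ\chi^{-1}$. The identity $\chi(z+1)=\chi(z)+1$ gives $\chi'(z+1)=\chi'(z)$, so $v$ is $1$-periodic. The chain-rule computation of \S2.2 shows that $v$ is a positive $C^1$ periodic solution of $v'=\Theta_\mu(s,v)$ with $\mu=-c$. Changing variables $s=\chi(z)$ over one period yields $\int_0^1 v(s)^{-1}\,ds=\int_0^1 dz=1$.

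The main step is to show that $\mu>F_\infty(p)$, so that $v$ lies in the range covered by Proposition~\ref{monotonicity}. Suppose instead $\mu\le F_\infty(p)$. Evaluate the ODE at a maximum point $s_1$ of $v$, where $v'(s_1)=0$, to get $\mu v(s_1)=F(s_1,pv(s_1))$. By (F2)--(F3), $F(s,pR)/R>F_\infty(p)$ for every $R>0$, so $\mu=F(s_1,pv(s_1))/v(s_1)>F_\infty(p)$, a contradiction.

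With $\mu>F_\infty(p)$ established, the uniqueness argument already given (non-crossing together with strict decrease of $\Theta_\mu$ in $v$) shows $v=v_\mu$. Then $T_p(\mu)=1$, and strict monotonicity of $T_p$ forces $\mu=\mu(p)$, which gives uniqueness of $c(p)$. Both $\chi$ and $\chi_p$ solve $\chi'=v_p(\chi)$, an autonomous first-order ODE with Lipschitz right-hand side. Choosing $z_0$ with $\chi(z_0)=0$, uniqueness for this ODE gives $\chi(z)=\chi_p(z-z_0)$, so the corrector is unique up to translation.

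For $p=0$ the equation reads $c\chi'+F(\chi,0)=0$. Since $F>0$ and $\chi'>0$, we must have $c<0$ and $\chi'=F(\chi,0)/(-c)$. Integrating $d\chi/F(\chi,0)=dz/(-c)$ over one period forces $-c=\bigl(\int_0^1 F(s,0)^{-1}ds\bigr)^{-1}=\mu(0)$. The same autonomous-ODE uniqueness then gives uniqueness of $\chi_0$ up to translation.

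The only real obstacle is the a priori bound $\mu>F_\infty(p)$ for an arbitrary solution, which the max-point argument above handles. The rest reduces to results already proved.
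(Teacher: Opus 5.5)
Your proposal is correct and follows the paper's route: pass to $v=\chi'\circ\chi^{-1}$, use the strict decrease of $\Theta_\mu$ in $v$ to get uniqueness of the periodic solution, use the strict monotonicity of $T_p$ to pin down $\mu(p)$, and use the explicit formula at $p=0$. Two of your steps make explicit what the paper's construction leaves implicit: the critical-point argument showing that every solution has $-c>F_\infty(p)$, and the reduction of uniqueness up to translation to the autonomous ODE $\chi'=v_p(\chi)$.
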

We fix the translation by imposing the normalization $\chi_p(0)=0$.

We next prove the continuity of the cell speed. Continuity away from $p=0$ follows from the standard continuous dependence of ODE solutions on parameters, whereas the case $p=0$ requires a separate compactness argument.
\begin{proposition}
 $p\to c(p)$ is continuous on $\mathbb R^N$.\label{continuity}
\end{proposition}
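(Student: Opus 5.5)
The proof splits into $p_0\neq0$ and $p_0=0$. Throughout, $\mu(p)=-c(p)$ denotes the unique root of $T_p(\mu)=1$ from Proposition \ref{monotonicity}, and $v_{p,\mu}$ denotes the unique positive $1$-periodic solution of $|p|^2vv'=-\mu v+F(s,pv)$.

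\emph{Case $p_0\neq0$.} First I would show that $(p,\mu)\mapsto T_p(\mu)$ is jointly continuous on $\{p\neq0,\ \mu>F_\infty(p)\}$.

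The invariant interval $[m,M]$ built before Proposition \ref{monotonicity} can be chosen uniformly for $(p,\mu)$ in a small neighbourhood of $(p_0,\mu_0)$. This uses that the limit in (F3) is locally uniform in $p$ and uniform in $s$, together with (F1). Hence the periodic solutions $v_{p,\mu}$ stay in a fixed compact interval there.

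Continuous dependence of ODE solutions on parameters then shows the following. Any limit of $v_{p_k,\mu_k}$ is a positive periodic solution for $(p_0,\mu_0)$. By the uniqueness argument above it equals $v_{p_0,\mu_0}$, so $v_{p,\mu}\to v_{p_0,\mu_0}$ uniformly and $T_p(\mu)\to T_{p_0}(\mu_0)$.

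Next I fix $\delta>0$ with $\mu(p_0)-\delta>F_\infty(p_0)$. Strict monotonicity gives
\[
T_{p_0}(\mu(p_0)-\delta)<1<T_{p_0}(\mu(p_0)+\delta).
\]
By joint continuity and continuity of $F_\infty$, both strict inequalities persist for $p$ near $p_0$. Hence $|\mu(p)-\mu(p_0)|<\delta$ for such $p$, which proves continuity at $p_0$.

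\emph{Case $p_0=0$ (the main obstacle).} Take $p_n\to0$ with $p_n\neq0$, set $\mu_n=\mu(p_n)$ and $v_n=v_{p_n,\mu_n}$. The plan is to obtain uniform bounds, then strong convergence, then to identify the limit.

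\emph{Uniform bounds.} The normalization $\int_0^1 v_n^{-1}=1$ forces $\min v_n\le1\le\max v_n$.
\begin{itemize}
\item At a maximum point $s_1$ we have $v_n'(s_1)=0$, and (F4) gives $\mu_n\max v_n\le \|F(\cdot,0)\|_\infty+L|p_n|\max v_n$. Using $\max v_n\ge1$, this yields $\mu_n\le\|F(\cdot,0)\|_\infty+L|p_n|$.
\item At a minimum point, (F1) gives $\mu_n\min v_n\ge c_0$. Using $\min v_n\le1$, this yields $\mu_n\ge c_0$, and also $\min v_n\ge c_0/\sup_k\mu_k$.
\item Returning to the maximum point with $\mu_n\ge c_0$, for $n$ large we get $\max v_n\le \|F(\cdot,0)\|_\infty/(c_0-L|p_n|)$.
\end{itemize}
So $0<a\le v_n\le b$ and $\mu_n\in[c_0,C]$ uniformly in $n$.

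\emph{Energy estimate.} Set $g_n:=\mu_nv_n-F(s,p_nv_n)$, so the ODE reads $|p_n|^2v_nv_n'=-g_n$. Multiply by $v_n'$ and integrate over a period. The term $\int\mu_nv_nv_n'$ vanishes by periodicity. For the other term write $F(s,p_nv_n)v_n'=\frac{d}{ds}G_n(s,v_n)-\partial_sG_n(s,v_n)$, where $G_n(s,v)=\int_0^vF(s,p_nr)\,dr$. This gives
\[
|p_n|^2\int_0^1 v_n(v_n')^2\,ds=-\int_0^1\partial_sG_n(s,v_n)\,ds\le C,
\]
using the Lipschitz bound on $F$ in $s$ and $v_n\le b$.

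Therefore $|p_n|^2\|v_n'\|_{L^2}^2\le C/a$. It follows that $\|g_n\|_{L^2}\le |p_n|^2 b\|v_n'\|_{L^2}\le C|p_n|\to0$. Since $F(s,p_nv_n)\to F(s,0)$ uniformly, we obtain $\mu_nv_n\to F(\cdot,0)$ in $L^2$.

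\emph{Identifying the limit.} Along a subsequence $\mu_n\to\bar\mu\ge c_0$, so $v_n\to F(\cdot,0)/\bar\mu$ in $L^2$. Because $v_n\ge a>0$, also $1/v_n\to\bar\mu/F(\cdot,0)$ in $L^2$. Passing to the limit in the normalization gives $\bar\mu\int_0^1F(s,0)^{-1}ds=1$, i.e.\ $\bar\mu=\mu(0)$. The limit is independent of the subsequence, so $\mu(p_n)\to\mu(0)$ and $c$ is continuous at $0$.

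The delicate point is the strong convergence. Weak-$*$ convergence of $v_n$ would not pass through $1/v_n$, and the energy identity above is what supplies the needed compactness.
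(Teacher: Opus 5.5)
Your proof is correct. For $p_0\neq0$ it is the paper's argument: convergence of $T_p(\mu)$ at fixed $\mu$ combined with strict monotonicity of $T_p$. You also supply details the paper leaves implicit, namely the uniform invariant interval near $(p_0,\mu_0)$ and the identification of limits via uniqueness of the positive periodic solution. At $p_0=0$ your uniform bounds match the paper's (your lower bound, read off directly from (F1) at a minimum point, is slightly simpler).

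The compactness step at $p_0=0$ genuinely differs. The paper never needs strong convergence. It proceeds as follows:
\begin{itemize}
\item It divides the ODE by $v_n$, obtaining $|p_n|^2v_n'=-\mu_n+a_nF(s,p_nv_n)$ with $a_n=1/v_n$.
\item Using $a_nv_n=1$ and the Lipschitz bound in $p$, it replaces $a_nF(s,p_nv_n)$ by $a_nF(s,0)$ at cost $L|p_n|$.
\item It removes the left-hand side by integrating by parts against a periodic test function.
\item It takes only a weak-$*$ limit $a_n\rightharpoonup a_*$ in $L^\infty$.
\end{itemize}
Both the limit relation $a_*F(s,0)=\mu_*$ and the normalization $\int_0^1a_n=1$ are linear in $a_n$, so weak-$*$ convergence suffices. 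Your remark that weak convergence cannot pass through $1/v_n$ is true for a weak limit of $v_n$. The paper sidesteps it by taking the weak limit of $1/v_n$ itself, which makes your energy identity unnecessary for this statement.

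Your route costs a bit more: it uses Lipschitz dependence of $F$ on $s$ and the chain rule through $G_n$, both available from (F4). In return it gives more: strong $L^2$ convergence $v_n\to F(\cdot,0)/\mu(0)$, with the quantitative bound $\|\mu_nv_n-F(\cdot,p_nv_n)\|_{L^2}=O(|p_n|)$. This is a weaker precursor of the estimate $\|v_p-V_p\|_{L^\infty}\le C|p|^2$ that the paper proves later for the uniform corrector bounds, where the present continuity result is used as an input.
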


\begin{proof}
 Suppose that
$
p_n\to p.
$
If $p\ne0$, using the continuity of $F_\infty$ and the continuous dependence of ODE solutions on parameters, we have
$
v_{p_n,\mu}\to v_{p,\mu}
$
uniformly for fixed
$
\mu>F_\infty(p).
$
Thus
$
T_{p_n}(\mu)\to T_p(\mu).
$
Since $T_p(\mu)$ is strictly increasing,
$
\mu(p_n)\to\mu(p).
$
Therefore
$$
c(p_n)\to c(p)
$$
as $n\to\infty$.

It remains to prove the continuity at $p=0$. Let
$
p_n\to0.
$
We may assume that $p_n\ne0$ for every $n$. Set
$$
\mu_n:=\mu(p_n),
\qquad
v_n:=v_{p_n},
\qquad
a_n(s):=\frac{1}{v_n(s)}.
$$
Then
\begin{equation*}
\int_0^1 a_n(s)\,ds=1.
\end{equation*}
We first establish uniform upper and lower bounds for $v_n$.

Let
$
A:=\max_{s\in[0,1]}F(s,0),
$ 
$
a:=\min_{s\in[0,1]}F(s,0)>0.
$
Since
$
\int_0^1\frac{1}{v_n(s)}\,ds=1,
$
we have
$
\min_s v_n(s)\le1\le\max_s v_n(s).
$
Let $s_n^M$ be a maximum point of $v_n$. Then $v_n'(s_n^M)=0$, and the equation for $v_n$ gives
\begin{equation*}
\mu_n v_n(s_n^M)=F(s_n^M,p_nv_n(s_n^M)).
\end{equation*}
Using the Lipschitz continuity of $F$ in the $p$-variable, we obtain
\begin{equation*}
F(s_n^M,p_nv_n(s_n^M))
\le
A+L|p_n|v_n(s_n^M).
\end{equation*}
Moreover, since $v_n(s_n^M)\ge1$ and $F\ge c_0$, we have
\begin{equation*}
\mu_n
=
\frac{F(s_n^M,p_nv_n(s_n^M))}{v_n(s_n^M)}
\ge
\frac{c_0}{v_n(s_n^M)}.
\end{equation*}
A simpler lower bound follows from a minimum point. Let $s_n^m$ be a minimum point of $v_n$. Then $v_n'(s_n^m)=0$ and
\begin{equation*}
\mu_n v_n(s_n^m)=F(s_n^m,p_nv_n(s_n^m)).
\end{equation*}
Since $v_n(s_n^m)\le1$ and $F\ge c_0$, we get $\mu_n\ge c_0.$

On the other hand, from the maximum point identity and the estimate above,
\begin{equation*}
\mu_n v_n(s_n^M)
\le
A+L|p_n|v_n(s_n^M).
\end{equation*}
Hence, for $n$ large enough,
\begin{equation*}
(\mu_n-L|p_n|)v_n(s_n^M)\le A.
\end{equation*}
Since $\mu_n\ge c_0$, choosing $n$ large so that $L|p_n|\le c_0/2$, we obtain
\begin{equation}
v_n(s)\le v_n(s_n^M)\le \frac{2A}{c_0}
\quad\text{for all }s. \label{upper bound}
\end{equation}
Similarly, using the minimum point identity,
\begin{equation*}
\mu_n v_n(s_n^m)
=
F(s_n^m,p_nv_n(s_n^m))
\ge
a-L|p_n|v_n(s_n^m).
\end{equation*}
Thus
\begin{equation*}
(\mu_n+L|p_n|)v_n(s_n^m)\ge a.
\end{equation*}
Since $\mu_n\le A+L|p_n|$, we get, for $n$ large,
\begin{equation}
v_n(s)\ge v_n(s_n^m)\ge C^{-1}
\quad\text{for all }s, \label{lower bound}
\end{equation}
where $C>0$ is independent of $n$. Therefore both $v_n$ and $a_n=1/v_n$ are uniformly bounded in $L^\infty(0,1)$.

By passing to a subsequence, we may assume that
$
\mu_n\to \mu_*
$
and
$$
a_n \rightharpoonup a_*
\quad\text{weakly-* in }L^\infty(0,1).
$$
We claim that
\begin{equation}
a_*(s)F(s,0)=\mu_*
\quad\text{a.e. in }(0,1).
\end{equation}

Indeed, dividing the ODE
$$
|p_n|^2 v_n(s)v_n'(s)
=
-\mu_n v_n(s)+F(s,p_nv_n(s))
$$
by $v_n(s)$ gives
\begin{equation*}
|p_n|^2v_n'(s)
=
-\mu_n+\frac{F(s,p_nv_n(s))}{v_n(s)}
=
-\mu_n+a_n(s)F(s,p_nv_n(s)).
\end{equation*}
Let $\varphi\in C^1_{\rm per}([0,1])$. Multiplying by $\varphi$ and integrating by parts, we get
\begin{align*}
&\int_0^1
\left(
-\mu_n+a_n(s)F(s,p_nv_n(s))
\right)
\varphi(s)\,ds
\\
&\qquad
=
|p_n|^2\int_0^1 v_n'(s)\varphi(s)\,ds
=
-|p_n|^2\int_0^1 v_n(s)\varphi'(s)\,ds.
\end{align*}
Since $v_n$ is uniformly bounded and $p_n\to0$, the right-hand side tends to $0$. Moreover,
\begin{equation*}
\left|
a_n(s)F(s,p_nv_n(s))-a_n(s)F(s,0)
\right|
\le
L|p_n|
\end{equation*}
because $a_n(s)v_n(s)=1$. Hence
\begin{equation*}
\int_0^1
\left(
-\mu_n+a_n(s)F(s,0)
\right)
\varphi(s)\,ds
\to0.
\end{equation*}
Letting $n\to\infty$, we obtain
\begin{equation*}
\int_0^1
\left(
-\mu_*+a_*(s)F(s,0)
\right)
\varphi(s)\,ds
=0
\end{equation*}
for every $\varphi\in C^1_{\rm per}([0,1])$. Therefore $a_*(s)F(s,0)=\mu_*$ a.e.

On the other hand, since
$
\int_0^1 a_n(s)\,ds=1,
$
we also have $\int_0^1 a_*(s)\,ds=1.$
Using
$
a_*(s)=\frac{\mu_*}{F(s,0)},
$
we get
\begin{equation*}
1
=
\mu_*\int_0^1\frac{1}{F(s,0)}\,ds.
\end{equation*}
Thus
\begin{equation*}
\mu_*
=
\left(
\int_0^1\frac{1}{F(s,0)}\,ds
\right)^{-1}
=
\mu(0).
\end{equation*}
Since every convergent subsequence of $\mu(p_n)$ has the same limit $\mu(0)$, we conclude that $\mu(p_n)\to\mu(0)
\text{ as }p_n\to0.$
Therefore
\begin{equation*}
c(p_n)=-\mu(p_n)\to-\mu(0)=c(0).
\end{equation*}
This proves the continuity of $c(p)$ at $p=0$.
\end{proof}
We end this subsection with uniform estimates for the correctors.
\begin{proposition}
 Suppose that $K$ is a compact subset of $\mathbb R^N$. Then there exist constants
$$
0<m_K\le M_K<\infty,
\qquad
C_K<\infty,
$$
such that
\begin{equation*}
m_K\le \chi_p'(z)\le M_K
\end{equation*}
for each $z\in\mathbb R$, and
\begin{equation*}
\|\chi_p-z\|_{C^2(\mathbb T)}\le C_K.
\end{equation*}\label{Bound}
\end{proposition}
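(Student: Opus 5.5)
The plan is to work with the periodic profile $v_p(s)=\chi_p'(z)$, where $s=\chi_p(z)$, rather than with $\chi_p$ directly. The ODE satisfied by $v_p$ yields the first-derivative bounds. We then bootstrap through the cell ODE to control $\chi_p''$, and finally handle $\chi_p-z$ via periodicity.

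\textbf{Step 1 (bounds on $\mu(p)$).} We show that $\mu(p)$ is bounded above and below on $K$. By Proposition \ref{continuity}, $c=-\mu$ is continuous on $\mathbb R^N$, so $\mu$ is bounded on the compact set $K$, say $\mu(p)\le \bar\mu_K$. For a lower bound, we use the normalization $\int_0^1 v_p^{-1}\,ds=1$, which forces $\min v_p\le 1$. At a minimum point $s^m$ of $v_p$ we have $v_p'(s^m)=0$, hence $\mu(p)v_p(s^m)=F(s^m,pv_p(s^m))\ge c_0$. This gives $\mu(p)\ge c_0$ for all $p$, including $p=0$ where $\mu(0)\ge \min F(\cdot,0)\ge c_0$.

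\textbf{Step 2 (bounds on $\chi_p'$).} We mimic the argument in the proof of Proposition \ref{continuity}, now uniformly in $p\in K$. At a maximum point $s^M$ of $v_p$, the identity $\mu v_p(s^M)=F(s^M,pv_p(s^M))$ only gives
\[
(\mu(p)-L|p|)\,v_p(s^M)\le A,
\]
and $\mu(p)-L|p|$ need not be positive for large $|p|$. I would therefore use (F2) instead. The function $\rho\mapsto F(s,\rho p)/\rho$ is strictly decreasing, and the identity says $F(s^M,pv)/v=\mu(p)$ at $v=v_p(s^M)$. Moreover $\mu(p)>F_\infty(p)$, and $\mu(p)-F_\infty(p)$ is continuous and positive on $K$, hence bounded below by some $\delta_K>0$. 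By (F3), uniformly in $s$ and locally uniformly in $p$, there is $R_K$ with
\[
\frac{F(s,\rho p)}{\rho}<F_\infty(p)+\delta_K \quad\text{for } \rho\ge R_K,\ p\in K .
\]
Hence $v_p(s^M)\le R_K$, which gives $M_K:=\max(R_K,1)$. Similarly, at the minimum point, $F(s^m,pv)/v=\mu(p)\le \bar\mu_K$ together with $F\ge c_0$ gives $v_p(s^m)\ge c_0/\bar\mu_K=:m_K$. This step is the main obstacle: it requires uniformity of $\delta_K$, that is, continuity of $F_\infty$ and of $\mu$, and uniformity of the limit in (F3) on $K$. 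All of these are provided by the hypotheses.

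\textbf{Step 3 ($C^2$ bound).} The cell ODE gives $|p|^2\chi_p''=c\chi_p'+F(\chi_p,p\chi_p')$, but dividing by $|p|^2$ degenerates near $p=0$. Instead, I write
\[
\chi_p''(z)=v_p'(s)\,v_p(s), \qquad |p|^2 v_p' = -\mu+\frac{F(s,pv_p)}{v_p},
\]
and estimate $v_p'$ directly for small $|p|$ by a maximum principle on $g=v_p'$. Differentiating the $v$-equation, at an extremum of $v_p'$ the term $|p|^2 v_p'^2$ has a sign. This yields a bound on $|v_p'|$ by $C\,|\partial_s F|+C|p|\,|D_pF|\,|v_p'|$ over $|p|^2$-free quantities. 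I expect that analysis to give $|v_p'|\le C_K$ uniformly, since for $p\to0$ the function $v_p$ tends to $F(s,0)/\mu(0)$, which is smooth. Away from $0$, that is for $|p|\ge r$, the bound follows directly from the ODE and Step 2.

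\textbf{Step 4 (conclusion).} The function $\chi_p(z)-z$ is $1$-periodic with $\chi_p(0)=0$, so $|\chi_p(z)-z|\le M_K+1$ on $[0,1]$. Its first derivative is bounded by $M_K+1$, and its second derivative is bounded by Step 3. Together these give $\|\chi_p-z\|_{C^2(\mathbb T)}\le C_K$.
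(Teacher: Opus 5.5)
\textbf{Verdict: there is a genuine gap, in Step 3.}

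Steps 1, 2 and 4 are sound. Your Step 2 bounds $v_p(s^M)$ by $R_K$ using (F3) and the uniform gap $\mu(p)-F_\infty(p)\ge\delta_K>0$ on $K$. This gap is positive also at $p=0$, since $F_\infty(0)=0<c_0\le\mu(0)$. The argument treats all of $K$ at once and is cleaner than the paper's split into $|p|\ge\delta_0$ and $|p|\le\delta_0$.

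Step 3 carries the real content of the proposition: a uniform bound on $v_p'$, equivalently on $\chi_p''$, for small $p\neq0$. Differentiate $|p|^2v_pv_p'=-\mu v_p+F(s,pv_p)$ and evaluate at a critical point of $g=v_p'$. This gives
\[
|p|^2g^2+\bigl(\mu(p)-D_pF(s,pv_p)\cdot p\bigr)g=F_s(s,pv_p).
\]
At a maximum with $g>0$ the quadratic term has the favorable sign. It yields $g\le 2\|F_s\|_\infty/c_0$ once $L|p|\le c_0/2$. At a minimum with $g<0$ it has the wrong sign. With $h=-g$ the relation reads $|p|^2h^2-(\mu-D_pF\cdot p)h-F_s=0$. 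This has one bounded root and one root of size roughly $\mu/|p|^2$. So the maximum principle by itself does not rule out $v_p'\approx-\mu/|p|^2$. Your heuristic $v_p\to F(\cdot,0)/\mu(0)$ does not help either, because uniform convergence gives no control of derivatives.

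What is missing is extra information that excludes the large root. A short fix within your approach: the ODE and Step 2 give
\[
v_p'=\bigl(-\mu+F(s,pv_p)/v_p\bigr)/|p|^2\ge-(\mu-c_0/M_K)/|p|^2 .
\]
The large root, by contrast, is at least $(\mu-L|p|)/|p|^2-C$. These are incompatible for $|p|$ small, so $h$ must be the bounded root.

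The paper proceeds differently. It introduces the root $V_p(s)$ of $\mu(p)V=F(s,pV)$ and checks that $V_p$ and $V_p'$ are uniformly bounded. It then proves $\|v_p-V_p\|_{L^\infty}\le C|p|^2$ by the maximum principle applied to $v_p-V_p$, using $\partial_vG_p\le-\gamma_1$ for $G_p(s,v)=F(s,pv)-\mu(p)v$. Finally it reads off $|v_p'|=|G_p(s,v_p)-G_p(s,V_p)|/(|p|^2v_p)\le C$.

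Either argument closes Step 3. As written, yours stops at an expectation exactly where the one-sided maximum principle fails.
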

\begin{proof}
We first consider the case where $K$ is contained in $\mathbb R^N\setminus\{0\}$. In this case, the equation
\begin{equation*}
v'(s)=\frac{-\mu v(s)+F(s,pv(s))}{|p|^2v(s)}
\end{equation*}
is a regular periodic ODE with parameters $(p,\mu)$. Hence the desired estimates follow from the continuous dependence of ODE solutions on parameters and the compactness of $K$.

It remains to prove the estimates near $p=0$. Let
\begin{equation*}
\mu_0:=\mu(0)
=
\left(
\int_0^1\frac{1}{F(s,0)}\,ds
\right)^{-1}.
\end{equation*}
By Proposition \ref{continuity},
$
\mu(p)\to\mu_0
\text{ as }p\to0.
$
Hence, there exist $\delta_0>0$ and constants $0<\mu_-<\mu_+$ such that $\mu_-\le\mu(p)\le\mu_+$ for $|p|\le\delta_0$.

Let $p\ne0$ and $|p|\le\delta_0$. Write
$
v_p:=v_{\mu(p)}.
$
We first prove that $v_p$ is uniformly bounded from above and below. By (\ref{upper bound}) and (\ref{lower bound}), there exist constants $0<m_0\le M_0<\infty$ such that
\begin{equation*}
m_0\le v_p(s)\le M_0
\end{equation*}
for all $s\in\mathbb T$ and all $0<|p|\le\delta_0$.

Next, we prove a uniform bound for $v_p'$. For fixed $p$ and $s$, define $V_p(s)>0$ by the algebraic equation
\begin{equation}
\mu(p)V_p(s)=F(s,pV_p(s)).\label{algebraic}
\end{equation}
This positive solution is unique. Indeed,
$
v\to \frac{F(s,pv)}{v}
$
is strictly decreasing by \textup{(F2)}, tends to $+\infty$ as $v\to0^+$, and tends to $F_\infty(p)$ as $v\to+\infty$. Since $\mu(p)>F_\infty(p)$, the equation has a unique positive solution.

We claim that $V_p$ is uniformly bounded from above and below for $|p|$ small. By (\ref{algebraic}) and the Lipschitz continuity of $F$,
\begin{equation*}
\mu(p)V_p(s)
\le
A+L|p|V_p(s).
\end{equation*}
Since $\mu(p)\ge c_0$ for $|p|$ small, we obtain
\begin{equation}
V_p(s)\le C.
\end{equation}
Similarly,
\begin{equation}
\mu(p)V_p(s)
=
F(s,pV_p(s))
\ge
a-L|p|V_p(s),
\end{equation}
and since $\mu(p)$ is bounded from above, we obtain
\begin{equation*}
V_p(s)\ge C^{-1}.
\end{equation*}
Thus $V_p$ is uniformly bounded above and below. In particular,
$
pV_p(s)\to0
$
uniformly as $p\to0$.

We also have a uniform bound for $V_p'$. Differentiating (\ref{algebraic}) with respect to $s$, we obtain
\begin{equation*}
\left(\mu(p)-D_pF(s,pV_p(s))\cdot p\right)V_p'(s)
=
F_s(s,pV_p(s)).
\end{equation*}
Since $\mu(p)\to\mu_0>0$ and $D_pF$ is bounded, after decreasing $\delta_0$ if necessary,
\begin{equation*}
\mu(p)-D_pF(s,pV_p(s))\cdot p\ge \gamma_0>0.
\end{equation*}
Therefore
\begin{equation*}
\|V_p'\|_{L^\infty(\mathbb T)}\le C.
\end{equation*}

Set
\begin{equation*}
G_p(s,v):=F(s,pv)-\mu(p)v.
\end{equation*}
Then $G_p(s,V_p(s))=0.$
Moreover, for $v$ in a fixed compact interval containing the ranges of $v_p$ and $V_p$, we have
\begin{equation*}
\partial_vG_p(s,v)
=
D_pF(s,pv)\cdot p-\mu(p)
\le
-\gamma_1<0
\end{equation*}
for all $0<|p|\le\delta_0$, after decreasing $\delta_0$ if necessary.
We now show that
\begin{equation*}
\|v_p-V_p\|_{L^\infty(\mathbb T)}\le C|p|^2.
\end{equation*}
Let
$
h_p:=v_p-V_p.
$
Assume that $h_p$ has a positive maximum at $s_0$. Then
$
h_p'(s_0)=0,
$
and hence
$
v_p'(s_0)=V_p'(s_0).
$
Since $v_p$ satisfies
\begin{equation}
|p|^2v_p(s)v_p'(s)=G_p(s,v_p(s)), \label{G_p eqn}
\end{equation}
we get
\begin{equation*}
G_p(s_0,v_p(s_0))
=
|p|^2v_p(s_0)V_p'(s_0).
\end{equation*}
On the other hand, since $v_p(s_0)>V_p(s_0)$ and $\partial_vG_p\le-\gamma_1$,
\begin{equation*}
G_p(s_0,v_p(s_0))
\le
-\gamma_1 h_p(s_0).
\end{equation*}
Therefore
\begin{equation*}
\gamma_1 h_p(s_0)
\le
|p|^2 |v_p(s_0)|\,|V_p'(s_0)|
\le C|p|^2.
\end{equation*}
Hence
\begin{equation*}
\max_{\mathbb T}h_p\le C|p|^2.
\end{equation*}

Similarly, assume that $h_p$ has a negative minimum at $s_1$. Then
$
h_p'(s_1)=0,
$
and hence
$
v_p'(s_1)=V_p'(s_1).
$
Since $v_p(s_1)<V_p(s_1)$ and $\partial_vG_p\le-\gamma_1$, we have
\begin{equation*}
G_p(s_1,v_p(s_1))
\ge
\gamma_1\bigl(V_p(s_1)-v_p(s_1)\bigr)
=
-\gamma_1 h_p(s_1).
\end{equation*}
On the other hand,
\begin{equation*}
G_p(s_1,v_p(s_1))
=
|p|^2v_p(s_1)V_p'(s_1).
\end{equation*}
Thus
\begin{equation*}
-\gamma_1 h_p(s_1)
\le
|p|^2|v_p(s_1)|\,|V_p'(s_1)|
\le C|p|^2.
\end{equation*}
Therefore
\begin{equation*}
-\min_{\mathbb T}h_p\le C|p|^2.
\end{equation*}
Combining the two estimates, we obtain
\begin{equation*}
\|v_p-V_p\|_{L^\infty(\mathbb T)}\le C|p|^2.
\end{equation*}
Finally, using (\ref{G_p eqn}) and the mean value theorem,
\begin{equation*}
|G_p(s,v_p(s))|
=
|G_p(s,v_p(s))-G_p(s,V_p(s))|
\le
C|v_p(s)-V_p(s)|
\le C|p|^2.
\end{equation*}
Since $v_p\ge m_0>0$, we conclude that
\begin{equation*}
|v_p'(s)|\le C
\end{equation*}
uniformly for all $s\in\mathbb T$ and all $0<|p|\le\delta_0$.

Now recall that $\chi_p'(z)=v_p(\chi_p(z)).$
Therefore $m_0\le \chi_p'(z)\le M_0.$
Moreover, $\chi_p''(z)
=
v_p'(\chi_p(z))\chi_p'(z).$
The uniform bounds on $v_p'$ and $v_p$ give
\begin{equation*}
|\chi_p''(z)|\le C.
\end{equation*}
Since we choose the normalization
$
\chi_p(0)=0,
$
and since
$
\chi_p(z+1)=\chi_p(z)+1,
$
the function $\chi_p(z)-z$ is periodic. The bounds on $\chi_p'$ and $\chi_p''$ imply
\begin{equation*}
\|\chi_p-z\|_{C^2(\mathbb T)}\le C.
\end{equation*}

Combining the estimates near $p=0$ with the standard compactness argument on
$
K\cap\{|p|\ge\delta_0\},
$
we obtain constants
$
0<m_K\le M_K<\infty,
$
$
C_K<\infty,
$
such that
\begin{equation*}
m_K\le \chi_p'(z)\le M_K
\end{equation*}
and
\begin{equation*}
\|\chi_p-z\|_{C^2(\mathbb T)}\le C_K
\end{equation*}
for all $p\in K$ and $z\in\mathbb R$.
\end{proof}

\subsection{Homogenization for the unperturbed case}
Before proving the homogenization theorem, we recall Lemma~\ref{phase-shift contact lemma}. This phase-shift lemma will also be used in the perturbative case, and its underlying idea goes back to \cite{imbert2008homogenization}. In the unperturbed case, the test function $\Phi^\epsilon_s$ is independent of the fast spatial variable $y$.

We now prove the homogenization theorem for $H(y,s,p)=F(s,p)$.

\begin{result}
 Assume that $F(s,p)$ satisfies (F1)-(F4). Consider
\begin{equation}
\begin{cases}
u_t^\epsilon+F\left(\dfrac{u^\epsilon}{\epsilon},Du^\epsilon\right)
=
\epsilon\Delta u^\epsilon
&\text{in }\mathbb R^N\times(0,\infty),\\[2mm]
u^\epsilon(x,0)=u_0(x)
&\text{on }\mathbb R^N.
\end{cases}
\end{equation}
Suppose that
$$
u_0\in BUC(\mathbb R^N)\cap W^{1,\infty}(\mathbb R^N),
$$
and $F$ satisfies (F1)--(F4). Then
$$
u^\epsilon\to u
$$
locally uniformly, where $u$ is the viscosity solution of
\begin{equation}
\begin{cases}
u_t-c(Du)=0
&\text{in }\mathbb R^N\times(0,\infty),\\
u(x,0)=u_0(x)
&\text{on }\mathbb R^N.
\end{cases}
\end{equation}
\end{result}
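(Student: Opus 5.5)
We follow the half-relaxed limits method combined with Evans' perturbed test function argument, using the one-dimensional correctors $\chi_p$ constructed above. First, we use the comparison principle and the uniform bound lemma from the appendices. Since $F\ge c_0$ and $F$ is bounded on bounded gradients, the functions $u_0(x)-Mt$ and $u_0(x)+Mt$, after a standard mollification of $u_0$ at scale independent of $\epsilon$, serve as barriers. This gives $|u^\epsilon(x,t)-u_0(x)|\le \omega(t)$ uniformly in $\epsilon$, with $\omega(0^+)=0$. Spatial translation invariance of the equation, together with comparison, gives $\|Du^\epsilon\|_\infty\le L_0$. Since $F$ is independent of $y$, $u^\epsilon(\cdot+h,\cdot)$ is again a solution, so $|u^\epsilon(x+h,t)-u^\epsilon(x,t)|\le L_0|h|$. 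We then define the relaxed limits $\overline u=\limsup^*u^\epsilon$ and $\underline u=\liminf_*u^\epsilon$. These are finite, and by the barrier estimate they equal $u_0$ at $t=0$.

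The core step is to show that $\overline u$ is a subsolution of $u_t-c(Du)=0$; the supersolution property of $\underline u$ is symmetric. Suppose $\phi$ is smooth and $\overline u-\phi$ has a strict local maximum at $(x_0,t_0)$, $t_0>0$, with $p=D\phi(x_0,t_0)$. Assume by contradiction that $\phi_t(x_0,t_0)-c(p)=2\theta>0$. Following the phase-shift idea of \cite{imbert2008homogenization} (Lemma~\ref{phase-shift contact lemma}), we introduce for each shift $s\in\mathbb R$ the perturbed test function
\[
\Phi^\epsilon_s(x,t)=\phi(x_0,t_0)+\epsilon\,\chi_p\!\left(\frac{\phi(x,t)-\phi(x_0,t_0)}{\epsilon}+s\right)-\epsilon s .
\]
This is modeled on the formal solution $\epsilon\chi(\cdot/\epsilon)$, with the slow phase $p\cdot x+ct$ replaced by $\phi$. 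Since $\|\chi_p-z\|_{C^0}\le C_K$ by Proposition~\ref{Bound}, we have $\Phi^\epsilon_s=\phi+O(\epsilon)$ uniformly in $s$. The phase-shift lemma should supply $s_\epsilon$ and points $(x_\epsilon,t_\epsilon)\to(x_0,t_0)$ at which $u^\epsilon-\Phi^\epsilon_{s_\epsilon}$ attains a local maximum and, crucially, $u^\epsilon(x_\epsilon,t_\epsilon)=\Phi^\epsilon_{s_\epsilon}(x_\epsilon,t_\epsilon)$. The shift is chosen so that the graphs touch, which makes $u^\epsilon/\epsilon=\Phi^\epsilon_{s_\epsilon}/\epsilon$ at the contact point.

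At the contact point, set $z=(\phi-\phi(x_0,t_0))/\epsilon+s_\epsilon$. Then $\Phi_t=\chi_p'(z)\phi_t$ and $D\Phi=\chi_p'(z)D\phi$, and
\[
\epsilon\Delta\Phi=\chi_p''(z)|D\phi|^2+\epsilon\chi_p'(z)\Delta\phi .
\]
Also $u^\epsilon/\epsilon=\chi_p(z)+\mathrm{const}$, where the constant is the integer-periodic offset that must be arranged through the choice of $s$ and the normalization. Using periodicity of $F$ in $s$, the subsolution inequality for $u^\epsilon$ gives
\[
\chi_p'\phi_t+F(\chi_p,\chi_p'D\phi)\le \chi_p''|D\phi|^2+O(\epsilon).
\]
Replacing $D\phi(x_\epsilon,t_\epsilon)$ by $p$ costs $o(1)$, using Lipschitz continuity of $F$ and the uniform bounds $m_K\le\chi_p'\le M_K$, $|\chi_p''|\le C_K$ from Proposition~\ref{Bound}. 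Subtracting the cell ODE $|p|^2\chi_p''=c(p)\chi_p'+F(\chi_p,p\chi_p')$ yields $\chi_p'(z)\bigl(\phi_t-c(p)\bigr)\le o(1)$. Since $\chi_p'\ge m_K>0$, this contradicts $\phi_t-c(p)=2\theta$.

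Finally, the Hamiltonian $-c(\cdot)$ is continuous by Proposition~\ref{continuity}, so the limit equation has a comparison principle for bounded uniformly continuous data. Therefore $\overline u\le\underline u$, hence $\overline u=\underline u=u$, and local uniform convergence follows. The main obstacle is the contact step: a plain maximum of $u^\epsilon-\Phi^\epsilon$ does not guarantee $u^\epsilon=\Phi^\epsilon$ at the maximizer, and this equality is needed to evaluate $F(u^\epsilon/\epsilon,\cdot)$ at the corrector phase. I would handle it by using continuity and monotonicity in $s$ of $\Phi^\epsilon_s$ (valid because $\chi_p'>0$) to choose the minimal $s$ at which the localized graph of $\Phi^\epsilon_s$ touches $u^\epsilon$ from above. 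The strictness of the maximum of $\overline u-\phi$ keeps the contact point interior and close to $(x_0,t_0)$.
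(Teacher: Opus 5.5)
There is a genuine gap in exactly the step you flag as the main obstacle. Because of the subtracted $\epsilon s$, your family is
\[
\Phi^\epsilon_s=\phi+\epsilon\,w_p\Bigl(\frac{\phi-\phi(x_0,t_0)}{\epsilon}+s\Bigr),\qquad w_p:=\chi_p-z,
\]
which is $1$-periodic in $s$: the correction cancels the unit jump $\chi_p(z+1)=\chi_p(z)+1$. Hence $\partial_s\Phi^\epsilon_s=\epsilon(\chi_p'-1)$ changes sign, since $\int_0^1\chi_p'=1$, so the monotonicity in $s$ you invoke is false. Consequently $M_\epsilon(s)=\max_{\overline Q}(u^\epsilon-\Phi^\epsilon_s)$ is periodic in $s$ and need not vanish. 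For instance, it is positive for every $s$ as soon as $\max_{\overline Q}(u^\epsilon-\phi)>\epsilon\|w_p\|_\infty$. The phase-shift lemma does not apply either, since it rests on $\Phi^\epsilon_{s+1}=\Phi^\epsilon_s+\epsilon$.

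Moreover, at a contact point your phase would be $u^\epsilon/\epsilon=\chi_p(z)+\phi(x_0,t_0)/\epsilon-s_\epsilon$. To use periodicity of $F$ and the cell ODE you need this offset to lie in $\mathbb Z$. Imposing that collapses your family to the single function $\epsilon\chi_p(\phi/\epsilon)$, which in general does not touch $u^\epsilon$. So within your family, touching and phase matching cannot be achieved simultaneously, and the inequality $\chi_p'\phi_t+F(\chi_p,\chi_p'D\phi)\le\chi_p''|D\phi|^2+O(\epsilon)$ is not justified.

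The remedy is to drop both $\phi(x_0,t_0)$ and $-\epsilon s$ and take, as the paper does, $\Phi^\epsilon_s=\epsilon\chi_p(\phi/\epsilon+s)$. This is strictly increasing in $s$ and satisfies $\Phi^\epsilon_{s+1}=\Phi^\epsilon_s+\epsilon$. So $M_\epsilon(s+1)=M_\epsilon(s)-\epsilon$, and the intermediate value theorem gives $s_\epsilon$ with $M_\epsilon(s_\epsilon)=0$. Since vertical and phase shifts are now the same operation, contact automatically yields $u^\epsilon/\epsilon=\chi_p(z_\epsilon)$ exactly, with no offset to arrange. From $\Phi^\epsilon_s=\phi+\epsilon s+O(\epsilon)$ one gets $\epsilon s_\epsilon\to0$, and the strict maximum sends the contact points to $(x_0,t_0)$. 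After that, your computation coincides with the paper's proof.

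A secondary error: $\|Du^\epsilon\|_\infty\le L_0$ does not follow from translation invariance, because $u^\epsilon+L_0|h|$ is not a solution. The equation sees $u/\epsilon$, so only shifts in $\epsilon\mathbb Z$ preserve solutions, and you only get $u^\epsilon(x+h,t)\le u^\epsilon(x,t)+\epsilon\lceil L_0|h|/\epsilon\rceil$. This still makes the relaxed limits $L_0$-Lipschitz in $x$. That is the convenient way to run comparison for $u_t-c(Du)=0$, with $c$ merely continuous, between the usc $\overline u$ and the lsc $\underline u$. These limits are not a priori BUC, so ``comparison for BUC data'' is not quite the statement you need. Finally, the uniform-in-$K$ bounds of Proposition~\ref{Bound} are not needed here, since $p$ is fixed in each viscosity test.
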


\medskip
\begin{proof}
 By Lemma~\ref{lem:initial-trace}, the family $\{u^\epsilon\}$ is locally uniformly bounded on finite time intervals, and the half-relaxed limits satisfy the initial condition.
 Define
$$
u^*:={\limsup}_{\epsilon\to0}^{*}u^\epsilon,
\qquad
u_*:={\liminf}_{\epsilon\to0,*}u^\epsilon.
$$
It suffices to show that $u^*$ is a viscosity subsolution. The proof that $u_*$ is the viscosity supersolution is similar.

Pick
$
\phi\in C^2((0,\infty)\times\mathbb R^N),
$
and suppose that $u^*-\phi$ attains its strict maximum at $(x_0,t_0)$, where $t_0>0$. Denote
$
a:=\phi_t(x_0,t_0),
$ $
p:=D\phi(x_0,t_0).
$
We only need to show
$
a\le c(p).
$

Pick the corrector $\chi_p$. Consider the perturbed test function
\begin{equation*}
\Phi_s^\epsilon(t,x)
=
\epsilon\chi_p\left(\frac{\phi(t,x)}{\epsilon}+s\right).
\end{equation*}
Then
$$
\Phi_{s+1}^\epsilon=\Phi_s^\epsilon+\epsilon.
$$
By Lemma \ref{phase-shift contact lemma}, applied to
$$
\chi(z)=\chi_p(z),
$$
there exist $s_\epsilon$ and maximum points $(x_\epsilon,t_\epsilon)\to(x_0,t_0)$ such that $M_\epsilon(s_\epsilon)=0,$
and $\frac{u^\epsilon(x_\epsilon,t_\epsilon)}{\epsilon}
=
\chi_p\left(
\frac{\phi(x_\epsilon,t_\epsilon)}{\epsilon}+s_\epsilon
\right).$
Let
$
z_\epsilon:=
\frac{\phi(x_\epsilon,t_\epsilon)}{\epsilon}+s_\epsilon,
$ $
z:=
\frac{\phi(x,t)}{\epsilon}+s_\epsilon.
$
Then
\begin{equation*}
\frac{u^\epsilon(x_\epsilon,t_\epsilon)}{\epsilon}
=
\frac{\Phi_{s_\epsilon}^\epsilon(x_\epsilon,t_\epsilon)}{\epsilon}
=
\chi_p(z_\epsilon).
\end{equation*}
Moreover,
$
(\Phi_{s_\epsilon}^\epsilon)_t
=
\chi_p'(z)\phi_t,
$ $
D\Phi_{s_\epsilon}^\epsilon
=
\chi_p'(z)D\phi,
$
and
$
\epsilon\Delta\Phi_{s_\epsilon}^\epsilon
=
\chi_p''(z)|D\phi|^2
+
\epsilon\chi_p'(z)\Delta\phi.
$

Suppose that $u^\epsilon-\Phi_{s_\epsilon}^\epsilon$ attains its maximum at $(x_\epsilon,t_\epsilon)$. The viscosity subsolution inequality gives
\begin{equation*}
0\ge
(\Phi_{s_\epsilon}^\epsilon)_t
+
F\left(\chi_p(z_\epsilon),D\Phi_{s_\epsilon}^\epsilon\right)
-
\epsilon\Delta\Phi_{s_\epsilon}^\epsilon.
\end{equation*}
By the cell ODE,
\begin{align*}
0\ge\;&
\chi_p'(z_\epsilon)\bigl(\phi_t-c(p)\bigr)
\\
&+
\left[
F\left(\chi_p(z_\epsilon),\chi_p'(z_\epsilon)D\phi\right)
-
F\left(\chi_p(z_\epsilon),\chi_p'(z_\epsilon)p\right)
\right]
\\
&-
\chi_p''(z_\epsilon)\left(|D\phi|^2-|p|^2\right)
-
\epsilon\chi_p'(z_\epsilon)\Delta\phi .
\end{align*}
Letting $\epsilon\to0$, we have
$
D\phi\to p,
$ $
\phi_t\to a.
$
Therefore
\begin{equation*}
\liminf_{\epsilon\to0}\chi_p'(z_\epsilon)(a-c(p))\le0.
\end{equation*}
Because
$
\inf_z \chi_p'(z)>0,
$
we obtain
$$
a\le c(p).
$$

Similarly, $u_*$ is the viscosity supersolution. With the comparison principle,
$$
u^*\le u_*.
$$
Since
$$
u_*\le u^*,
$$
we conclude that
$$
u:=u^*=u_*
$$
is the viscosity solution of the effective equation. This proves the local uniform convergence.
\end{proof}
The proof is based on Evans' perturbed test function method~\cite{Evans1989}.

\section{Homogenization for the perturbative case}

We now introduce a spatially oscillatory perturbation of the unperturbed case.

Consider
\begin{equation}
H(y,s,p)=F(s,p)+\eta W(y,s,p),
\end{equation}
where $\eta$ will be chosen sufficiently small on the relevant compact set of gradients. We assume that $F$ satisfies \textnormal{(F1)}--\textnormal{(F4)},
that
\[
W\in C_{\mathrm{loc}}^{3,\alpha}
(\mathbb T^N\times\mathbb T\times\mathbb R^N),
\]
and that there exists $L>0$ such that
\[
|W(x,r,p)-W(y,s,q)|
\le L\big((1+|p|+|q|)|x-y|+|r-s|+|p-q|\big)
\]
for all $x,y\in\mathbb R^N$, $r,s\in\mathbb R$, and $p,q\in\mathbb R^N$.

We write
$$
\chi_p^\eta:\mathbb R^N\times\mathbb R\to\mathbb R
$$
for the corrector associated with
$
H(y,s,p)=F(s,p)+\eta W(y,s,p).
$
The corresponding cell equation is
\begin{equation}
\begin{aligned}
c_\eta(p)(\chi_p^\eta)_z
&+
F\left(\chi_p^\eta,D_y\chi_p^\eta+p(\chi_p^\eta)_z\right)
\\
&+
\eta W\left(y,\chi_p^\eta,D_y\chi_p^\eta+p(\chi_p^\eta)_z\right)
\\
&=
\Delta_y\chi_p^\eta
+
2p\cdot D^2_{yz}\chi_p^\eta
+
|p|^2(\chi_p^\eta)_{zz}.
\end{aligned}\label{Cell}
\end{equation}
Set
$$
w_p^\eta(y,z):=\chi_p^\eta(y,z)-z.
$$
We seek a corrector of the form $\chi_p^\eta(y,z)=z+w_p^\eta(y,z)$ with $w_p^\eta$ periodic in $(y,z)$. This motivates the following definition.
\begin{definition}
We say that $\chi_p^\eta$ is a corrector associated with $p$ if
\[
\chi_p^\eta(y,z)=z+w_p^\eta(y,z),
\qquad
w_p^\eta\in C^2(\mathbb T^{N+1}),
\]
and there exists a constant $c_\eta(p)\in\mathbb R$ such that
\begin{equation}
\left\{
\begin{aligned}
&\chi_p^\eta(y,z+1)=\chi_p^\eta(y,z)+1,\qquad
(\chi_p^\eta)_z(y,z)>0,
\\[2mm]
&c_\eta(p)(\chi_p^\eta)_z
+
F\left(\chi_p^\eta,D_y\chi_p^\eta+p(\chi_p^\eta)_z\right)
\\
&\quad
+
\eta W\left(y,\chi_p^\eta,D_y\chi_p^\eta+p(\chi_p^\eta)_z\right)
\\
&\quad
=
\Delta_y\chi_p^\eta
+
2p\cdot D^2_{yz}\chi_p^\eta
+
|p|^2(\chi_p^\eta)_{zz}.
\end{aligned}
\right.
\end{equation}
The constant $c_\eta(p)$ is called the cell speed associated with $p$.
\end{definition}
Let $\alpha_0\in(0,1)$ denote the Hölder exponent in the regularity assumptions on $F$ and $W$. Throughout this section, we fix an exponent $\alpha\in(0,\alpha_0)$.
\subsection{Construction of a corrector}

Let
$$
w_p^0(z):=\chi_p^0(z)-z,
\qquad
w(y,z):=w_p^0(z)+\psi(y,z),
\qquad
c:=c_0(p)+\kappa.
$$
We define the nonlinear operator
$
\mathcal N:\mathbb R^N\times C^{2,\alpha}(\mathbb T^{N+1})\times\mathbb R\times\mathbb R
\to C^\alpha(\mathbb T^{N+1})
$
by
\begin{equation*}
\begin{aligned}
\mathcal N(p,\psi,\kappa,\eta)
:=&\;
(c_0+\kappa)(1+w_z)
+
F\left(z+w,D_yw+p(1+w_z)\right)
\\
&+
\eta W\left(y,z+w,D_yw+p(1+w_z)\right)
\\
&-
\Delta_yw
-
2p\cdot D^2_{yz}w
-
|p|^2w_{zz}.
\end{aligned}
\end{equation*}
We require
\begin{equation*}
\mathcal N(p,\psi,\kappa,\eta)=0,
\qquad
\int_{\mathbb T^{N+1}}\psi=0.
\end{equation*}
Clearly,
$
\mathcal N(p,0,0,0)=0.
$
Denote
\begin{equation*}
a_p(z):=(\chi_p^0)'(z)=1+(w_p^0)'(z)>0.
\end{equation*}
We linearize $\mathcal N$ at $(\psi,\kappa,\eta)=(0,0,0)$ in the variables $(\psi,\kappa)$.

For $\theta\in\mathbb R$, we have
\begin{align*}
\mathcal N(p,\theta\psi,\theta\kappa,0)
=&\;
c_0a_p
+
\theta(c_0\psi_z+\kappa a_p)
+
O(\theta^2)
+
F(\chi_p^0,pa_p)
\\
&+
\theta\left[
F_s(\chi_p^0,pa_p)\psi
+
D_pF(\chi_p^0,pa_p)\cdot(D_y\psi+p\psi_z)
\right]
+
O(\theta^2)
\\
&-
\theta\left(
\Delta_y\psi
+
2p\cdot D^2_{yz}\psi
+
|p|^2\psi_{zz}
\right)
-
|p|^2(w_p^0)''.
\end{align*}
Thus,
\begin{equation*}
\begin{aligned}
\left.\frac{d}{d\theta}\mathcal N(p,\theta\psi,\theta\kappa,0)\right|_{\theta=0}
=&\;
c_0(p)\psi_z
+
F_s(\chi_p^0,pa_p)\psi
\\
&+
D_pF(\chi_p^0,pa_p)\cdot(D_y\psi+p\psi_z)
\\
&-
\Delta_y\psi
-
2p\cdot D^2_{yz}\psi
-
|p|^2\psi_{zz}
+
\kappa a_p
\\
=:&\;
L_p\psi+\kappa a_p,
\end{aligned}
\end{equation*}
where $L_p$ is a linear operator.
Whenever necessary, we decrease the fixed Hölder exponent $\alpha$ without changing the notation.

For a function $\psi=\psi(y,z)$, we introduce the change of variables
$$
r=z-p\cdot y,
\qquad
\widetilde\psi(y,r):=\psi(y,p\cdot y+r).
$$
Then
$
\psi_z=\widetilde\psi_r,
$ $
D_y\psi+p\psi_z=D_y\widetilde\psi,
$
and
\begin{equation*}
\Delta_y\psi
+
2p\cdot D^2_{yz}\psi
+
|p|^2\psi_{zz}
=
\Delta_y\widetilde\psi.
\end{equation*}
Therefore, in the variables $(y,r)$, the linearized operator $L_p$ becomes the periodic-parabolic operator
\begin{equation}
\widetilde L_p\widetilde\psi
=
c_0(p)\widetilde\psi_r
+
F_s(\chi_p^0,p a_p)\widetilde\psi
+
D_pF(\chi_p^0,p a_p)\cdot D_y\widetilde\psi
-
\Delta_y\widetilde\psi, \label{periodic parabolic}
\end{equation}
where
$
a_p=(\chi_p^0)'.
$
Here $\chi_p^0$ and $a_p$ are evaluated at $p\cdot y+r$.

For fixed $p$, we define the skew torus $\mathbb T_p^{N+1}$ by the identifications
$$
(y,r)\sim(y+k,r-p\cdot k),
\qquad
(y,r)\sim(y,r+\ell),
\qquad
k\in\mathbb Z^N,\ \ell\in\mathbb Z.
$$
Equivalently, this is the usual periodicity of $\psi(y,z)$ in the variables $(y,z)$.

We say that
$$
\widetilde\psi\in C_{\rm per}^{2+\alpha,1+\alpha/2}(\mathbb T_p^{N+1})
$$
if $\widetilde\psi$ satisfies the above skew-periodicity and
$$
\widetilde\psi,\quad D_y\widetilde\psi,\quad D_y^2\widetilde\psi,\quad \widetilde\psi_r
$$
are continuous and skew-periodic, with $D_y^2\widetilde\psi$ and $\widetilde\psi_r$ Hölder continuous with respect to the parabolic distance
$$
d_{\rm par}\bigl((y,r),(y',r')\bigr)
=
|y-y'|+|r-r'|^{1/2}.
$$
The corresponding norm is
\begin{equation*}
\begin{aligned}
\|\widetilde\psi\|_{C^{2+\alpha,1+\alpha/2}}
:=
&\;
\|\widetilde\psi\|_{L^\infty}
+
\|D_y\widetilde\psi\|_{L^\infty}
+
\|D_y^2\widetilde\psi\|_{L^\infty}
+
\|\widetilde\psi_r\|_{L^\infty}
\\
&+
[D_y^2\widetilde\psi]_{\alpha,\alpha/2}
+
[\widetilde\psi_r]_{\alpha,\alpha/2}.
\end{aligned}
\end{equation*}
Here
\begin{equation*}
[f]_{\alpha,\alpha/2}
:=
\sup_{(y,r)\ne(y',r')}
\frac{|f(y,r)-f(y',r')|}
{|y-y'|^\alpha+|r-r'|^{\alpha/2}}.
\end{equation*}
We also define
\begin{equation*}
\mathcal X_p
:=
\left\{
\widetilde\psi\in
C_{\rm per}^{2+\alpha,1+\alpha/2}(\mathbb T_p^{N+1}):
\int_{\mathbb T_p^{N+1}}\widetilde\psi=0
\right\}
\times\mathbb R
\end{equation*}
and
\begin{equation*}
\mathcal Y_p
:=
C_{\rm per}^{\alpha,\alpha/2}(\mathbb T_p^{N+1}).
\end{equation*}
We define $\widetilde A_p:\mathcal X_p\to\mathcal Y_p$ 
by
\begin{equation*}
\widetilde A_p(\widetilde\psi,\kappa)
=
\widetilde L_p\widetilde\psi+\kappa a_p.
\end{equation*}
Before proving invertibility, we first record a lemma for parabolic equations on skew tori. For the periodic-parabolic Schauder estimates and Fredholm alternative used in the construction of the correctors, we refer to Lieberman~\cite{Lieberman1996}, Ladyzhenskaya--Solonnikov--Ural'tseva~\cite{LadyzhenskayaSolonnikovUraltseva1968}, and Hess~\cite{Hess1991}.

\begin{lemma}[\textbf{Uniform periodic-parabolic estimates on skew tori}]
Let $K\Subset\mathbb R^N$. For each $p\in K$, let $\mathbb T_p^{N+1}$ be the skew torus defined by the identifications
$$
(y,r)\sim(y+k,r-p\cdot k),
\qquad
(y,r)\sim(y,r+\ell),
\qquad
k\in\mathbb Z^N,\ \ell\in\mathbb Z.
$$
Consider the linear periodic-parabolic operator
\begin{equation*}
\mathcal P_p u
=
c_p u_r
+
b_p(y,r)\cdot D_yu
+
d_p(y,r)u
-
\Delta_yu
\end{equation*}
on $\mathbb T_p^{N+1}$. Assume that $|c_p|\ge c_K>0$
for all $p\in K$, and that
\begin{equation*}
\|b_p\|_{C^{\alpha,\alpha/2}(\mathbb T_p^{N+1})}
+
\|d_p\|_{C^{\alpha,\alpha/2}(\mathbb T_p^{N+1})}
\le C_K
\end{equation*}
uniformly for $p\in K$. Then there exists a constant $C_K>0$, independent of $p\in K$, such that every skew-periodic solution $u$ of
\begin{equation*}
\mathcal P_p u=f
\end{equation*}
satisfies
\begin{equation*}
\|u\|_{C^{2+\alpha,1+\alpha/2}(\mathbb T_p^{N+1})}
\le
C_K
\left(
\|f\|_{C^{\alpha,\alpha/2}(\mathbb T_p^{N+1})}
+
\|u\|_{L^\infty(\mathbb T_p^{N+1})}
\right).
\end{equation*}
Moreover, the periodic problem for $\mathcal P_p$ satisfies the Fredholm alternative with index $0$.\label{lem:uniform-skew-schauder}
\end{lemma}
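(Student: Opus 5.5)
The plan is to reduce the skew-periodic problem to a standard problem that is periodic in time and posed on the whole space, and then to apply classical interior parabolic Schauder theory. The key observation is that since $c_p\neq 0$, the operator $\mathcal P_p$ is uniformly parabolic in the "time" variable $r$ after dividing by $c_p$. A skew-periodic function on $\mathbb T_p^{N+1}$ lifts to a function $u(y,r)$ on $\mathbb R^N\times\mathbb R$. This function is $1$-periodic in $r$ and satisfies $u(y+k,r)=u(y,r+p\cdot k)$. Hence $u$ is bounded on $\mathbb R^{N+1}$, with the same sup norm as on the fundamental domain, and the same holds for $b_p$, $d_p$ and $f$. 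Write $\mathcal P_p u=f$ as
\[
u_r=c_p^{-1}\bigl(\Delta_y u-b_p\cdot D_yu-d_pu+f\bigr).
\]
If $c_p<0$, I reverse $r\mapsto -r$, which preserves skew-periodicity up to replacing $p$ by $-p$ (note $K\cup(-K)$ is still compact). The diffusion coefficient $|c_p|^{-1}$ lies in $(0,c_K^{-1}]$. It is also bounded below, because $c_p$ is bounded on $K$ (for the intended application $c_p=c_0(p)$, which is continuous by Proposition \ref{continuity}). I will add $\sup_K|c_p|<\infty$ as an implicit hypothesis if needed.

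\textbf{Step 1 (Schauder estimate).} Apply the interior parabolic Schauder estimate (Lieberman, Thm.~4.9, or LSU) on parabolic cylinders $Q_1(y_0,r_0)=B_1(y_0)\times(r_0-1,r_0]$ inside $Q_2$:
\[
\|u\|_{C^{2+\alpha,1+\alpha/2}(Q_1)}\le C\bigl(\|f\|_{C^{\alpha,\alpha/2}(Q_2)}+\|u\|_{L^\infty(Q_2)}\bigr).
\]
Here $C$ depends only on $N,\alpha$, on the ellipticity bounds $c_K^{-1}$ and $\sup_K|c_p|$, and on the Hölder norms of $b_p,d_p$. All of these are uniform in $p\in K$ by assumption. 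The torus $\mathbb T_p^{N+1}$ has a fundamental domain $[0,1]^N\times[0,1]$ for every $p$, which is covered by finitely many unit cylinders, their number independent of $p$. Hölder seminorms over the torus reduce to seminorms over such cylinders plus a trivial bound for pairs at distance $\ge1$. Taking the supremum over $(y_0,r_0)$ then gives the uniform estimate.

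\textbf{Step 2 (Fredholm alternative).} Pick $\lambda>\sup_K\|d_p\|_\infty$. The shifted operator $\mathcal P_p+\lambda$ satisfies a maximum principle on the compact manifold $\mathbb T_p^{N+1}$. At a positive maximum of $u$ we have $u_r=0$, $D_yu=0$ and $\Delta_yu\le0$, so $(\lambda+d_p)u\le f$, which gives $\|u\|_\infty\le\|f\|_\infty/(\lambda-\|d\|_\infty)$. Hence $\mathcal P_p+\lambda$ is injective. Surjectivity follows from the method of continuity, connecting it to $c_pu_r-\Delta_yu+\lambda u$. That model operator is invertible by Fourier series on the torus: the skew identification is just the lattice generated by $(k,-p\cdot k)$ and $(0,\ell)$, with symbol $ic_p\xi_r+|\xi_y|^2+\lambda\ne0$. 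Alternatively one can use the Poincaré (period) map of the parabolic flow as in Hess. The a priori estimate from Step 1 together with the maximum principle bound supplies the uniform bound needed for continuity. Then $\mathcal P_p=(\mathcal P_p+\lambda)-\lambda$, and $(\mathcal P_p+\lambda)^{-1}$ is compact from $C^{\alpha,\alpha/2}$ to itself by Arzelà--Ascoli, since $C^{2+\alpha,1+\alpha/2}$ embeds compactly in $C^{\alpha,\alpha/2}$. The Riesz--Schauder theory then gives that $\mathcal P_p$ is Fredholm of index $0$.

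\textbf{Main obstacle.} The delicate point is uniformity in $p$, in two places: the Schauder constant must not depend on the geometry of the skew torus, and the degenerate direction must be controlled. The first is handled because the estimate is purely local and the lift to $\mathbb R^{N+1}$ removes the dependence on the twist. The second is handled by $|c_p|\ge c_K$, which prevents degeneration of the time scaling.
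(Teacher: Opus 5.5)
Your Step 1 is the paper's argument. You lift to $\mathbb R^{N+1}$, apply interior Schauder estimates on ordinary parabolic cylinders, and use that the skew identifications are translations. Your remark that $[0,1]^{N+1}$ is a fundamental domain for every $p$ (or simply taking the supremum over all centers) replaces the paper's covering of parallelepipeds. The hypothesis $\sup_K|c_p|<\infty$ that you add is genuinely needed for estimates on unit cylinders. The paper uses it silently while claiming dependence on the lower bound only, and it holds in every application since $c_0$ and $c_\eta$ are bounded on $K$.

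The real difference is the Fredholm part. The paper argues that the solution operator over one period in $r$ is compact by parabolic smoothing, and reduces to $I-P_p$. You instead shift by $\lambda>\sup_K\|d_p\|_\infty$ and get injectivity from the maximum principle on the compact manifold $\mathbb T_p^{N+1}$. You get surjectivity by continuity from $c_p\partial_r-\Delta_y+\lambda$, whose symbol on the dual lattice $\{(m+np,n)\}$ has real part at least $\lambda$. You then write $\mathcal P_p=(\mathcal P_p+\lambda)\bigl(I-\lambda(\mathcal P_p+\lambda)^{-1}\bigr)$ with a compact resolvent. Your route needs only the compactness of $\mathbb T_p^{N+1}$ and Step 1, and it works for every $p$.

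The period-map route presupposes that the slices $\{r=\mathrm{const}\}$ close up into compact spatial tori. That happens only for $p\in\mathbb Q^N$, since $(y,r)\sim(y+k,r-p\cdot k)$ shifts $r$ by $p\cdot k$. Otherwise the Cauchy problem in $r$ is posed on $\mathbb R^N$, the time-one map is not compact, and $r$-periodicity does not encode skew-periodicity. So keep your resolvent argument and drop the ``alternatively, Hess's period map'' remark.
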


\begin{proof}
We regard functions on $\mathbb T_p^{N+1}$ as functions on $\mathbb R^N\times\mathbb R$ satisfying the skew-periodicity
\begin{equation*}
u(y+k,r-p\cdot k)=u(y,r),
\qquad
u(y,r+1)=u(y,r).
\end{equation*}
The local parabolic Schauder estimate for
$$
c_pu_r+b_p\cdot D_yu+d_pu-\Delta_yu=f
$$
is applied on ordinary parabolic cylinders in the variables $(y,r)$. Its constant depends only on the lower bound of $|c_p|$, the Hölder norms of $b_p,d_p$, the Hölder exponent, and the dimension. By the assumptions, these quantities are uniformly bounded for $p\in K$.

It remains to explain why the local estimates can be patched together with constants independent of $p$. The skew-periodicity is generated by the translations
$$
(y,r)\mapsto(y+e_i,r-p_i),
\qquad i=1,\ldots,N,
$$
and
$$
(y,r)\mapsto(y,r+1).
$$
These transition maps are translations in the variables $(y,r)$; in particular, they do not mix the parabolic time variable $r$ with the spatial derivatives in the local equation. Hence the local parabolic Schauder estimates are invariant under these transitions.

Since $p$ ranges in the compact set $K$, the fundamental parallelepipeds associated with the lattice generated by
$
(e_i,-p_i),$ $ i=1,\ldots,N,
$
and
$
(0,1)
$
have uniformly bounded diameter and fixed volume. Therefore one can cover each fundamental domain by a number of parabolic cylinders bounded only in terms of $K$. Using the skew-periodicity, the local estimates on this finite covering give the global estimate on $\mathbb T_p^{N+1}$ with a constant independent of $p\in K$.

The Fredholm property follows from the usual periodic-parabolic theory. Indeed, the solution operator over one period in the time variable $r$ is compact on Hölder spaces by parabolic smoothing. Therefore the periodic solvability problem is equivalent to an equation of the form $I-P_p$ with $P_p$ compact. Hence $\mathcal P_p$ is Fredholm of index $0$, and the standard Fredholm alternative holds.
\end{proof}
\begin{proposition}
For every fixed $p\in\mathbb R^N$, the operator
$$
\widetilde A_p:
(\widetilde\psi,\kappa)\longmapsto
\widetilde L_p\widetilde\psi+\kappa a_p
$$
is an isomorphism from $\mathcal X_p$ to $\mathcal Y_p$.\label{isomorphism}
\end{proposition}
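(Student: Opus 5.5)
The plan is to combine the Fredholm alternative of Lemma~\ref{lem:uniform-skew-schauder} with the fact that translation invariance of the one-dimensional corrector produces an explicit positive element of $\ker\widetilde L_p$, and then to use periodic-parabolic principal eigenvalue theory (Krein--Rutman, as in Hess~\cite{Hess1991}) to conclude that this kernel is exactly one-dimensional.

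\emph{Step 1: the operator is genuinely parabolic.} Lemma~\ref{lem:uniform-skew-schauder} applies only if the coefficient of $\widetilde\psi_r$ is nonzero. By (F1), $F(s,\rho p)/\rho\ge0$, hence $F_\infty(p)\ge0$. Since $\mu(p)>F_\infty(p)$ for $p\neq0$ and $\mu(0)>0$, we get
\[
c_0(p)=-\mu(p)<0 .
\]
The coefficients $F_s(\chi_p^0,pa_p)$ and $D_pF(\chi_p^0,pa_p)$ are Hölder continuous, because $F\in C^{3,\alpha}$ and $\chi_p^0\in C^2$. Therefore $\widetilde L_p:C^{2+\alpha,1+\alpha/2}_{\rm per}\to C^{\alpha,\alpha/2}_{\rm per}$ is Fredholm of index $0$. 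The same holds for its formal adjoint $\widetilde L_p^*u=-c_0u_r-\operatorname{div}_y(D_pF\,u)+F_s u-\Delta_yu$.

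\emph{Step 2: the kernel.} Write $\chi^\tau(z):=\chi_p^0(z+\tau)$. For every $\tau$ this function solves the cell ODE. Differentiating in $\tau$ at $\tau=0$ gives $L_p a_p=0$, where $a_p=(\chi_p^0)'$ is regarded as a function of $z$ alone; this uses $(\chi_p^0)''=a_p'$ to match the $\psi_z$-terms. In the variables $(y,r)$ this says that $\widetilde a_p(y,r)=a_p(p\cdot y+r)$ is a skew-periodic, strictly positive element of $\ker\widetilde L_p$.

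By periodic-parabolic Krein--Rutman theory, a positive solution of $\widetilde L_p u=0$ forces the principal periodic eigenvalue of $\widetilde L_p$ to be $0$. That eigenvalue is algebraically simple, so $\ker\widetilde L_p=\operatorname{span}\{\widetilde a_p\}$. Since the index is $0$, $\ker\widetilde L_p^*$ is also one-dimensional, spanned by a principal eigenfunction $\phi_p^*>0$. The range is then characterized by
\[
\operatorname{Ran}\widetilde L_p=\Bigl\{f\in\mathcal Y_p:\int_{\mathbb T_p^{N+1}}f\,\phi_p^*=0\Bigr\}.
\]
I expect this step to be the main obstacle. It requires a careful citation of the simplicity and positivity results on the skew torus: the strong maximum principle together with the compact, strongly positive period map of the $r$-evolution. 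I would argue directly as follows. If $v\in\ker\widetilde L_p$, choose $t$ so that $\widetilde a_p-tv\ge0$ with a zero. The strong maximum principle for the periodic-parabolic problem then forces $\widetilde a_p-tv\equiv0$. This gives simplicity of the kernel, and the same argument applied to $\widetilde L_p^*$ gives positivity of $\phi_p^*$.

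\emph{Step 3: bijectivity.} For injectivity, suppose $\widetilde L_p\widetilde\psi+\kappa\widetilde a_p=0$. Pairing with $\phi_p^*$ gives $\kappa\int\widetilde a_p\phi_p^*=0$, and the integral is positive, so $\kappa=0$. Then $\widetilde\psi=t\widetilde a_p$ for some $t$. The mean of $\widetilde a_p$ over $\mathbb T_p^{N+1}$ equals $\int_0^1 a_p=1$, so the mean-zero constraint gives $t=0$.

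For surjectivity, take $f\in\mathcal Y_p$ and set
\[
\kappa:=\frac{\int f\phi_p^*}{\int\widetilde a_p\phi_p^*}.
\]
Then $f-\kappa\widetilde a_p\in\operatorname{Ran}\widetilde L_p$, so there is some $\widetilde\psi_1$ with $\widetilde L_p\widetilde\psi_1=f-\kappa\widetilde a_p$. Define $\widetilde\psi:=\widetilde\psi_1-\bigl(\int\widetilde\psi_1\bigr)\widetilde a_p$. This function has mean zero and still solves the equation, because $\widetilde a_p\in\ker\widetilde L_p$.

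\emph{Step 4: bounded inverse.} $\widetilde A_p$ is bounded, and we have shown it is bijective between Banach spaces, so the open mapping theorem gives a bounded inverse. Alternatively, the Schauder bound of Lemma~\ref{lem:uniform-skew-schauder} combined with a compactness argument controls $\|\widetilde\psi\|_{L^\infty}$ and yields the same conclusion.
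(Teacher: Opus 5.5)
Your proposal is correct and follows essentially the same route as the paper: $\widetilde L_p a_p=0$ by differentiating the cell ODE, and a strong-maximum-principle argument giving $\ker\widetilde L_p=\operatorname{span}\{a_p\}$ (you use a touching argument where the paper passes to $h=\widetilde\psi/a_p$). Then Fredholm index $0$ with a positive adjoint kernel element fixes $\kappa$, and the mean-zero normalization gives uniqueness, and your additions (verifying $c_0(p)<0$, the open mapping step) are welcome. One caveat: positivity of $\phi_p^*$ has to come from Krein--Rutman, as you first state, and not from ``the same'' touching argument, which needs a positive adjoint solution to compare against. Alternatively, all you actually need is $\widetilde a_p\notin\operatorname{Ran}\widetilde L_p$, which follows directly from the maximum principle at a minimum point of $\widetilde\psi+t\widetilde a_p$.
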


\begin{proof}
First, differentiating the unperturbed cell ODE
\begin{equation*}
|p|^2(\chi_p^0)''
=
c_0(p)(\chi_p^0)'
+
F(\chi_p^0,p(\chi_p^0)')
\end{equation*}
with respect to $z$, we obtain $L_pa_p=0.$
Equivalently, $\widetilde L_pa_p=0.$
We now prove that
\begin{equation*}
\ker \widetilde L_p=\operatorname{span}\{a_p\}.
\end{equation*}
Let $\widetilde L_p\widetilde\psi=0$ and write
$
\widetilde\psi=a_ph.
$
Using $\widetilde L_pa_p=0$, a direct computation gives
\begin{equation*}
c_0(p)h_r
+
\widetilde b_p(y,r)\cdot D_yh
-
\Delta_yh=0
\end{equation*}
on $\mathbb T_p^{N+1}$, where $\widetilde b_p$ is a bounded Hölder continuous vector field. Since $c_0(p)\ne0$, this is a linear periodic -parabolic equation with no zero-order term. By the strong maximum principle, every periodic solution is constant. Hence
$
\widetilde\psi=Ca_p.
$
Therefore
\begin{equation*}
\ker \widetilde L_p=\operatorname{span}\{a_p\}.
\end{equation*}

By Lemma~\ref{lem:uniform-skew-schauder}, applied with
$
\mathcal P_p=\widetilde L_p,
$
the operator
$$
\widetilde L_p:
C_{\rm per}^{2+\alpha,1+\alpha/2}(\mathbb T_p^{N+1})
\to
C_{\rm per}^{\alpha,\alpha/2}(\mathbb T_p^{N+1})
$$
is Fredholm of index $0$. Let $\rho_p^*>0$ be the positive normalized generator of the adjoint kernel:
\begin{equation*}
\widetilde L_p^*\rho_p^*=0,
\qquad
\int_{\mathbb T_p^{N+1}}\rho_p^*=1.
\end{equation*}
Then
$
\widetilde L_p\widetilde\psi=f
$
is solvable if and only if $\int_{\mathbb T_p^{N+1}}f\rho_p^*=0.$
Therefore
$$
\widetilde L_p\widetilde\psi+\kappa a_p=f
$$
is solvable if and only if
\begin{equation*}
\int_{\mathbb T_p^{N+1}}(f-\kappa a_p)\rho_p^*=0.
\end{equation*}
Thus the unique admissible value of $\kappa$ is
\begin{equation*}
\kappa
=
\frac{
\int_{\mathbb T_p^{N+1}}f\rho_p^*
}{
\int_{\mathbb T_p^{N+1}}a_p\rho_p^*
}.
\end{equation*}
The denominator is strictly positive because $a_p>0$ and $\rho_p^*>0$.

For this value of $\kappa$, the equation for $\widetilde\psi$ is solvable. The solution is unique after imposing the normalization
\begin{equation*}
\int_{\mathbb T_p^{N+1}}\widetilde\psi=0.
\end{equation*}
Indeed, the kernel is exactly $\operatorname{span}\{a_p\}$ and
$
\int_{\mathbb T_p^{N+1}}a_p>0.
$
Hence $\widetilde A_p$ is an isomorphism.
\end{proof}
We next prove the required uniform invertibility.
\begin{proposition}[\textbf{Uniform invertibility on compact sets}]
Let $K\subset\mathbb R^N$ be a compact set. There exists $B_K>0$ such that, for every $p\in K$ and every $f\in\mathcal Y_p$, the equation
\begin{equation*}
\widetilde L_p\widetilde\psi+\kappa a_p=f,
\qquad
\int_{\mathbb T_p^{N+1}}\widetilde\psi=0,
\end{equation*}
has a unique solution $(\widetilde\psi,\kappa)\in\mathcal X_p$, and
\begin{equation*}
\|\widetilde\psi\|_{C^{2+\alpha,1+\alpha/2}(\mathbb T_p^{N+1})}
+
|\kappa|
\le
B_K\|f\|_{C^{\alpha,\alpha/2}(\mathbb T_p^{N+1})}.
\end{equation*}
\end{proposition}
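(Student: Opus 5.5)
Existence and uniqueness for each fixed $p\in K$ is exactly Proposition~\ref{isomorphism}, so only the uniform bound needs proof. I would argue by contradiction, using compactness in $p$ together with the uniform Schauder estimate of Lemma~\ref{lem:uniform-skew-schauder}.

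\textbf{Setup.} First I check that the hypotheses of Lemma~\ref{lem:uniform-skew-schauder} hold uniformly on $K$.

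The coefficients of $\widetilde L_p$ are
\[
b_p=D_pF(\chi_p^0,pa_p),\qquad d_p=F_s(\chi_p^0,pa_p).
\]
By Proposition~\ref{Bound} and the regularity (F4), both are bounded in $C^{\alpha,\alpha/2}$ uniformly for $p\in K$.

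The lemma also needs $|c_0(p)|\ge c_K>0$. I would get this from $\mu(p)>F_\infty(p)$ and positivity. Note that $\mu(p)\ge \min F$ at a minimum point of $v_p$, since there $\mu v=F(s,pv)\ge c_0$ with $v\le 1$. This gives $\mu(p)\ge c_0>0$, hence $|c_0(p)|\ge c_0$.

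\textbf{Contradiction argument.} Suppose the bound fails. Then there are $p_n\in K$, $f_n$, and solutions $(\widetilde\psi_n,\kappa_n)$ normalized so that
\[
\|\widetilde\psi_n\|_{C^{2+\alpha,1+\alpha/2}}+|\kappa_n|=1,\qquad \|f_n\|_{C^{\alpha,\alpha/2}}\to0 .
\]
Pass to a subsequence with $p_n\to p\in K$ and $\kappa_n\to\kappa$.

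To compare functions living on different skew tori, I would pull everything back to the fixed torus in the original variables $(y,z)$ via $z=p\cdot y+r$. There the domain $\mathbb T^{N+1}$ is independent of $p$, and the operator $L_{p_n}$ has coefficients depending continuously on $p_n$.

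Hölder norms in $(y,r)$ and in $(y,z)$ are not identical. However, the lemma's estimate holds in the $(y,r)$ frame, and Arzelà--Ascoli applies on a fixed period cell in $(y,r)$ after translating by $r=z-p_n\cdot y$. The functions $\widetilde\psi_n(y,r)$, extended skew-periodically to $\mathbb R^{N+1}$, are bounded in $C^{2+\alpha,1+\alpha/2}$ on all of $\mathbb R^{N+1}$. So a subsequence converges in $C^{2,1}_{\rm loc}$ to some $\widetilde\psi$. This limit inherits $p$-skew periodicity, since the lattice generators $(e_i,-p_{n,i})$ converge to $(e_i,-p_i)$ and uniform Lipschitz bounds allow passing to the limit in the periodicity identities.

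Continuity of $p\mapsto\chi_p^0,a_p$ comes from Proposition~\ref{continuity} and ODE dependence; near $p=0$ I would use the $C^2$ bounds of Proposition~\ref{Bound} plus uniqueness of the limit. With this, the limit satisfies
\[
\widetilde L_p\widetilde\psi+\kappa a_p=0,\qquad \int\widetilde\psi=0 .
\]
By Proposition~\ref{isomorphism}, $\widetilde\psi=0$ and $\kappa=0$.

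\textbf{Closing the argument.} Lemma~\ref{lem:uniform-skew-schauder}, applied to $\widetilde L_{p_n}\widetilde\psi_n=f_n-\kappa_na_{p_n}$, gives
\[
\|\widetilde\psi_n\|_{C^{2+\alpha,1+\alpha/2}}\le C_K\bigl(\|f_n\|+|\kappa_n|\,\|a_{p_n}\|_{C^{\alpha,\alpha/2}}+\|\widetilde\psi_n\|_{L^\infty}\bigr)\to0 .
\]
Here $\|\widetilde\psi_n\|_{L^\infty}\to0$ by the local uniform convergence together with periodicity and the bounded fundamental domain. Combined with $\kappa_n\to0$, this contradicts the normalization.

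\textbf{Main obstacle.} The delicate step is continuity of the profile $\chi_p^0$ (and $a_p$) in $p$ in the $C^{\alpha}$ sense at $p=0$, where the cell ODE degenerates. Proposition~\ref{Bound} gives uniform $C^2$ bounds on $\chi_p-z$. By Arzelà--Ascoli, every limit of $\chi^0_{p_n}$ solves the $p=0$ equation with the normalization $\chi(0)=0$, and uniqueness of that solution gives convergence in $C^{1,\beta}$. That is enough for Hölder convergence of the coefficients.
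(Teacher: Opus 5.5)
Your proposal is correct and follows essentially the same compactness--contradiction argument as the paper: normalize, extract $p_n\to p_\infty$ and $\kappa_n\to\kappa_\infty$, pass to a skew-periodic, mean-zero limit solving the homogeneous problem, conclude $\widetilde\psi_\infty=0$, $\kappa_\infty=0$ from Proposition~\ref{isomorphism}, and contradict the normalization by upgrading $L^\infty$ convergence through the uniform Schauder estimate. You also verify the uniform hypotheses the paper only asserts: $\mu(p)\ge c_0$ from the minimum point of $v_p$, uniform H\"older bounds on the coefficients, and $C^{1,\beta}$ continuity of $\chi_p^0$ at $p=0$ via Arzel\`a--Ascoli and uniqueness.
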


\begin{proof}
For each fixed $p$, the existence and uniqueness follow from Proposition \ref{isomorphism}. We prove that the inverse estimates are uniform for $p\in K$.

Suppose by contradiction that the estimate is false. Then there exist
$
p_n\in K,$ $ f_n\in\mathcal Y_{p_n},
$
and solutions
$
(\widetilde\psi_n,\kappa_n)\in\mathcal X_{p_n}
$
of
\begin{equation*}
\widetilde L_{p_n}\widetilde\psi_n+\kappa_n a_{p_n}=f_n,
\qquad
\int_{\mathbb T_{p_n}^{N+1}}\widetilde\psi_n=0
\end{equation*}
such that
\begin{equation*}
\|\widetilde\psi_n\|_{C^{2+\alpha,1+\alpha/2}(\mathbb T_{p_n}^{N+1})}
+
|\kappa_n|
=1,
\end{equation*}
while
\begin{equation*}
\|f_n\|_{C^{\alpha,\alpha/2}(\mathbb T_{p_n}^{N+1})}\to0.
\end{equation*}
By taking a subsequence, we may assume that
$
p_n\to p_\infty\in K
$
and
$
\kappa_n\to\kappa_\infty,
$ as $n \to \infty$.

We view each $\widetilde\psi_n$ as a function on $\mathbb R^{N+1}$ satisfying the skew-periodicity
\begin{equation*}
\widetilde\psi_n(y+k,r-p_n\cdot k)=\widetilde\psi_n(y,r),
\qquad
\widetilde\psi_n(y,r+1)=\widetilde\psi_n(y,r).
\end{equation*}
By the normalization
\begin{equation*}
\|\widetilde\psi_n\|_{C^{2+\alpha,1+\alpha/2}(\mathbb T_{p_n}^{N+1})}
+
|\kappa_n|
=1,
\end{equation*}
the functions $\widetilde\psi_n$ are uniformly bounded in
$
C_{\rm loc}^{2+\alpha,1+\alpha/2}(\mathbb R^N\times\mathbb R).
$
Therefore, by the compact embedding of parabolic Hölder spaces, after passing to a further subsequence, there exists a function $\widetilde\psi_\infty$ such that
\begin{equation*}
\widetilde\psi_n\to\widetilde\psi_\infty
\end{equation*}
locally in $C^{2+\beta,1+\beta/2}$ for every $0<\beta<\alpha$  as $n \to \infty$.

Since
$
p_n\to p_\infty,
$
the skew-periodicity passes to the limit. Thus, for every $k\in\mathbb Z^N$,
\begin{equation*}
\widetilde\psi_\infty(y+k,r-p_\infty\cdot k)
=
\widetilde\psi_\infty(y,r),
\qquad
\widetilde\psi_\infty(y,r+1)
=
\widetilde\psi_\infty(y,r).
\end{equation*}
Hence $\widetilde\psi_\infty$ descends to a function on $\mathbb T_{p_\infty}^{N+1}$.

The normalization also passes to the limit. Indeed, using the parametrization
$$
(y,r)=(\theta,\sigma-p_n\cdot\theta),
\qquad
(\theta,\sigma)\in[0,1]^N\times[0,1],
$$
of a fundamental domain of $\mathbb T_{p_n}^{N+1}$, we have
\begin{equation*}
0
=
\int_{\mathbb T_{p_n}^{N+1}}\widetilde\psi_n
=
\int_{[0,1]^{N+1}}
\widetilde\psi_n(\theta,\sigma-p_n\cdot\theta)\,d\theta d\sigma.
\end{equation*}
Letting $n\to\infty$, we get
\begin{equation*}
\int_{\mathbb T_{p_\infty}^{N+1}}\widetilde\psi_\infty=0.
\end{equation*}

We now pass to the limit in the equation. Since $p_n\to p_\infty$ and the unperturbed correctors depend continuously on $p$, the coefficients of $\widetilde L_{p_n}$ converge locally uniformly to those of $\widetilde L_{p_\infty}$. Since
$
f_n\to0
$
and
$$
\widetilde\psi_n\to\widetilde\psi_\infty
\quad\text{locally in }C^{2+\beta,1+\beta/2},
$$
we obtain
\begin{equation*}
\widetilde L_{p_\infty}\widetilde\psi_\infty
+
\kappa_\infty a_{p_\infty}
=0,
\qquad
\int_{\mathbb T_{p_\infty}^{N+1}}\widetilde\psi_\infty=0.
\end{equation*}
By Proposition \ref{lem:uniform-skew-schauder} applied to $p_\infty$, we conclude that $\widetilde\psi_\infty=0,
$ $
\kappa_\infty=0.$

We next show that this convergence is strong in the full parabolic Hölder norm. From the local uniform convergence and the skew-periodicity, we have
\begin{equation*}
\|\widetilde\psi_n\|_{L^\infty(\mathbb T_{p_n}^{N+1})}\to0.
\end{equation*}
Indeed, using the same parametrization of the fundamental domains,
$
(y,r)=(\theta,\sigma-p_n\cdot\theta),
$
this follows from the uniform convergence of
$
\widetilde\psi_n(\theta,\sigma-p_n\cdot\theta)
$
to
$
\widetilde\psi_\infty(\theta,\sigma-p_\infty\cdot\theta)=0
$
on the compact set $[0,1]^{N+1}$.

Applying Lemma~\ref{lem:uniform-skew-schauder} to
\begin{equation*}
\widetilde L_{p_n}\widetilde\psi_n
=
f_n-\kappa_n a_{p_n},
\end{equation*}
we obtain
\begin{equation}
\begin{aligned}
\|\widetilde\psi_n\|_{C^{2+\alpha,1+\alpha/2}(\mathbb T_{p_n}^{N+1})}
&\le
C_K
\left(
\|f_n-\kappa_n a_{p_n}\|_{C^{\alpha,\alpha/2}(\mathbb T_{p_n}^{N+1})}
+
\|\widetilde\psi_n\|_{L^\infty(\mathbb T_{p_n}^{N+1})}
\right)
\\
&\le
C_K
\left(
\|f_n\|_{C^{\alpha,\alpha/2}(\mathbb T_{p_n}^{N+1})}
+
|\kappa_n|
+
\|\widetilde\psi_n\|_{L^\infty(\mathbb T_{p_n}^{N+1})}
\right).
\end{aligned}
\end{equation}
The right-hand side tends to $0$. Hence
\begin{equation*}
\|\widetilde\psi_n\|_{C^{2+\alpha,1+\alpha/2}(\mathbb T_{p_n}^{N+1})}
+
|\kappa_n|
\to0,
\end{equation*}
which contradicts
\begin{equation*}
\|\widetilde\psi_n\|_{C^{2+\alpha,1+\alpha/2}(\mathbb T_{p_n}^{N+1})}
+
|\kappa_n|
=1.
\end{equation*}
Therefore, the desired uniform estimate holds.
\end{proof}

We now solve the nonlinear cell problem by a fixed point argument. For
$
\widetilde\psi\in\mathcal X_p,
$
we write
$
\psi(y,z):=\widetilde\psi(y,z-p\cdot y).
$
Equivalently,
$
\widetilde\psi(y,r)=\psi(y,p\cdot y+r).
$
We define
$$
\widetilde{\mathcal N}(p,\widetilde\psi,\kappa,\eta)
$$
to be the expression of
$
\mathcal N(p,\psi,\kappa,\eta)
$
in the variables $(y,r)$. Then
\begin{equation*}
\widetilde{\mathcal N}(p,0,0,0)=0,
\end{equation*}
and
\begin{equation*}
D_{(\widetilde\psi,\kappa)}
\widetilde{\mathcal N}(p,0,0,0)(\widetilde\psi,\kappa)
=
\widetilde A_p(\widetilde\psi,\kappa).
\end{equation*}
Set
$
Z:=(\widetilde\psi,\kappa).
$
Taylor's formula gives
\begin{equation*}
\widetilde{\mathcal N}(p,Z,\eta)
=
\widetilde A_pZ
+
\eta b_p
+
\widetilde R_p(Z,\eta),
\end{equation*}
where $b_p(y,r)
=
W\left(
y,
\chi_p^0(p\cdot y+r),
p\,a_p(p\cdot y+r)
\right).$
Here
$
a_p=(\chi_p^0)'.
$
For every compact set $K\Subset\mathbb R^N$, define
\begin{equation*}
M_K:=
\sup_{p\in K}\|b_p\|_{\mathcal Y_p}.
\end{equation*}
Then $M_K<\infty$.

Moreover, since $F$ and $W$ are $C^{3,\alpha}$ on the relevant compact set, there exist constants
$
r_K>0,
$ $
C_K^{(R)}>0
$
such that, whenever
$$
\|Z_i\|_{\mathcal X_p}\le r_K,
\qquad
|\eta|\le1,
\qquad
i=1,2,
$$
we have
\begin{equation*}
\|\widetilde R_p(Z_1,\eta)-\widetilde R_p(Z_2,\eta)\|_{\mathcal Y_p}
\le
C_K^{(R)}
\left(
\|Z_1\|_{\mathcal X_p}
+
\|Z_2\|_{\mathcal X_p}
+
|\eta|
\right)
\|Z_1-Z_2\|_{\mathcal X_p}.
\end{equation*}
In particular,
\begin{equation*}
\|\widetilde R_p(Z,\eta)\|_{\mathcal Y_p}
\le
C_K^{(R)}
\left(
\|Z\|_{\mathcal X_p}^2
+
|\eta|\|Z\|_{\mathcal X_p}
\right).
\end{equation*}
The fixed point argument yields the following local solution branch for the nonlinear cell problem.

\begin{proposition}
For every compact set $K\Subset\mathbb R^N$, there exist constants
$
\eta_K>0,
$ $
C_K>0
$
such that, for all $p\in K$ and $|\eta|\le\eta_K$, there exists a unique small solution
$$
Z_p^\eta=(\widetilde\psi_p^\eta,\kappa_p^\eta)\in\mathcal X_p
$$
of
\begin{equation*}
\widetilde{\mathcal N}(p,Z_p^\eta,\eta)=0.
\end{equation*}
Moreover,
\begin{equation*}
\|\widetilde\psi_p^\eta\|_{C^{2+\alpha,1+\alpha/2}}
+
|\kappa_p^\eta|
\le
C_K|\eta|.
\end{equation*}\label{estimate}
\end{proposition}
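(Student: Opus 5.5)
We rewrite the equation $\widetilde{\mathcal N}(p,Z,\eta)=0$ as a fixed point problem and apply the contraction mapping principle, uniformly in $p\in K$. By the uniform invertibility proposition, $\widetilde A_p:\mathcal X_p\to\mathcal Y_p$ is an isomorphism with $\|\widetilde A_p^{-1}\|\le B_K$ for all $p\in K$. Using the Taylor decomposition $\widetilde{\mathcal N}(p,Z,\eta)=\widetilde A_pZ+\eta b_p+\widetilde R_p(Z,\eta)$, the equation is equivalent to
\begin{equation*}
Z=\mathcal T_p^\eta(Z):=-\widetilde A_p^{-1}\bigl(\eta b_p+\widetilde R_p(Z,\eta)\bigr).
\end{equation*}
We take the closed ball $\mathcal B_\rho:=\{Z\in\mathcal X_p:\|Z\|_{\mathcal X_p}\le\rho\}$ with $\rho=\rho(\eta):=2B_KM_K|\eta|$. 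This set is a complete metric space, since $\mathcal X_p$ is a Banach space and the mean-zero constraint is closed.

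\textbf{Self-map.} For $Z\in\mathcal B_\rho$ with $\rho\le r_K$ and $|\eta|\le 1$, the remainder bound gives
\begin{equation*}
\|\mathcal T_p^\eta(Z)\|_{\mathcal X_p}\le B_K\bigl(M_K|\eta|+C_K^{(R)}(\rho^2+|\eta|\rho)\bigr).
\end{equation*}
With $\rho=2B_KM_K|\eta|$, the right-hand side is at most $\rho/2+B_KC_K^{(R)}\rho(\rho+|\eta|)$. This is $\le\rho$ provided $B_KC_K^{(R)}(2B_KM_K+1)|\eta|\le 1/2$.

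\textbf{Contraction.} The Lipschitz estimate on $\widetilde R_p$ gives
\begin{equation*}
\|\mathcal T_p^\eta(Z_1)-\mathcal T_p^\eta(Z_2)\|\le B_KC_K^{(R)}(2\rho+|\eta|)\|Z_1-Z_2\|.
\end{equation*}
The factor is at most $1/2$ once $|\eta|\le\eta_K$, for an $\eta_K$ depending only on $B_K,M_K,C_K^{(R)},r_K$. We also require $2B_KM_K\eta_K\le r_K$ and $\eta_K\le 1$.

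Banach's fixed point theorem then yields a unique fixed point $Z_p^\eta\in\mathcal B_{\rho(\eta)}$, with $\|Z_p^\eta\|\le 2B_KM_K|\eta|$. So we may take $C_K=2B_KM_K$. Uniqueness ``among small solutions'' holds in the larger ball of radius $\rho_K:=2B_KM_K\eta_K$: the contraction estimate holds on that whole ball, so any solution there coincides with $Z_p^\eta$. All constants are independent of $p\in K$, because $B_K$, $M_K$, $C_K^{(R)}$ and $r_K$ are.

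\textbf{Main obstacle.} The delicate point is justifying that the remainder estimates hold uniformly in $p\in K$ on the varying spaces $\mathcal X_p,\mathcal Y_p$; this was asserted just before the statement. If it needs checking, I would argue as follows. $\widetilde R_p$ is built from $F$, $W$ and their derivatives, evaluated at arguments that lie in a compact set, because $\|Z\|\le r_K$ and $\chi_p^0$ is bounded in $C^2$ uniformly by Proposition~\ref{Bound}. Since $F,W\in C^{3,\alpha_0}$ and $\alpha<\alpha_0$, the composition (Nemytskii) maps are $C^1$ into $C^{\alpha,\alpha/2}$, with Lipschitz derivative, and their constants depend only on these compact bounds. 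One subtle point is that $\chi_p^0$ is only known to be $C^2$ in $z$ with uniform bounds. A Hölder bound on $a_p$ in the parabolic norm is needed for the coefficients; it follows from the uniform $C^2$ bound on $\chi_p^0$, since $a_p$ is then Lipschitz.
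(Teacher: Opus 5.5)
Your proposal is correct and follows the paper's proof essentially verbatim: the same fixed-point map $-\widetilde A_p^{-1}[\eta b_p+\widetilde R_p(Z,\eta)]$ on the ball of radius $2B_KM_K|\eta|$, with the same smallness conditions on $\eta_K$. In particular, your self-map condition $B_KC_K^{(R)}(2B_KM_K+1)\eta_K\le 1/2$ is equivalent to the paper's $B_KC_K^{(R)}(R_0^2+R_0)\eta_K\le B_KM_K$. Your additional remarks are sound refinements of points the paper only asserts: uniqueness in fact holds in the $\eta$-independent ball of radius $2B_KM_K\eta_K$, and the remainder bounds are uniform in $p\in K$.
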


\begin{proof}
The equation
$
\widetilde{\mathcal N}(p,Z,\eta)=0
$
is equivalent to $\widetilde A_pZ
=
-\eta b_p-\widetilde R_p(Z,\eta).$
Therefore,
\begin{equation*}
Z
=
-\widetilde A_p^{-1}
\left[
\eta b_p+\widetilde R_p(Z,\eta)
\right].
\end{equation*}
Define
\begin{equation*}
\mathcal T_p(Z)
:=
-\widetilde A_p^{-1}
\left[
\eta b_p+\widetilde R_p(Z,\eta)
\right].
\end{equation*}
By the previous proposition,
\begin{equation*}
\|\mathcal T_p(Z)\|_{\mathcal X_p}
\le
B_K
\left[
M_K|\eta|
+
C_K^{(R)}
\left(
\|Z\|_{\mathcal X_p}^2
+
|\eta|\|Z\|_{\mathcal X_p}
\right)
\right].
\end{equation*}

Set
$
R_0:=2B_KM_K.
$
For $|\eta|>0$, define
\begin{equation*}
\mathcal B_\eta^p
:=
\left\{
Z\in\mathcal X_p:
\|Z\|_{\mathcal X_p}\le R_0|\eta|
\right\}.
\end{equation*}
We choose $\eta_K>0$ small enough such that $R_0\eta_K\le r_K,$
and $B_KC_K^{(R)}(R_0^2+R_0)\eta_K
\le
B_KM_K.$
Then, if $Z\in\mathcal B_\eta^p$ and $|\eta|\le\eta_K$, we have
\begin{equation*}
\begin{aligned}
\|\mathcal T_p(Z)\|_{\mathcal X_p}
&\le
B_KM_K|\eta|
+
B_KC_K^{(R)}
\left(
R_0^2|\eta|^2
+
R_0|\eta|^2
\right)
\\
&\le
2B_KM_K|\eta|
\\
&=
R_0|\eta|.
\end{aligned}
\end{equation*}
Thus $\mathcal T_p$ maps $\mathcal B_\eta^p$ into itself.

Next, for $Z_1,Z_2\in\mathcal B_\eta^p$, we have
\begin{equation*}
\begin{aligned}
\|\mathcal T_p(Z_1)-\mathcal T_p(Z_2)\|_{\mathcal X_p}
&\le
B_K
\|\widetilde R_p(Z_1,\eta)-\widetilde R_p(Z_2,\eta)\|_{\mathcal Y_p}
\\
&\le
B_KC_K^{(R)}
\left(
\|Z_1\|_{\mathcal X_p}
+
\|Z_2\|_{\mathcal X_p}
+
|\eta|
\right)
\|Z_1-Z_2\|_{\mathcal X_p}
\\
&\le
B_KC_K^{(R)}
(2R_0|\eta|+|\eta|)
\|Z_1-Z_2\|_{\mathcal X_p}.
\end{aligned}
\end{equation*}
We choose $\eta_K$ smaller if necessary such that
\begin{equation*}
B_KC_K^{(R)}(2R_0+1)\eta_K\le\frac12.
\end{equation*}
Then $\mathcal T_p$ is a contraction on $\mathcal B_\eta^p$.

By the contraction mapping theorem, there exists a unique fixed point
$
Z_p^\eta=(\widetilde\psi_p^\eta,\kappa_p^\eta)\in\mathcal B_\eta^p.
$
Therefore
\begin{equation*}
\|\widetilde\psi_p^\eta\|_{C^{2+\alpha,1+\alpha/2}}
+
|\kappa_p^\eta|
\le
R_0|\eta|.
\end{equation*}
This proves the proposition.
\end{proof}

Returning to the original variables, we set
\begin{equation*}
\psi_p^\eta(y,z)
:=
\widetilde\psi_p^\eta(y,z-p\cdot y).
\end{equation*}
Then define
\begin{equation*}
w_p^\eta:=w_p^0+\psi_p^\eta,
\qquad
c_\eta(p):=c_0(p)+\kappa_p^\eta,
\qquad
\chi_p^\eta(y,z):=z+w_p^\eta(y,z).
\end{equation*}
We next derive the corrector estimates needed in the homogenization argument.
\begin{lemma}[\textbf{Higher regularity of the corrector}]
Let $K\Subset\mathbb R^N$. Then there exists $C_K>0$ such that
\begin{equation*}
\|\chi_p^\eta-z\|_{C^2(\mathbb T^{N+1})}\le C_K
\end{equation*}
for all $p\in K$ and $|\eta|\le\eta_K$.
\end{lemma}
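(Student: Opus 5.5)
The bound follows by splitting $\chi_p^\eta-z=w_p^0+\psi_p^\eta$ and estimating the two pieces separately. I would then bound each piece uniformly in $p\in K$ and $|\eta|\le\eta_K$. The first piece is independent of $\eta$ and depends only on $z$. By Proposition~\ref{Bound}, $\|w_p^0\|_{C^2(\mathbb T)}=\|\chi_p^0-z\|_{C^2(\mathbb T)}\le C_K$ for all $p\in K$. Since $w_p^0$ has no $y$-dependence, this is also a $C^2(\mathbb T^{N+1})$ bound.

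For $\psi_p^\eta$, Proposition~\ref{estimate} gives
\[
\|\widetilde\psi_p^\eta\|_{C^{2+\alpha,1+\alpha/2}(\mathbb T_p^{N+1})}\le C_K|\eta|\le C_K\eta_K .
\]
I would transfer this to the original variables via $\psi_p^\eta(y,z)=\widetilde\psi_p^\eta(y,z-p\cdot y)$, using the chain rule:
\[
(\psi_p^\eta)_z=\widetilde\psi_r,\qquad D_y\psi_p^\eta=D_y\widetilde\psi-p\,\widetilde\psi_r .
\]
The terms $\psi_p^\eta$, $D_y\psi_p^\eta$ and $(\psi_p^\eta)_z$ are then controlled by $(1+|p|)$ times the norm of $\widetilde\psi$, and $|p|$ is bounded on $K$.

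The second derivatives are the main obstacle. The norm on $\mathcal X_p$ controls only $D_y^2\widetilde\psi$ and $\widetilde\psi_r$; it does not directly control $\widetilde\psi_{rr}$ or $D_y\widetilde\psi_r$. Yet $(\psi_p^\eta)_{zz}=\widetilde\psi_{rr}$ and $D^2_{yz}\psi_p^\eta=D_y\widetilde\psi_r-p\widetilde\psi_{rr}$ involve exactly these. I would obtain the missing regularity by bootstrapping in the $r$ direction. The equation $\widetilde{\mathcal N}(p,Z_p^\eta,\eta)=0$, written in $(y,r)$ variables, is the parabolic equation
\[
(c_0+\kappa)\widetilde\psi_r-\Delta_y\widetilde\psi=G(y,r,\widetilde\psi,D_y\widetilde\psi,\widetilde\psi_r).
\]
Here $c_0+\kappa$ is bounded away from zero: by (F1) one has $\mu(p)>0$, and after shrinking $\eta_K$ also $|\kappa|\le C_K\eta_K\le\tfrac12\inf_K|c_0|$. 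The nonlinearity $G$ is $C^{2,\alpha}$, since $F$ and $W$ are $C^{3,\alpha}$ on compact sets and the coefficients built from $\chi_p^0$ inherit the $C^{3,\alpha}$ regularity of $F$ through the ODE. The coefficients are independent of nothing in particular in $r$, so I would take difference quotients in $r$ on the skew torus. Translation in $r$ commutes with the skew identifications, so $\delta_h\widetilde\psi=(\widetilde\psi(\cdot,\cdot+h)-\widetilde\psi)/h$ is again skew-periodic. It solves a linear equation of the form in Lemma~\ref{lem:uniform-skew-schauder}, with Hölder coefficients bounded uniformly in $p\in K$, and with right-hand side bounded in $C^{\alpha,\alpha/2}$ by the already known $C^{2+\alpha,1+\alpha/2}$ bound on $\widetilde\psi$ (possibly after lowering $\alpha$, as the paper permits).

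The uniform Schauder estimate of Lemma~\ref{lem:uniform-skew-schauder} then gives $\|\delta_h\widetilde\psi\|_{C^{2+\alpha,1+\alpha/2}}\le C_K$ uniformly in $h$, using $\|\delta_h\widetilde\psi\|_{L^\infty}\le\|\widetilde\psi_r\|_{L^\infty}$. Letting $h\to0$ bounds $\widetilde\psi_{rr}$ and $D_y\widetilde\psi_r$ in $L^\infty$, uniformly in $p\in K$ and $|\eta|\le\eta_K$. Combining this with the $D_y^2\widetilde\psi$ bound and the chain rule, I get $\|\psi_p^\eta\|_{C^2(\mathbb T^{N+1})}\le C_K$. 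Adding the estimate for $w_p^0$ then completes the proof.
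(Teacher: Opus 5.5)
Your overall plan is the paper's: regularize in the $r$-direction on the skew torus, apply the uniform estimate of Lemma~\ref{lem:uniform-skew-schauder}, and undo $r=z-p\cdot y$. Using difference quotients $\delta_h$ is an improvement, since it justifies the differentiation in $r$ that the paper performs formally. The gap is your claim that the forcing in the equation for $\delta_h\widetilde\psi$ is bounded in $C^{\alpha,\alpha/2}$ by the known bound on $\widetilde\psi$ alone. Because you work with the perturbation $\widetilde\psi=\Xi-\Xi^0$, your nonlinearity depends explicitly on $r$ through $\chi_p^0(p\cdot y+r)$ and $a_p(p\cdot y+r)$. In particular, the term $\kappa a_p$ produces the forcing $-\kappa\,\delta_h[a_p(p\cdot y+r)]$, and the $F$- and $W$-differences produce further terms containing $p\,\delta_h[a_p]$. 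A bound on these in $C^{\alpha,\alpha/2}$, uniform in $h$, is essentially a uniform H\"older bound on $(\chi_p^0)''=a_p'$; for the $\kappa$-term no factor $|p|$ helps in the $r$-direction. Proposition~\ref{Bound} gives only $C^2$ bounds, and your remark that the profile inherits the regularity of $F$ through the ODE is only qualitative. Near $p=0$ the cell ODE $|p|^2\chi''=c\chi'+F(\chi,p\chi')$ is singularly perturbed, and differentiating it yields only $|(\chi_p^0)'''|\lesssim|p|^{-2}$. Since $K_0=\overline{B_{L_0}(0)}$ always contains $0$, this uniformity is exactly what the lemma has to deliver.

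The simplest repair is to apply $\delta_h$ to the full $\Xi=\Xi^0+\widetilde\psi$; this is the rigorous form of the paper's argument, which differentiates the full equation for $\Xi$ in $r$. The equation $c_\eta(p)\Xi_r+F(\Xi,D_y\Xi)+\eta W(y,\Xi,D_y\Xi)=\Delta_y\Xi$ has no explicit $r$-dependence, and $\delta_h\Xi$ is skew-periodic even though $\Xi$ is not. Hence $\delta_h\Xi$ solves the homogeneous equation $c_\eta(p)(\delta_h\Xi)_r+\bar d_h\,\delta_h\Xi+\bar b_h\cdot D_y\delta_h\Xi=\Delta_y\delta_h\Xi$. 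Its coefficients are averages of $F_s+\eta W_s$ and $D_pF+\eta D_pW$ along the segment joining $(\Xi,D_y\Xi)$ at $r$ and at $r+h$. These are uniformly $C^{\alpha,\alpha/2}$ using only the $C^2$ bound of Proposition~\ref{Bound} and the bound of Proposition~\ref{estimate}. Lemma~\ref{lem:uniform-skew-schauder} then gives $\|\delta_h\Xi\|_{C^{2+\alpha,1+\alpha/2}}\le C_K\|\Xi_r\|_{L^\infty}$, and the rest of your argument goes through. Alternatively, keep your decomposition but first prove $\sup_{p\in K}\|\chi_p^0-z\|_{C^3(\mathbb T)}<\infty$. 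Away from $0$ this follows because the ODE is regular; near $p=0$, run the comparison argument of Proposition~\ref{Bound} one derivative higher. With $G_p(s,v)=F(s,pv)-\mu(p)v$ as there, $q=v_p'$ satisfies $|p|^2(v_pq'+q^2)=\partial_vG_p(s,v_p)(q-Q)$. Here $Q=-\partial_sG_p(s,v_p)/\partial_vG_p(s,v_p)$ is uniformly Lipschitz and $\partial_vG_p\le-\gamma_1$. Comparing at extrema of $q-Q$ gives $q-Q=O(|p|^2)$, and hence $v_p''=O(1)$.
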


\begin{proof}
We write the corrector in the variables
$
r=z-p\cdot y,
$ $
\Xi(y,r):=\chi_p^\eta(y,p\cdot y+r).
$
Then
$
\chi_p^\eta(y,z)=\Xi(y,z-p\cdot y).
$
In the variables $(y,r)$, the cell equation becomes
\begin{equation*}
c_\eta(p)\Xi_r
+
F(\Xi,D_y\Xi)
+
\eta W(y,\Xi,D_y\Xi)
=
\Delta_y\Xi.
\end{equation*}
By the construction of the corrector, we already know that
\begin{equation*}
\Xi-\Xi^0\in C_{\rm per}^{2+\alpha,1+\alpha/2}(\mathbb T_p^{N+1}).
\end{equation*}
In particular,
$
\Xi,D_y\Xi, D_y^2\Xi, \Xi_r
$
are bounded.

We now differentiate the equation with respect to $r$. Set
$
U:=\Xi_r.
$
Then $U$ satisfies the linear periodic-parabolic equation
\begin{equation*}
c_\eta(p)U_r
+
\left[
F_s(\Xi,D_y\Xi)
+
\eta W_s(y,\Xi,D_y\Xi)
\right]U
+
\left[
D_pF(\Xi,D_y\Xi)
+
\eta D_pW(y,\Xi,D_y\Xi)
\right]\cdot D_yU
=
\Delta_yU.
\end{equation*}
The coefficients of this equation are uniformly Hölder continuous for $p\in K$ and $|\eta|\le\eta_K$. Moreover, since $c_0(p)<0$ and $c_\eta(p)=c_0(p)+O(\eta)$, by choosing $\eta_K$ smaller if necessary, we have
\begin{equation*}
|c_\eta(p)|\ge c_K>0
\end{equation*}
for all $p\in K$. Hence this is a uniformly parabolic periodic equation with respect to the time variable $r$.

By the periodic-parabolic Schauder estimate applied to the equation for $U$, we obtain $U=\Xi_r\in C_{\rm per}^{2+\alpha,1+\alpha/2}(\mathbb T_p^{N+1}),$
with a norm bounded by a constant depending only on $K$. Consequently,
$
U_r=\Xi_{rr}
$
is bounded, and
$
D_yU=D^2_{yr}\Xi
$
is bounded.

We now return to the variables $(y,z)$. Since
$
\chi_p^\eta(y,z)=\Xi(y,z-p\cdot y),
$
we have
$$
(\chi_p^\eta)_z=\Xi_r,
$$
$$
D_y\chi_p^\eta=D_y\Xi-p\Xi_r,
$$
and
\begin{equation*}
(\chi_p^\eta)_{zz}=\Xi_{rr}.
\end{equation*}
Moreover,
\begin{equation*}
D^2_{yz}\chi_p^\eta
=
D^2_{yr}\Xi-p\Xi_{rr},
\end{equation*}
and
\begin{equation*}
D^2_{yy}\chi_p^\eta
=
D^2_{yy}\Xi
-
p\otimes D_y\Xi_r
-
D_y\Xi_r\otimes p
+
p\otimes p\,\Xi_{rr}.
\end{equation*}
All terms on the right-hand sides are bounded uniformly for $p\in K$, because $p$ ranges in a compact set and we have already bounded
$
D_y^2\Xi,$ $ D_y\Xi_r,$ $ \Xi_{rr}.
$
Therefore $\|\chi_p^\eta-z\|_{C^2(\mathbb T^{N+1})}\le C_K.$
\end{proof}
\begin{remark}\label{rem:C2beta-corrector}
The proof of Lemma 3.2 actually gives a slightly stronger estimate. After decreasing the Hölder exponent if necessary, there exist
$
0<\beta<\frac{\alpha}{2}
$
and a constant $C_K>0$ such that
\begin{equation*}
\|\chi_p^\eta-z\|_{C^{2,\beta}(\mathbb T^{N+1})}\le C_K
\end{equation*}
for all $p\in K$ and $|\eta|\le\eta_K$.

Indeed, after setting
$
r=z-p\cdot y,
$ $
\Xi(y,r)=\chi_p^\eta(y,p\cdot y+r),
$
one first applies the uniform parabolic Schauder estimate on the skew torus to the equation for
$
Z(y,r):=\Xi(y,r)-p\cdot y-r.
$
Then one applies the same estimate to the differentiated equation for
$
U:=\Xi_r.
$
This gives Hölder bounds for
$
D_y^2Z,$ $ D_yZ_r,$ $ Z_{rr}
$
in the variables $(y,r)$. Since the change of variables
$
(y,z)\mapsto (y,z-p\cdot y)
$
is linear and $p$ ranges in the compact set $K$, these estimates imply the stated uniform $C^{2,\beta}$ estimate in the variables $(y,z)$.
\end{remark}
\begin{proposition}
Assume that $K$ is a compact subset of $\mathbb R^N$. Then there exist constants
$$
m_K,\ M_K,\ C_K,\ \eta_K>0
$$
such that
\begin{equation*}
m_K\le(\chi_p^\eta)_z(y,z)\le M_K
\end{equation*}
for $(y,z)\in\mathbb R^N\times\mathbb R$, and
\begin{equation*}
\|\chi_p^\eta-z\|_{C^2(\mathbb T^{N+1})}\le C_K
\end{equation*}
for $|\eta|\le\eta_K$ and $p\in K$.
\end{proposition}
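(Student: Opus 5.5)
The $C^2$ bound is exactly the preceding Lemma on higher regularity of the corrector. It holds for $p\in K$ and $|\eta|\le\eta_K$, with $\eta_K$ given by Proposition~\ref{estimate}, possibly decreased as in that lemma. So the only new content is the two-sided bound on $(\chi_p^\eta)_z$. I will obtain it by perturbation from the unperturbed profile, using the smallness estimate of Proposition~\ref{estimate}.

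First I write
\[
(\chi_p^\eta)_z=(\chi_p^0)'(z)+(\psi_p^\eta)_z(y,z)=a_p(z)+\widetilde\psi_r(y,z-p\cdot y),
\]
where $\widetilde\psi=\widetilde\psi_p^\eta$. The identity $(\psi_p^\eta)_z=\widetilde\psi_r$ follows from the change of variables $r=z-p\cdot y$. By Proposition~\ref{Bound} there are $0<m_K^0\le M_K^0$ with $m_K^0\le a_p(z)\le M_K^0$ for all $p\in K$ and $z\in\mathbb R$. The norm on $\mathcal X_p$ contains $\|\widetilde\psi_r\|_{L^\infty}$. Hence Proposition~\ref{estimate} gives
\[
\sup_{(y,z)}|(\psi_p^\eta)_z(y,z)|\le\|\widetilde\psi_p^\eta\|_{C^{2+\alpha,1+\alpha/2}}\le C_K|\eta|,
\]
uniformly for $p\in K$ and $|\eta|\le\eta_K$.

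Next I decrease $\eta_K$ if necessary so that $C_K\eta_K\le m_K^0/2$. Then for all $(y,z)$,
\[
\tfrac12 m_K^0\le(\chi_p^\eta)_z(y,z)\le M_K^0+\tfrac12 m_K^0,
\]
so I may take $m_K:=m_K^0/2$ and $M_K:=M_K^0+m_K^0/2$. The estimates hold on all of $\mathbb R^N\times\mathbb R$ and not only on a fundamental domain. This is because $\psi_p^\eta$ is $\mathbb Z^{N+1}$-periodic in $(y,z)$, which is the same as the skew-periodicity of $\widetilde\psi$, and $a_p$ is $1$-periodic.

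The only point needing care is that the final $\eta_K$ must be the minimum of the thresholds used so far. These are the contraction threshold of Proposition~\ref{estimate}, the threshold ensuring $|c_\eta(p)|\ge c_K>0$ in the higher-regularity lemma, and the new one $C_K\eta_K\le m_K^0/2$. All three depend only on $K$, so their minimum is a positive constant depending only on $K$. Shrinking $\eta_K$ does not affect any of the previous estimates, since each was stated for all $|\eta|$ below its own threshold. The $C^2$ bound from the lemma therefore continues to hold, and the proposition follows.
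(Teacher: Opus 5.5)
Your proposal is correct and follows the paper's proof essentially line by line. You write $(\chi_p^\eta)_z=a_p(z)+(\widetilde\psi_p^\eta)_r(y,z-p\cdot y)$, bound the second term by $C_K|\eta|$ via Proposition~\ref{estimate}, shrink $\eta_K$ so that $C_K\eta_K\le m_K^0/2$, take $m_K=m_K^0/2$ and $M_K=M_K^0+m_K^0/2$, and cite the higher-regularity lemma for the $C^2$ bound; your remark that the final $\eta_K$ is the minimum of finitely many $K$-dependent thresholds is a harmless clarification of what the paper leaves implicit.
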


\begin{proof}
We use the variables
$$
r=z-p\cdot y,
\qquad
\Xi_p^\eta(y,r):=\chi_p^\eta(y,p\cdot y+r).
$$
Similarly, set
$$
\Xi_p^0(y,r):=\chi_p^0(p\cdot y+r).
$$
By the construction in Proposition 3.3,
\begin{equation*}
\Xi_p^\eta-\Xi_p^0=\widetilde\psi_p^\eta,
\end{equation*}
and
\begin{equation*}
\|\widetilde\psi_p^\eta\|_{C_{\rm per}^{2+\alpha,1+\alpha/2}(\mathbb T_p^{N+1})}
+
|\kappa_p^\eta|
\le C_K|\eta|.
\end{equation*}
In particular,
\begin{equation*}
\|(\widetilde\psi_p^\eta)_r\|_{L^\infty}\le C_K|\eta|.
\end{equation*}

Since
$
(\chi_p^\eta)_z(y,z)
=
(\Xi_p^\eta)_r(y,z-p\cdot y),
$
and
$
(\Xi_p^0)_r(y,r)
=
(\chi_p^0)'(p\cdot y+r)
=
a_p(p\cdot y+r),
$
we have
\begin{equation*}
(\chi_p^\eta)_z(y,z)
=
a_p(z)+(\widetilde\psi_p^\eta)_r(y,z-p\cdot y).
\end{equation*}
By Proposition 2.3, there exist constants $m_K^0,M_K^0>0$ such that
\begin{equation*}
m_K^0\le a_p(z)\le M_K^0
\end{equation*}
for all $p\in K$ and $z\in\mathbb R$.

Choose $\eta_K>0$ smaller if necessary so that $C_K|\eta|\le\frac{m_K^0}{2}
\text{ for }|\eta|\le\eta_K.$

Then
\begin{equation*}
\frac{m_K^0}{2}
\le
(\chi_p^\eta)_z(y,z)
\le
M_K^0+\frac{m_K^0}{2}.
\end{equation*}
Thus we may take
$$
m_K:=\frac{m_K^0}{2},
\qquad
M_K:=M_K^0+\frac{m_K^0}{2}.
$$

Finally, by the bootstrap regularity lemma,
\begin{equation*}
\|\chi_p^\eta-z\|_{C^2(\mathbb T^{N+1})}\le C_K
\end{equation*}
for all $p\in K$ and $|\eta|\le\eta_K$. This completes the proof.
\end{proof}
We next prove the continuity of the effective speed.
\begin{proposition}
The map
$$
p\longmapsto c_\eta(p)
$$
is continuous on every compact set $K\Subset\mathbb R^N$ on which the above correctors are constructed.
\end{proposition}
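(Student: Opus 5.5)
We argue by compactness and use the uniqueness part of Proposition~\ref{estimate}. Fix $p_n\to p_\infty$ in $K$ and $|\eta|\le\eta_K$, and write $Z_n=(\widetilde\psi_n,\kappa_n):=Z_{p_n}^\eta$. It suffices to show that $\kappa_n\to\kappa_{p_\infty}^\eta$. Indeed, $c_0(p)=-\mu(p)$ is continuous by Proposition~\ref{continuity}, and $c_\eta(p)=c_0(p)+\kappa_p^\eta$. Since the limit will be identified independently of the subsequence, it is enough to prove this along a further subsequence.

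\textbf{Step 1 (compactness).} By Proposition~\ref{estimate},
\[
\|\widetilde\psi_n\|_{C^{2+\alpha,1+\alpha/2}}+|\kappa_n|\le R_0|\eta|.
\]
As in the proof of uniform invertibility, we view $\widetilde\psi_n$ as skew-periodic functions on $\mathbb R^{N+1}$. Passing to a subsequence, $\kappa_n\to\kappa_\infty$ and $\widetilde\psi_n\to\widetilde\psi_\infty$ locally in $C^{2+\beta,1+\beta/2}$ for $\beta<\alpha$. Skew-periodicity and the zero-mean normalization pass to the limit through the parametrization $(\theta,\sigma-p_n\cdot\theta)$ of the fundamental domains. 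Lower semicontinuity of Hölder seminorms under local uniform convergence gives
\[
\|\widetilde\psi_\infty\|_{C^{2+\alpha,1+\alpha/2}(\mathbb T_{p_\infty}^{N+1})}+|\kappa_\infty|\le R_0|\eta|,
\]
so $Z_\infty:=(\widetilde\psi_\infty,\kappa_\infty)\in\mathcal B_\eta^{p_\infty}$.

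\textbf{Step 2 (passing to the limit in the equation).} The nonlinear equation $\widetilde{\mathcal N}(p_n,Z_n,\eta)=0$ reads, in the variables $(y,r)$,
\[
(c_0(p_n)+\kappa_n)\,\Xi_r+F(\Xi,D_y\Xi)+\eta W(y,\Xi,D_y\Xi)=\Delta_y\Xi,
\]
with $\Xi=\chi^0_{p_n}(p_n\cdot y+r)+\widetilde\psi_n$. We need $\chi^0_{p_n}\to\chi^0_{p_\infty}$ in $C^2_{\rm loc}$ under the normalization $\chi_p(0)=0$. For $p_\infty\ne0$ this follows from continuous dependence of $v_{p,\mu}$ on $(p,\mu)$ together with $\mu(p_n)\to\mu(p_\infty)$.

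\textbf{Main obstacle: $p_\infty=0$.} Here the cell ODE degenerates, and I would use the estimates from Proposition~\ref{Bound}. They give $\|v_p-V_p\|_\infty\le C|p|^2$ and $|v_p'|\le C$. The explicit algebraic equation \eqref{algebraic} together with $\mu(p)\to\mu(0)$ gives $V_p\to F(\cdot,0)/\mu(0)=v_0$ uniformly. Hence $\chi^0_{p}\to\chi^0_0$ in $C^1$. Convergence of $\chi_p''=v_p'(\chi_p)\chi_p'$ requires $v_p'\to v_0'$. For this I would differentiate the relation $G_p(s,v_p)=|p|^2v_pv_p'$ and repeat the maximum-principle comparison at the level of derivatives, which uses the $C^{3,\alpha}$ regularity of $F$. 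If that is too delicate, the fallback is to note that only $C^1$ convergence of $\chi^0_p$ enters the equation through $F$ and $W$. The second-order terms can then be handled in the weak (distributional) sense, since the $\widetilde\psi_n$ converge in $C^{2+\beta}$ anyway. This is enough to identify the limit equation.

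In either case the limit gives $\widetilde{\mathcal N}(p_\infty,Z_\infty,\eta)=0$ with $Z_\infty\in\mathcal B_\eta^{p_\infty}$.

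\textbf{Step 3 (identification of the limit).} By the uniqueness of the fixed point of the contraction $\mathcal T_{p_\infty}$ in $\mathcal B_\eta^{p_\infty}$ (Proposition~\ref{estimate}), $Z_\infty=Z_{p_\infty}^\eta$, and in particular $\kappa_\infty=\kappa_{p_\infty}^\eta$. Every subsequence therefore has a further subsequence along which $c_\eta(p_n)\to c_\eta(p_\infty)$, which proves continuity on $K$.
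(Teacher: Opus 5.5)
Your proposal is correct and follows the paper's argument: reduce to $\kappa_{p_n}^\eta\to\kappa_p^\eta$, extract a convergent subsequence from the uniform $C^{2+\alpha,1+\alpha/2}$ bound of Proposition~\ref{estimate}, pass to the limit in the cell equation, and conclude by uniqueness of the small solution (the paper extracts in the $(y,z)$ variables on the fixed torus rather than on the skew tori, which is only cosmetic). Your version is more careful than the paper at two points it merely asserts: you use lower semicontinuity of the H\"older norms to check that the limit lies in the ball $\mathcal B_\eta^{p_\infty}$ where uniqueness holds, and you treat convergence of $\chi_p^0$ at $p=0$ explicitly; your fallback there is even simpler than you state, since the only second-order contribution of $\chi^0_{p_n}$ in the $(y,r)$ equation is $|p_n|^2(\chi^0_{p_n})''$, which tends to $0$ by the uniform bound of Proposition~\ref{Bound}.
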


\begin{proof}
Let
$
p_n\to p
\text{ in }K.
$
We prove that
$
c_\eta(p_n)\to c_\eta(p).
$
Write
$$
c_\eta(p_n)=c_0(p_n)+\kappa_{p_n}^\eta.
$$
Since $c_0$ is continuous, it is enough to prove
$
\kappa_{p_n}^\eta\to \kappa_p^\eta.
$

By Proposition \ref{estimate}, the sequence
$
\kappa_{p_n}^\eta
$
is bounded. Moreover, in the variables
$
r=z-p_n\cdot y,
$
the functions
$
\widetilde\psi_{p_n}^\eta
$
are uniformly bounded in the corresponding spaces
$$
C_{\rm per}^{2+\alpha,1+\alpha/2}(\mathbb T_{p_n}^{N+1}).
$$
Returning to the variables $(y,z)$ by
\begin{equation}
\psi_{p_n}^\eta(y,z)
=
\widetilde\psi_{p_n}^\eta(y,z-p_n\cdot y),
\end{equation}
and using the uniform corrector estimates and the bootstrap regularity lemma, we may extract a subsequence, still denoted by $p_n$, such that $\psi_{p_n}^\eta\to \psi_*$
locally uniformly together with the derivatives needed in the cell equation, and $\kappa_{p_n}^\eta\to\kappa_*.$
Since
$
p_n\to p,
$
and since the unperturbed correctors depend continuously on $p$, we have $w_{p_n}^0\to w_p^0$
in $C^2(\mathbb T)$. Hence
\begin{equation}
\chi_{p_n}^\eta(y,z)
=
z+w_{p_n}^0(z)+\psi_{p_n}^\eta(y,z)
\end{equation}
converges to
\begin{equation}
\chi_*(y,z):=z+w_p^0(z)+\psi_*(y,z).
\end{equation}
Passing to the limit in the cell equation for $\chi_{p_n}^\eta$, we obtain
\begin{equation*}
\begin{aligned}
(c_0(p)+\kappa_*)(\chi_*)_z
&+
F\left(\chi_*,D_y\chi_*+p(\chi_*)_z\right)
\\
&+
\eta W\left(y,\chi_*,D_y\chi_*+p(\chi_*)_z\right)
\\
&=
\Delta_y\chi_*
+
2p\cdot D^2_{yz}\chi_*
+
|p|^2(\chi_*)_{zz}.
\end{aligned}
\end{equation*}
Moreover, the normalization is preserved in the limit. Indeed, the change of variables
$$
(y,z)\longmapsto (y,z-p_n\cdot y)
$$
preserves the Lebesgue measure on the torus, and
$
\int_{\mathbb T_{p_n}^{N+1}}\widetilde\psi_{p_n}^\eta=0.
$
Therefore
\begin{equation*}
\int_{\mathbb T^{N+1}}\psi_*=0.
\end{equation*}

Thus the pair
$
(\psi_*,\kappa_*)
$
is a solution of (\ref{Cell}) with parameter $p$ and with the normalization
$
\int_{\mathbb T^{N+1}}\psi_*=0.
$
By the uniqueness of the small solution obtained in Proposition \ref{estimate}, we must have
\begin{equation*}
\psi_*=\psi_p^\eta,
\qquad
\kappa_*=\kappa_p^\eta.
\end{equation*}
Consequently, every convergent subsequence of $\kappa_{p_n}^\eta$ has the same limit $\kappa_p^\eta$. Hence
\begin{equation*}
\kappa_{p_n}^\eta\to\kappa_p^\eta.
\end{equation*}
Therefore
\begin{equation*}
c_\eta(p_n)\to c_\eta(p).
\end{equation*}
This proves the continuity of $c_\eta$ on $K$.
\end{proof}
By Lemma~\ref{lem:initial-trace}, the family $\{u^\epsilon\}$ is locally uniformly bounded on finite time intervals, and the half-relaxed limits satisfy the initial condition. We also record a spatial Lipschitz estimate for the half-relaxed limits.

\begin{lemma}[\textbf{Spatial Lipschitz estimates for the half-relaxed limits}]
 
Denote
\begin{equation*}
u^*:={\limsup}_{\epsilon\to0}^{*}u^\epsilon,
\qquad
u_*:={\liminf}_{\epsilon\to0,*}u^\epsilon.
\end{equation*}
Then there exists a constant $L>0$ such that
\begin{equation*}
|u^*(x+h,t)-u^*(x,t)|\le L|h|,
\end{equation*}
and
\begin{equation*}
|u_*(x+h,t)-u_*(x,t)|\le L|h|,
\end{equation*}
for every $x,h\in\mathbb R^N$ and $t\ge0$. \label{Lipschitz}
\end{lemma}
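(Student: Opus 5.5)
Because the Hamiltonian depends on $x/\epsilon$, the equation is invariant only under lattice translations $x\mapsto x+\epsilon k$, $k\in\mathbb Z^N$. The plan is therefore to prove a uniform Lipschitz-type bound for $u^\epsilon$ modulo an $O(\epsilon)$ error, using comparison over $\epsilon$-lattice shifts and over vertical shifts by multiples of $\epsilon$, and then pass to the half-relaxed limits.

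\textbf{Step 1: invariances of the $\epsilon$-problem.} If $u^\epsilon$ solves the perturbed problem, then for every $k\in\mathbb Z^N$ and $j\in\mathbb Z$ the function
\[
v(x,t):=u^\epsilon(x+\epsilon k,t)+\epsilon j
\]
solves the same equation. This uses the $1$-periodicity of $H$ in $y$ (for the shift $\epsilon k$) and in $s$ (for the shift $\epsilon j$, which changes $u^\epsilon/\epsilon$ by the integer $j$). The initial datum of $v$ is $u_0(x+\epsilon k)+\epsilon j$.

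\textbf{Step 2: comparison of initial data.} Fix $h\in\mathbb R^N$ and choose $k\in\mathbb Z^N$ with $|\epsilon k-h|\le\sqrt N\,\epsilon$. Since $\operatorname{Lip}(u_0)=L_0$,
\[
u_0(x+\epsilon k)\le u_0(x+h)+L_0\sqrt N\,\epsilon\le u_0(x)+L_0|h|+L_0\sqrt N\,\epsilon .
\]
Choose the integer $j=\lceil (L_0|h|+L_0\sqrt N\epsilon)/\epsilon\rceil$, so that $\epsilon j\le L_0|h|+(L_0\sqrt N+1)\epsilon$. Then $u^\epsilon(\cdot,\cdot)+\epsilon j$ is a solution whose initial value dominates that of $u^\epsilon(\cdot+\epsilon k,\cdot)$.

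\textbf{Step 3: apply comparison.} The comparison principle of Appendix A applies to bounded uniformly continuous sub- and supersolutions of the $\epsilon$-problem, and Lemma~\ref{lem:initial-trace} gives boundedness on finite time intervals. It yields
\[
u^\epsilon(x+\epsilon k,t)\le u^\epsilon(x,t)+L_0|h|+C\epsilon
\]
for all $x$ and $t$. The symmetric argument gives the reverse inequality. Using again $|\epsilon k-h|\le\sqrt N\epsilon$ would require continuity of $u^\epsilon$ in $x$ at scale $\epsilon$, which we do not have uniformly. We avoid it by working directly with lattice points: for any $x,x'$, pick $k$ with $x'=x+\epsilon k+e$ and $|e|\le\sqrt N\epsilon$.

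\textbf{Step 4: pass to the half-relaxed limits.} Take $y_\epsilon\to x+h$, $s_\epsilon\to t$ realizing $u^*(x+h,t)=\lim u^\epsilon(y_\epsilon,s_\epsilon)$. Set $x_\epsilon:=y_\epsilon-\epsilon k_\epsilon$ with $k_\epsilon\in\mathbb Z^N$ chosen so that $|x_\epsilon-x|\le\sqrt N\epsilon$; then $x_\epsilon\to x$ and $|\epsilon k_\epsilon|\le|h|+2\sqrt N\epsilon$. Step 3 gives
\[
u^\epsilon(y_\epsilon,s_\epsilon)\le u^\epsilon(x_\epsilon,s_\epsilon)+L_0|h|+C\epsilon .
\]
Taking the limsup and using the definition of $u^*$ yields $u^*(x+h,t)\le u^*(x,t)+L_0|h|$. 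Exchanging the roles of $x$ and $x+h$ gives the Lipschitz bound with $L=L_0$. The same argument with liminf sequences, starting from a sequence realizing $u_*(x,t)$, gives the bound for $u_*$.

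\textbf{Main obstacle.} The delicate point is Step 3: we must ensure that the comparison principle applies to the shifted solutions, with enough growth and continuity control for the class considered in Appendix A. Here the assumption $|W(x,\cdot,p)-W(y,\cdot,p)|\le L(1+|p|)|x-y|$ matters. If comparison requires strict sub-/supersolutions, we will add a small $\delta t$ and let $\delta\to0$. At $t=0$ the claim follows directly from the initial-trace lemma.
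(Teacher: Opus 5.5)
Your proposal is correct and follows the paper's proof: you compare the lattice-shifted solution $u^\epsilon(\cdot+\epsilon k,\cdot)$ with the phase-shifted solution $u^\epsilon+\epsilon\lceil L_0|k|\rceil$ via the comparison principle, then pass to the half-relaxed limits. Your Step 4 spells out the limit passage (taking $x_\epsilon=y_\epsilon-\epsilon k_\epsilon$ so that both points converge) more carefully than the paper, which only takes $\epsilon k_\epsilon\to h$; one harmless correction is that the error term there should read $L_0\epsilon|k_\epsilon|+C\epsilon$, with $\epsilon|k_\epsilon|\to|h|$, rather than $L_0|h|+C\epsilon$.
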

\begin{proof}
 For $k\in\mathbb Z^N$, the function
$
u^\epsilon(x+k\epsilon,t)
$
also solves (\ref{HJ}), with initial data
$
u_0(x+k\epsilon).
$
Since $u_0$ is Lipschitz continuous, there exists a constant $L_0$ such that
\begin{equation*}
u_0(x+k\epsilon)\le u_0(x)+\epsilon L_0|k|.
\end{equation*}
By the comparison principle and the periodicity in the $u^\epsilon/\epsilon$ variable,
\begin{equation*}
u^\epsilon(x+k\epsilon,t)
\le
u^\epsilon(x,t)+\epsilon\lceil L_0|k|\rceil
\le
u^\epsilon(x,t)+\epsilon L_0|k|+\epsilon.
\end{equation*}
Pick a sequence $k_\epsilon\in\mathbb Z^N$ such that
$$
\epsilon k_\epsilon\to h
\quad\text{as }\epsilon\to0.
$$
Then
\begin{equation*}
u^*(x+h,t)\le u^*(x,t)+L_0|h|,
\end{equation*}
and
\begin{equation*}
u_*(x+h,t)\le u_*(x,t)+L_0|h|.
\end{equation*}
Moreover, by replacing $h$ with $-h$, we obtain
\begin{equation*}
|u^*(x+h,t)-u^*(x,t)|\le L_0|h|,
\end{equation*}
and
\begin{equation*}
|u_*(x+h,t)-u_*(x,t)|\le L_0|h|.
\end{equation*}
This proves the lemma.
\end{proof}

\subsection{Homogenization for the perturbative case}

We now prove the homogenization theorem for the perturbed problem.
\begin{result}
Consider
\begin{equation}
\begin{cases}
u_t^\epsilon
+
F\left(\dfrac{u^\epsilon}{\epsilon},Du^\epsilon\right)
+
\eta W\left(
\dfrac{x}{\epsilon},
\dfrac{u^\epsilon}{\epsilon},
Du^\epsilon
\right)
=
\epsilon\Delta u^\epsilon
&\text{in }\mathbb R^N\times(0,\infty),
\\[2mm]
u^\epsilon(x,0)=u_0(x)
&\text{on }\mathbb R^N.
\end{cases}
\end{equation}
Suppose that
$$
u_0\in BUC(\mathbb R^N)\cap W^{1,\infty}(\mathbb R^N).
$$
Let $K_0:=\overline{B_{\operatorname{Lip}(u_0)}(0)}.$
Assume that $F$ satisfies \textnormal{(F1)}--\textnormal{(F4)}, that
\[
W\in C_{\mathrm{loc}}^{3,\alpha}
(\mathbb T^N\times\mathbb T\times\mathbb R^N),
\]
and that there exists $L>0$ such that
\[
|W(x,r,p)-W(y,s,q)|
\leq
L\big(
(1+|p|+|q|)|x-y|
+|r-s|
+|p-q|
\big)
\]
for all $x,y\in\mathbb R^N$, $r,s\in\mathbb R$, and
$p,q\in\mathbb R^N$. Then there exists $\eta_{K_0}>0$ such that, if
\begin{equation}
|\eta|\le\eta_{K_0},
\end{equation}
then
$$
u^\epsilon\to u
$$
locally uniformly, where $u$ is the viscosity solution of
\begin{equation}
\begin{cases}
u_t-c_\eta(Du)=0
&\text{in }\mathbb R^N\times(0,\infty),
\\
u(x,0)=u_0(x)
&\text{on }\mathbb R^N.
\end{cases}
\end{equation}
\end{result}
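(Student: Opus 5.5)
The plan follows the unperturbed proof via Evans' perturbed test function method, now with the $y$-dependent correctors $\chi_p^\eta$ of Proposition \ref{estimate}.

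\textbf{Setup.} By Lemma~\ref{lem:initial-trace}, the half-relaxed limits $u^*$ and $u_*$ are finite and attain the initial datum. By Lemma~\ref{Lipschitz}, they are Lipschitz in $x$ with constant $L_0=\operatorname{Lip}(u_0)$.

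\textbf{Gradients lie in $K_0$.} Let $\phi$ be a test function touching $u^*$ from above at $(x_0,t_0)$. The spatial Lipschitz bound forces $p:=D\phi(x_0,t_0)\in K_0=\overline{B_{L_0}(0)}$. This is where the choice $K=K_0$ and the threshold $\eta_{K_0}$ come from: the corrector $\chi_p^\eta$ exists with uniform bounds $m_{K_0}\le(\chi_p^\eta)_z\le M_{K_0}$ and $\|\chi_p^\eta-z\|_{C^{2}}\le C_{K_0}$, and $c_\eta$ is continuous on $K_0$. The effective Hamiltonian is only needed on $K_0$. For the comparison principle for $u_t-c_\eta(Du)=0$ among Lipschitz functions, I would extend $c_\eta$ continuously outside $K_0$; by the Lipschitz bound this extension is harmless.

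\textbf{Subsolution property.} Assume $u^*-\phi$ has a strict maximum at $(x_0,t_0)$ and set $a=\phi_t(x_0,t_0)$. Use the test function
\[
\Phi^\epsilon_s(x,t)=\epsilon\,\chi_p^\eta\Bigl(\frac{x}{\epsilon},\frac{\phi(x,t)}{\epsilon}+s\Bigr),
\]
which satisfies $\Phi^\epsilon_{s+1}=\Phi^\epsilon_s+\epsilon$ and is within $O(\epsilon)$ of $\phi$. The phase-shift contact Lemma~\ref{phase-shift contact lemma}, in its $y$-dependent form, gives $s_\epsilon$ and maximum points $(x_\epsilon,t_\epsilon)\to(x_0,t_0)$ of $u^\epsilon-\Phi^\epsilon_{s_\epsilon}$ with
\[
u^\epsilon(x_\epsilon,t_\epsilon)/\epsilon=\chi_p^\eta(y_\epsilon,z_\epsilon),\qquad y_\epsilon=x_\epsilon/\epsilon,\quad z_\epsilon=\phi(x_\epsilon,t_\epsilon)/\epsilon+s_\epsilon .
\]

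\textbf{Derivatives and the cell equation.} Compute
\[
D\Phi=D_y\chi+\chi_zD\phi,\qquad \Phi_t=\chi_z\phi_t,
\]
\[
\epsilon\Delta\Phi=\Delta_y\chi+2D\phi\cdot D^2_{yz}\chi+|D\phi|^2\chi_{zz}+\epsilon\chi_z\Delta\phi .
\]
Insert these into the subsolution inequality and subtract the cell equation \eqref{Cell} evaluated at $(y_\epsilon,z_\epsilon)$. This gives
\[
0\ge \chi_z(\phi_t-c_\eta(p))+E_\epsilon ,
\]
where $E_\epsilon$ collects the differences of $F$ and $W$ at the gradients $D_y\chi+\chi_zD\phi$ and $D_y\chi+\chi_zp$, the terms $2(D\phi-p)\cdot D^2_{yz}\chi+(|D\phi|^2-|p|^2)\chi_{zz}$, and $\epsilon\chi_z\Delta\phi$. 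The uniform $C^2$ bound on $\chi_p^\eta-z$, the Lipschitz continuity of $F$ and $W$ in $p$, and $D\phi(x_\epsilon,t_\epsilon)\to p$ give $E_\epsilon\to0$. Since $\chi_z\ge m_{K_0}>0$, we conclude $a\le c_\eta(p)$.

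\textbf{Supersolution and conclusion.} The supersolution property of $u_*$ is symmetric. Comparison for the effective equation (Appendix A) then gives $u^*\le u_*$, hence local uniform convergence.

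\textbf{Main obstacle.} I expect the hardest step to be the phase-shift contact lemma in the $y$-dependent setting: choosing $s_\epsilon$ so that the maximum of $u^\epsilon-\Phi^\epsilon_s$ equals zero exactly, so that $u^\epsilon/\epsilon$ matches the phase of $\chi_p^\eta$ at the contact point. This matching is needed because $F$ and $W$ are evaluated at $u^\epsilon/\epsilon$ rather than at $\chi$ itself. I would obtain it from continuity and monotonicity of $s\mapsto\max(u^\epsilon-\Phi^\epsilon_s)$, using $(\chi_p^\eta)_z>0$ and $\Phi_{s+1}=\Phi_s+\epsilon$, together with the intermediate value theorem. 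A second point to verify is that $(x_\epsilon,t_\epsilon)\to(x_0,t_0)$, which follows because $\Phi^\epsilon_s-\phi=O(\epsilon)$ uniformly in $s$.
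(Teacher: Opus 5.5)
Your proposal is correct and follows the paper's proof essentially step for step: the spatial Lipschitz bound on $u^*,u_*$ places $p$ in $K_0$, the phase-shift contact lemma is applied to $\epsilon\chi_p^\eta(x/\epsilon,\phi/\epsilon+s)$, the cell equation is subtracted at the contact point, and the lower bound $m_{K_0}$ on $(\chi_p^\eta)_z$ yields $a\le c_\eta(p)$. Your remark about extending $c_\eta$ continuously outside $K_0$ before invoking comparison for the effective equation is a small, welcome addition that the paper glosses over; two minor corrections are that monotonicity of $s\mapsto M_\epsilon(s)$ is not needed (continuity plus $M_\epsilon(s+1)=M_\epsilon(s)-\epsilon$ suffices), and the uniform $O(\epsilon)$ bound holds for $\Phi_s^\epsilon-\phi-\epsilon s$ rather than for $\Phi_s^\epsilon-\phi$, which is still enough because the constant $\epsilon s_\epsilon$ does not move the maximizers.
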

\begin{proof}
 The proof is based on the unperturbed argument, with the corrector replacing the one-dimensional profile. We use the following perturbed test function:
\begin{equation*}
\Phi_{s_\epsilon}^\epsilon(x,t)
=
\epsilon
\chi_p^\eta\left(
\frac{x}{\epsilon},
\frac{\phi(x,t)}{\epsilon}+s_\epsilon
\right).
\end{equation*}
We again test the upper half-relaxed limit $u^*$. Suppose that $(x_0,t_0)$ is the maximum point of $u^*-\phi$. Set
$$
a:=\phi_t(x_0,t_0),
\qquad
p:=D\phi(x_0,t_0).
$$
By Lemma \ref{Lipschitz}, the half-relaxed limits are spatially Lipschitz with Lipschitz constant not larger than $L_0:=\operatorname{Lip}(u_0)$. Therefore every gradient $p$ appearing in the viscosity test belongs to
$
K_0=\overline{B_{L_0}(0)}.
$
We apply the corrector construction in Section 3 with this compact set $K_0$.
By Lemma \ref{phase-shift contact lemma}, applied to
$$
\chi(y,z)=\chi_p^\eta(y,z),
$$
there exist $s_\epsilon$ and maximum points $(x_\epsilon,t_\epsilon)\to(x_0,t_0)$ such that $M_\epsilon(s_\epsilon)=0,$
and
\begin{equation*}
\frac{u^\epsilon(x_\epsilon,t_\epsilon)}{\epsilon}
=
\chi_p^\eta\left(
\frac{x_\epsilon}{\epsilon},
\frac{\phi(x_\epsilon,t_\epsilon)}{\epsilon}+s_\epsilon
\right).
\end{equation*}
Then $u^\epsilon-\Phi_{s_\epsilon}^\epsilon$ attains its local maximum at $(x_\epsilon,t_\epsilon)$, and
$
(x_\epsilon,t_\epsilon)\to(x_0,t_0).
$
By the viscosity subsolution test and the cell equation, we obtain
\begin{align*}
0\ge\;&
(\chi_p^\eta)_z\left(\phi_t-c_\eta(p)\right)
\\
&+
\left[
F\left(\chi_p^\eta,D_y\chi_p^\eta+(\chi_p^\eta)_zD\phi\right)
-
F\left(\chi_p^\eta,D_y\chi_p^\eta+(\chi_p^\eta)_zp\right)
\right]
\\
&+
\eta
\left[
W\left(y,\chi_p^\eta,D_y\chi_p^\eta+(\chi_p^\eta)_zD\phi\right)
-
W\left(y,\chi_p^\eta,D_y\chi_p^\eta+(\chi_p^\eta)_zp\right)
\right]
\\
&-
2(D\phi-p)\cdot D^2_{yz}\chi_p^\eta
-
\left(|D\phi|^2-|p|^2\right)(\chi_p^\eta)_{zz}
-
\epsilon(\chi_p^\eta)_z\Delta\phi.
\end{align*}
Letting $\epsilon\to0$, we get
\begin{equation*}
0\ge(\chi_p^\eta)_z(a-c_\eta(p)).
\end{equation*}
If
$$
a>c_\eta(p),
$$
then by the lower bound of $(\chi_p^\eta)_z$,
$
0\ge m_K(a-c_\eta(p))>0,
$
which is a contradiction. Therefore
$$
a\le c_\eta(p).
$$
Thus $u^*$ is a viscosity subsolution.

Similarly, $u_*$ is a viscosity supersolution. By the comparison principle,
$
u^*\le u_*.
$
Since
$
u_*\le u^*,
$
we conclude that
$
u:=u^*=u_*
$
is the viscosity solution of the effective equation, and
$$
u^\epsilon\to u
$$
locally uniformly.
\end{proof}

\section{Large-time average in the general case}

For a general contact-type Hamiltonian, we do not construct correctors in this paper. Nevertheless, the associated cell evolution has a well-defined large-time average under the assumptions below, following the viewpoint of \cite{imbert2008homogenization}.
We impose the following assumptions on the Hamiltonian $H$.

\begin{itemize}
 \item[(H1).] \textbf{Regularity:} The Hamiltonian
\[
H:\mathbb R^N\times\mathbb R\times\mathbb R^N\to\mathbb R
\]
is Lipschitz continuous, and there exists $\gamma\geq0$ such that, for almost every $(y,v,p)\in\mathbb R^N\times\mathbb R\times\mathbb R^N$,
\[
|D_yH(y,v,p)|\leq\gamma(1+|p|),\qquad
|\partial_vH(y,v,p)|\leq\gamma,\qquad
|D_pH(y,v,p)|\leq\gamma.
\]

 \item[(H2).] \textbf{Periodicity:} for any $(y,v, p) \in \mathbb{R}^N \times \mathbb{R} \times \mathbb{R}^N$:
 \[
 H(y+k,v+l, p) = H(y,v, p) \quad \text{for any } l \in \mathbb{Z}, k \in \mathbb{Z}^N;
 \]
\end{itemize}
\subsection{The cell equation from an embedded viewpoint}
For a general contact-type Hamiltonian, the corrector problems constructed in Sections 2 and 3 are not available. We therefore return to the higher-dimensional formulation introduced by Imbert and Monneau~\cite{imbert2008homogenization}.

As explained in \cite{imbert2008homogenization}, the usual ansatz is obstructed by an uncontrolled phase. We first present the formal computation. Suppose $u^\epsilon(x,t) \approx u(x,t) + \epsilon v(y, \tau)$, where $y = \frac{x}{\epsilon}, \tau = \frac{t}{\epsilon}$. Here $\epsilon > 0$ is small. Then
\[
u^\epsilon_t(x,t) \approx u_t(x,t) + v_\tau(y,\tau),
\]
\[
D_x u^\epsilon(x,t) \approx D_x u(x,t) + D_y v(y,\tau),
\]
\[
\Delta_x u^\epsilon(x,t) \approx \Delta_x u(x,t) + \frac{1}{\epsilon}\Delta_y v(y,\tau).
\]
Let $\lambda = u_t(0,0)$ and $p = D_x u(0,0)$. Using the Taylor expansion $u(x,t) \approx u(0,0)+\lambda t+p\cdot x$ and substituting the formal ansatz into (1.1), we obtain
\[
\lambda + v_\tau(y,\tau) + H\left(y, \frac{u(0,0)}{\epsilon} + \lambda \tau + py, p + D_y v(y,\tau)\right) \approx \epsilon \Delta_x u(0,0) + \Delta_y v(y,\tau).
\]
However, when we let $\epsilon \rightarrow 0$, $\frac{u(0,0)}{\epsilon}$ cannot be controlled if $u(0,0) \neq 0$. Therefore the usual ansatz does not yield an appropriate corrector.

Imbert and Monneau introduced an embedding method in one additional space variable. Let $U^\epsilon(x,x_{N+1},t)$ be the viscosity solution of
\begin{equation}
\begin{cases}
U^\epsilon_t + H\left(\frac{x}{\epsilon}, \frac{U^\epsilon}{\epsilon}, D_x U^\epsilon\right) = \epsilon \Delta_x U^\epsilon & \text{in } \mathbb{R}^{N+1} \times (0,\infty), \\
U^\epsilon(x, x_{N+1}, 0) = u_0(x) + x_{N+1} & \text{on } \mathbb{R}^{N+1}.
\end{cases}
\label{eq:5.1}
\end{equation}

\vspace{1cm}

Assume formally that $U^\epsilon$ converges locally uniformly to $U$. Then $U(x, x_{N+1}, 0) = u_0(x) + x_{N+1}$, and $U(X,t) = u(x,t) + x_{N+1}$. We now consider the following ansatz:
\begin{equation*}
U^\epsilon(x, x_{N+1}, t) = U(x, x_{N+1}, t) + \epsilon V\left(\frac{x}{\epsilon}, \frac{U(t, x, x_{N+1}) - \lambda t - px}{\epsilon}, \frac{t}{\epsilon}\right),
\label{eq:5.2}
\end{equation*}
where $\lambda = U_t(0,0,0)$ and $p = D_x U(0,0,0)$. A formal substitution gives
\begin{align*}
 U_t + V_\tau + V_{y_{N+1}} \cdot (U_t - \lambda) &+ H\left(y, V + \lambda \tau + py + y_{N+1}, p + D_y V+ \frac{\partial V}{\partial y_{N+1}}\cdot (D_x U - p)\right) \\
& = \Delta_y V + \frac{\partial^2 V}{\partial y_{N+1}^2} |D_x U - p|^2 + \epsilon \frac{\partial V}{\partial y_{N+1}} \Delta U.
\end{align*}
\bigskip
Here $y_{N+1} := \frac{U(t, x, x_{N+1}) - \lambda t - px}{\epsilon}$. Letting $\epsilon\to0$ and $(x,x_{N+1},t)\to(0,0,0)$ formally yields
\begin{equation*}
\lambda + V_\tau + H(y, V + \lambda \tau + py + y_{N+1}, p + D_y V) = \Delta_y V.
\label{eq:5.3}
\end{equation*}
This motivates the following cell evolution:
\begin{equation*}
\begin{cases}
w_\tau+H(\zeta,p\cdot\zeta+w,p+D_\zeta w)=\Delta_\zeta w,
&(\tau,\zeta)\in(0,\infty)\times\mathbb R^N,\\
w(\zeta,0)=0,
&\zeta\in\mathbb R^N.
\end{cases}
\end{equation*}
This evolution will be used to define the ergodic constant.

\subsection{The ergodic constant}
Following Barles and Souganidis~\cite{barles2001space}, we first establish an oscillation estimate for the cell evolution.
Assumption (H1) implies that 
\begin{equation*}
|H(\zeta,s,q)|\le \chi(|q|)
\quad\text{for all }(\zeta,s,q),
\qquad
\int^\infty \frac{dr}{\chi(r)}=+\infty.
\end{equation*}
In the present setting we may take $\chi$ to be linear.

\begin{proposition}[\textbf{Oscillation estimate}]
Assume that $H$ satisfies (H1)(H2). Consider
\begin{equation*}
\begin{cases}
w_\tau+H(\zeta,p\cdot\zeta+w,p+D_\zeta w)=\Delta_\zeta w,
&(\tau,\zeta)\in(0,\infty)\times\mathbb R^N,\\
w(\zeta,0)=0,
&\zeta\in\mathbb R^N.
\end{cases}
\end{equation*}
Then there exists a constant $C_p>0$, such that
\begin{equation*}
\operatorname{osc}_{\mathbb R^N} w(\cdot,\tau)\le C_p, \quad \text{for each}\quad \tau \ge 0.
\end{equation*}\label{oscillation}
\end{proposition}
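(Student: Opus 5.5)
We obtain the oscillation bound from two facts. Integer translations of $\zeta$ shift $w$ by controlled amounts, by periodicity and comparison. Inside a unit cell, a Bernstein-type gradient estimate or a Barles--Souganidis oscillation barrier controls the variation. We take the comparison principle for the cell evolution (Appendix~A, adapted to $\zeta$-dependent Hamiltonians under (H1)) as available.

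\textbf{Step 1: time-growth bound.} Since $|H(\zeta,s,p)|\le C(1+|p|)=:C_1$ at $D_\zeta w=0$, the constants $\pm C_1\tau$ are a super- and a subsolution. Comparison then gives $|w(\zeta,\tau)|\le C_1\tau$. This is not uniform in $\tau$, but it shows $w$ is finite and that comparison applies in the class of functions with linear growth.

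\textbf{Step 2: integer translations.} Fix $k\in\mathbb Z^N$ and set $m_k:=\lceil p\cdot k\rceil$. Define
\[
\widehat w(\zeta,\tau):=w(\zeta+k,\tau)+p\cdot k-m_k .
\]
By (H2), $p\cdot(\zeta+k)+w(\zeta+k,\tau)=p\cdot\zeta+\widehat w+m_k$, and $H$ is $1$-periodic in its second variable. Hence $\widehat w$ solves the same equation, with initial datum $p\cdot k-m_k\in(-1,0]$. Comparison with $w$ yields
\[
w(\zeta,\tau)-1\le \widehat w(\zeta,\tau)\le w(\zeta,\tau).
\]
Here we use that solutions whose initial data differ by a constant $\delta$ stay ordered within distance $|\delta|$, which follows from the Lipschitz bound $|\partial_vH|\le\gamma$ only up to a factor $e^{\gamma\tau}$. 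To avoid that exponential loss, compare instead with an integer shift: the map $w\mapsto w+1$ maps solutions to solutions by $v$-periodicity. We therefore get
\[
w(\zeta,\tau)-1\le w(\zeta+k,\tau)+p\cdot k-m_k\le w(\zeta,\tau)+1,
\]
and so $|w(\zeta+k,\tau)-w(\zeta,\tau)|\le |p||k|+2$.

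\textbf{Step 3: oscillation inside a cell.} This is the main obstacle. A bound on $\sup_{|\zeta-\zeta'|\le\sqrt N}|w(\zeta,\tau)-w(\zeta',\tau)|$ uniform in $\tau$ combines with Step 2 to control $w(\zeta+k)-w(\zeta)$ modulo the linear part. The linear part $p\cdot k$ itself is not a problem, because it is absorbed back through the same periodic shift argument applied with the function $p\cdot\zeta+w$. The function $p\cdot\zeta+w$ is then $\mathbb Z^N$-quasi-periodic up to integers, so its oscillation over $\mathbb R^N$ modulo the affine part reduces to a local bound.

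For the local bound I would follow Barles--Souganidis~\cite{barles2001space}. The first option is a Bernstein-type interior gradient estimate for the uniformly parabolic equation with Lipschitz Hamiltonian of linear growth $|H|\le\chi(|q|)$. Such an estimate gives $|D_\zeta w(\cdot,\tau)|\le C$ for $\tau\ge1$, uniformly in $\tau$, by working on time windows $[\tau-1,\tau]$ after subtracting $w(\zeta_0,\tau-1)$; the oscillation estimate is not needed as an input because the equation is invariant under integer additive shifts. If that fails, the alternative is to construct a local barrier of the form $\psi(|\zeta-\zeta_0|)$, built by solving an ODE whose existence is guaranteed by $\int^\infty dr/\chi(r)=\infty$. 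I expect making this gradient or barrier estimate independent of $\tau$ to be the delicate point, together with checking that the $v$-dependence of $H$ does not destroy it. Combining Steps 2 and 3 then gives $\operatorname{osc}_{\mathbb R^N}w(\cdot,\tau)\le C_p$.
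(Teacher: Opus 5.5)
Your architecture is the paper's: a time-uniform local bound for $w(\zeta,\tau)-w(\eta,\tau)$ on $|\zeta-\eta|\le R$ from a Barles--Souganidis barrier, combined with integer translations controlled by $v$-periodicity and comparison. As written, however, the proposal does not prove the statement.

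\textbf{Step~2.} The conclusion $|w(\zeta+k,\tau)-w(\zeta,\tau)|\le|p||k|+2$ grows with $|k|$, so no bound on $\operatorname{osc}_{\mathbb R^N}w(\cdot,\tau)$ can follow from it. Your own comparison gives much more. Since $\widehat w(\cdot,0)=p\cdot k-m_k\in(-1,0]$ and both $w$ and $w-1$ are solutions, ordering of the initial data alone yields $w-1\le\widehat w\le w$; no $e^{\gamma\tau}$ issue arises. That is,
\[
-1\le w(\zeta+k,\tau)-w(\zeta,\tau)+(p\cdot k-m_k)\le 0,
\qquad\text{hence}\qquad
|w(\zeta+k,\tau)-w(\zeta,\tau)|\le 1
\]
for every $k\in\mathbb Z^N$. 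This $k$-independent bound is exactly what the paper's global step uses. It also makes your remarks about absorbing $p\cdot k$ through quasi-periodicity of $p\cdot\zeta+w$ unnecessary.

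\textbf{Step~3.} This is the more serious gap: Step~3 carries all the analytic content, and you leave it as a list of options.

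The Bernstein option is circular as you justify it. An interior gradient bound on $B_1\times[\tau-1,\tau]$ for $w_\tau-\Delta w=-H$ with $|H|\le\chi(|Dw|)$ depends on $\sup|w-w(\zeta_0,\tau-1)|$ over a larger cylinder. That is the local oscillation you are trying to bound. Invariance under $w\mapsto w+1$ only normalizes one value, not the spatial spread.

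The barrier option is what the paper actually carries out, and it also settles your worry about the $v$-dependence. By (H1)--(H2), $|H(\zeta,s,q)|\le\chi(|q|):=\sup|H(\cdot,\cdot,0)|+\gamma|q|$ uniformly in $(\zeta,s)$. Take $\psi(0)=0$ and $\psi'=z$, where $z$ solves $4z'=-3\chi(|p|+z)$ with $z(0)$ so large that $z\ge1$ on $[0,R]$. This is possible because $\int^\infty dr/\chi=\infty$, and it gives $4\psi''+2\chi(|p|+\psi')<0$. Now take a maximum of $w(\zeta,\tau)-w(\eta,\tau)-\psi(|\zeta-\eta|)-\alpha(|\zeta|^2+|\eta|^2)-\rho/(T-\tau)$ with $\tau>0$ and $\zeta\ne\eta$. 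There Ishii's lemma gives $\operatorname{tr}X-\operatorname{tr}Y\le4\psi''+C\alpha$. The two Hamiltonian terms differ by at least $-2\chi(|p|+\psi'+o(1))$, whatever their spatial and phase arguments are. Letting $\alpha\to0$ contradicts the barrier inequality.

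\textbf{Boundary maxima.} One point you would still have to supply, which the paper does not spell out, is that the maximum is not attained on $|\zeta-\eta|=R$, where $\psi$ gives no control. This is where the $k$-uniform bound re-enters. Fix $R=\sqrt N+2$ before choosing $z(0)$, so that $\psi(R)-\psi(\sqrt N)\ge2$. Take any pair with $|\zeta-\eta|\ge R$ and replace $\eta$ by $\eta+k$ with $|\zeta-\eta-k|\le\sqrt N$. This shows every such pair falls short of the supremum (computed with $\psi(\min(|\zeta-\eta|,R))$) by at least $1$. Hence the penalized maximizers satisfy $|\zeta-\eta|<R$ for small $\alpha,\rho$. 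So Steps~2 and~3 are not independent pieces combined at the end: the translation bound is needed inside the barrier argument itself.
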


\begin{proof}
Suppose $R>\sqrt N$ and take a function
$
\psi\in C^2((0,R])\cap C([0,R])
$
such that
$
\psi(0)=0,$ $ \psi'>0,$ $ \psi''<0,
$
and
\begin{equation*}
4\psi''(r)+2\chi(|p|+\psi'(r))<0
\quad\text{for }0<r<R.
\end{equation*}
Let
$
z(r)=\psi'(r).
$
Consider
\begin{equation*}
4z'(r)=-3\chi(|p|+z(r)).
\end{equation*}
Since
$
\int^\infty \frac{dr}{\chi(r)}=+\infty,
$
we choose $z(0)$ sufficiently large so that
$$
z(r)>0\quad\text{on }[0,R).
$$
Then
\begin{equation*}
4\psi''+2\chi(|p|+\psi')
=
-\chi(|p|+\psi')<0.
\end{equation*}

\medskip

\noindent\textbf{Local Lipschitz estimate.}
We show that
\begin{equation*}
w(\zeta,\tau)-w(\eta,\tau)\le \psi(|\zeta-\eta|)
\quad\text{for }|\zeta-\eta|\le R.
\end{equation*}
Suppose that
\begin{equation*}
\max_{[0,\infty)\times\mathbb R^{2N}}
\left\{
w(\zeta,\tau)-w(\eta,\tau)-\psi(|\zeta-\eta|)
\right\}>0.
\end{equation*}
Consider
\begin{equation*}
\Phi(\zeta,\eta,\tau)
=
w(\zeta,\tau)-w(\eta,\tau)-\psi(|\zeta-\eta|)
-\alpha(|\zeta|^2+|\eta|^2)
-\frac{\rho}{T-\tau}.
\end{equation*}
Assume that
\begin{equation*}
\max_{[0,T]\times\{|\zeta-\eta|\le R\}}\Phi(\zeta,\eta,\tau)>0.
\end{equation*}
Then $\Phi$ attains its maximum at
$
(\tau_\alpha,\zeta_\alpha,\eta_\alpha).
$
At this maximum point,
$
\tau_\alpha>0,\qquad \zeta_\alpha\ne\eta_\alpha.
$
By Ishii's lemma~\cite{CrandallIshiiLions1992}, there exist $a\in\mathbb R$ and $X,Y\in\mathbb S^N$ such that
\begin{equation*}
\left(
a+\frac{\rho}{(T-\tau_\alpha)^2},
\psi'(r_\alpha)e_\alpha+2\alpha\zeta_\alpha,
X
\right)
\in
\overline J^{2,+}w(\zeta_\alpha,\tau_\alpha),
\end{equation*}
\begin{equation*}
\left(
a,
\psi'(r_\alpha)e_\alpha-2\alpha\eta_\alpha,
Y
\right)
\in
\overline J^{2,-}w(\eta_\alpha,\tau_\alpha),
\end{equation*}
and
\begin{equation*}
\operatorname{tr}X-\operatorname{tr}Y
\le
4\psi''(r_\alpha)+C\alpha,
\end{equation*}
where
$
r_\alpha:=|\zeta_\alpha-\eta_\alpha|,
$ $
e_\alpha:=\frac{\zeta_\alpha-\eta_\alpha}{|\zeta_\alpha-\eta_\alpha|}.
$

Using the subsolution and supersolution inequalities,
\begin{equation*}
a+\frac{\rho}{(T-\tau_\alpha)^2}
+
H\left(
\zeta_\alpha,
p\cdot\zeta_\alpha+w(\zeta_\alpha,\tau_\alpha),
p+\psi'(r_\alpha)e_\alpha+2\alpha\zeta_\alpha
\right)
\le
\operatorname{tr}X,
\end{equation*}
and
\begin{equation*}
a+
H\left(
\eta_\alpha,
p\cdot\eta_\alpha+w(\eta_\alpha,\tau_\alpha),
p+\psi'(r_\alpha)e_\alpha-2\alpha\eta_\alpha
\right)
\ge
\operatorname{tr}Y.
\end{equation*}
Therefore
\begin{align*}
&\frac{\rho}{(T-\tau_\alpha)^2}
+
H\left(
\zeta_\alpha,
p\cdot\zeta_\alpha+w(\zeta_\alpha,\tau_\alpha),
p+\psi'(r_\alpha)e_\alpha+2\alpha\zeta_\alpha
\right)
\\
&\quad
-
H\left(
\eta_\alpha,
p\cdot\eta_\alpha+w(\eta_\alpha,\tau_\alpha),
p+\psi'(r_\alpha)e_\alpha-2\alpha\eta_\alpha
\right)
\\
&\le
\operatorname{tr}X-\operatorname{tr}Y.
\end{align*}
Since
\begin{align*}
&H\left(
\zeta_\alpha,
p\cdot\zeta_\alpha+w(\zeta_\alpha,\tau_\alpha),
p+\psi'(r_\alpha)e_\alpha+2\alpha\zeta_\alpha
\right)
\\
&\quad
-
H\left(
\eta_\alpha,
p\cdot\eta_\alpha+w(\eta_\alpha,\tau_\alpha),
p+\psi'(r_\alpha)e_\alpha-2\alpha\eta_\alpha
\right)
\\
&\ge
-2\chi\left(|p|+\psi'(r_\alpha)+O_\alpha(1)\right),
\end{align*}
letting $\alpha\to0$, we have
\begin{equation*}
0\le 4\psi''(r)+2\chi(|p|+\psi'(r)),
\end{equation*}
which is a contradiction. Therefore
\begin{equation*}
|w(\zeta,\tau)-w(\eta,\tau)|
\le
\psi(|\zeta-\eta|)
\quad\text{when }|\zeta-\eta|\le R.
\end{equation*}

\medskip

\noindent\textbf{Global Lipschitz estimate.}
Pick $\zeta,\eta\in\mathbb R^N$ and $k\in\mathbb Z^N$ such that
\begin{equation*}
|\zeta-(\eta+k)|\le \sqrt N<R.
\end{equation*}
Moreover, by the periodicity in the second variable of $H$,
\begin{equation*}
w(\eta+k,\tau)+p\cdot k-\lfloor p\cdot k\rfloor
\le
w(\eta,\tau)+1.
\end{equation*}
Therefore
\begin{align*}
w(\zeta,\tau)-w(\eta,\tau)
&\le
w(\zeta,\tau)-w(\eta+k,\tau)+1 \le
C_p.
\end{align*}
Finally,
\begin{equation*}
\operatorname{osc}_{\mathbb R^N}w(\cdot,\tau)\le C_p
\quad\text{for all }\tau\ge0.
\end{equation*}
\end{proof}

The oscillation estimate yields the following large-time behavior.

\begin{result}\label{large time}
Assume that $H$ satisfies (H1)(H2). Consider
\begin{equation*}
\begin{cases}
w_\tau+H(\zeta,p\cdot\zeta+w,p+Dw)=\Delta w
&\text{in }\mathbb R^N \times(0,\infty),\\
w(\zeta,0)=0
&\text{on }\mathbb R^N.
\end{cases}
\end{equation*}
Then
\begin{equation*}
\frac{w(\zeta,\tau)}{\tau}\to \lambda(p)
\quad\text{as }\tau\to+\infty \quad \text{locally uniformly}.
\end{equation*}
Moreover, there exists a constant $C>0$, independent of $\zeta$ and $\tau$, such that
\[
|w(\zeta,\tau)-\lambda(p)\tau|\le C
\]
for all $(\zeta,\tau)\in\mathbb R^N\times(0,\infty)$.
\end{result}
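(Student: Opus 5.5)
The plan is to establish subadditivity (up to a bounded error) of the quantities
\[
M(\tau):=\sup_{\zeta}w(\zeta,\tau),\qquad m(\tau):=\inf_\zeta w(\zeta,\tau),
\]
and then combine this with the oscillation estimate of Proposition~\ref{oscillation}. The key structural fact is that the equation is invariant under integer shifts of the unknown: if $w$ solves the cell evolution, then so does $w+l$ for $l\in\mathbb Z$, by (H2) in the $v$-variable. It is also invariant under time translation. We will use the comparison principle for the cell evolution, which is available by (H1) and the Lipschitz bounds as in Appendix~A, applied to bounded-oscillation solutions. If needed, we first subtract a constant to reduce to bounded functions, or work with $w-m(\tau)$.

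\textbf{Step 1 (almost subadditivity).} Fix $\sigma>0$. At time $\sigma$ we have $w(\cdot,\sigma)\le M(\sigma)\le \lceil M(\sigma)\rceil$. The function $\tau\mapsto w(\cdot,\tau)+\lceil M(\sigma)\rceil$ solves the equation with initial datum $\lceil M(\sigma)\rceil\ge w(\cdot,\sigma)$. Comparison on $[\sigma,\sigma+\tau]$ then gives
\[
w(\zeta,\sigma+\tau)\le w(\zeta,\tau)+\lceil M(\sigma)\rceil ,
\]
hence $M(\sigma+\tau)\le M(\tau)+M(\sigma)+1$. Symmetrically, $m(\sigma+\tau)\ge m(\tau)+m(\sigma)-1$.

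\textbf{Step 2 (limits).} The sequence $M+1$ is subadditive and $m-1$ is superadditive. Moreover $|M(\tau)|\le C\tau$, since constants $\pm\|H(\cdot,\cdot,p)\|_\infty\tau$ are super- and subsolutions, using $|H(\zeta,s,p)|\le\chi(|p|)$. Fekete's lemma, in its continuous version (continuity of $M$ in $\tau$ follows from the same barrier bounds applied over short time intervals), yields
\[
\lim_{\tau\to\infty} \frac{M(\tau)}{\tau}=\inf_{\tau}\frac{M(\tau)+1}{\tau}=:\lambda^+,\qquad \lim_{\tau\to\infty}\frac{m(\tau)}{\tau}=\sup_\tau\frac{m(\tau)-1}{\tau}=:\lambda^-.
\]
Proposition~\ref{oscillation} gives $0\le M-m\le C_p$, so $\lambda^+=\lambda^-=:\lambda(p)$. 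Since $m\le w\le M$, the convergence $w/\tau\to\lambda(p)$ is uniform in $\zeta$, which is stronger than local uniformity.

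\textbf{Step 3 (bounded error).} The inf/sup characterizations give $M(\tau)+1\ge \lambda\tau$ and $m(\tau)-1\le\lambda\tau$ for all $\tau$. Therefore
\[
\lambda\tau-1\le M(\tau)\le m(\tau)+C_p\le \lambda\tau+1+C_p,
\]
and similarly $m(\tau)\ge \lambda\tau-1-C_p$. Hence $|w-\lambda\tau|\le C_p+1$.

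The main obstacle is Step 1. It requires a comparison principle for the cell evolution on $\mathbb R^N$ with possibly unbounded data: $w$ is bounded on bounded time intervals, but the Hamiltonian contains $p\cdot\zeta$, which is unbounded in $\zeta$. I would handle this with the standard doubling argument under (H1), using that $w(\cdot,\tau)$ is bounded for each $\tau$. Boundedness holds because the oscillation is bounded and $|w|\le C\tau$ at the point $\zeta=0$. The unbounded $p\cdot\zeta$ enters only through the Lipschitz bound $|\partial_vH|\le\gamma$, which produces a Gronwall factor $e^{\gamma\tau}$ and does not spoil the ordering.
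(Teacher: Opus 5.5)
Your argument is correct, but it is organized differently from the paper's.

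The paper works only with $m(\tau)=\inf_\zeta w(\zeta,\tau)$ and uses the oscillation bound of Proposition~\ref{oscillation} already inside the comparison step. From $n_s\le w(\cdot,s)\le n_s+N_p$ with $n_s=\lfloor m(s)\rfloor$, it compares $w(\cdot,s+\cdot)$ with both $w+n_s$ and $w+n_s+N_p$. This gives the two-sided bound $m(\tau)+m(s)-1\le m(s+\tau)\le m(\tau)+m(s)+N_p$. It then applies a Hyers-type stabilization lemma (Lemma~\ref{almost additive}, built from the dyadic limits $2^{-n}f(2^nt)$), which produces $\lambda(p)$ and the bound $|m(\tau)-\lambda\tau|\le\max(1,N_p)$ in a single step.

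You instead compare only one-sidedly, and that step needs no oscillation bound at all. Integer-shift and time-translation invariance alone make $M+1$ subadditive and $m-1$ superadditive, so the continuous Fekete lemma gives an upper and a lower speed. The oscillation estimate enters only to identify these two speeds. The $O(1)$ error then follows directly from the $\inf$/$\sup$ characterizations of the Fekete limits.

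What each route buys: yours is more modular. It isolates exactly where Proposition~\ref{oscillation} is needed, namely to make the sup-speed and inf-speed coincide, and it relies only on a textbook lemma. The paper's route packages existence of the limit and the bounded-error estimate into one self-contained lemma.

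One minor point. For the continuous Fekete lemma you only need $M$ to be bounded above on compact subintervals of $(0,\infty)$ and $\inf_{\tau>0}(M(\tau)+1)/\tau>-\infty$. Both follow from the barrier bound $|M(\tau)|\le C\tau$, so continuity of $M$ is unnecessary. Moreover, the barriers alone would not give that continuity; you would need the $L^\infty$-contraction estimate with the factor $e^{\gamma\tau}$.
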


Before we prove our result, we need to introduce a lemma.

\begin{lemma}[\textbf{Almost additive functions}]
Let $f:[0,\infty)\to\mathbb R$ be locally bounded. Assume that there exists a constant $C_0>0$ such that
\begin{equation*}
|f(t+s)-f(t)-f(s)|\le C_0
\qquad\text{for all }t,s\ge0.
\end{equation*}
Then there exists a constant $\lambda\in\mathbb R$ such that
\begin{equation*}
\lim_{t\to+\infty}\frac{f(t)}{t}=\lambda.
\end{equation*}
Moreover,
\begin{equation*}
|f(t)-\lambda t|\le C_0
\qquad\text{for all }t\ge0.
\end{equation*} \label{almost additive}
\end{lemma}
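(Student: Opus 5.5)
The plan is the standard Fekete-type argument for almost additive functions: sample $f$ along dyadic times, show that $f(2^n t)/(2^n t)$ is Cauchy with an explicit rate, identify the limit $\lambda$, and then obtain the bound $|f(t)-\lambda t|\le C_0$ directly rather than through a subadditive limit.

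First, taking $s=t$ in the hypothesis gives $|f(2t)-2f(t)|\le C_0$ for every $t\ge0$. Dividing by $2t$ yields
\begin{equation*}
\left|\frac{f(2t)}{2t}-\frac{f(t)}{t}\right|\le \frac{C_0}{2t}.
\end{equation*}
Iterating along $t,2t,4t,\dots$, for $t>0$ the sequence $g_n(t):=f(2^nt)/(2^nt)$ satisfies $|g_{n+1}(t)-g_n(t)|\le C_0/(2^{n+1}t)$. Hence $g_n(t)$ is Cauchy and converges to some $\lambda(t)$, with
\begin{equation*}
\left|\lambda(t)-\frac{f(t)}{t}\right|\le \sum_{n\ge0}\frac{C_0}{2^{n+1}t}=\frac{C_0}{t},
\qquad\text{that is,}\qquad
|f(t)-\lambda(t)t|\le C_0 .
\end{equation*}

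Next I show that $\lambda(t)$ does not depend on $t$. Define $h_t(\tau):=f(\tau)-\lambda(t)\tau$; this function is also almost additive with the same constant $C_0$. For two values $t,t'>0$, the function $h_t-h_{t'}=(\lambda(t')-\lambda(t))\tau$ is exactly linear. I would like to say it is bounded, which would force the slopes to agree, but we only know boundedness of $h_t$ on the single orbit $\{2^nt\}$.

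The key step is therefore to extend the bound on $h_t$ to all $\tau$. Induction on the hypothesis gives $|h(m\tau)-mh(\tau)|\le (m-1)C_0$, so $|h(m\tau)/m - h(\tau)|\le C_0$, and hence along the orbit $\lim_n h_t(2^nt)/2^n=0$. To handle arbitrary $\tau$, set $\lambda:=\lambda(1)$, so the dyadic limit of $f(2^n)/2^n$ is $\lambda$. For large $s$, write $s=k+\theta$ with $k=\lfloor s\rfloor$ and $\theta\in[0,1)$. Then
\begin{equation*}
|f(s)-f(k)-f(\theta)|\le C_0,\qquad |f(k)-kf(1)|\le (k-1)C_0,
\end{equation*}
and local boundedness controls $f(\theta)$. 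This shows $f(s)/s$ stays bounded. Taking a limit point of $f(s)/s$ and using $|f(ms)-mf(s)|\le mC_0$ shows all limit points coincide with the common value of $\lambda(t)$, computed along integer multiples. Concretely, comparing $f(ms)$ with $f(\lfloor ms\rfloor)$ gives $|f(ms)/(ms)-f(\lfloor ms\rfloor)/\lfloor ms\rfloor|\to0$, so every dyadic limit $\lambda(t)$ equals the integer-sequence limit $\lim_k f(k)/k$. That integer limit exists by the same Cauchy estimate applied to $f(k)/k$ via almost additivity.

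Once $\lambda(t)\equiv\lambda$ is established, the estimate $|f(t)-\lambda t|\le C_0$ from the first paragraph gives both conclusions at once. I expect the main obstacle to be the identification of $\lambda(t)$ as independent of $t$, i.e.\ passing from dyadic sequences to the full limit $t\to\infty$; local boundedness of $f$ enters exactly at that point.
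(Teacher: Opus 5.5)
Your proof is correct in substance. It uses the same dyadic sequence and gets the same sharp bound $|f(t)-\lambda t|\le C_0$ as the paper, but it settles the key step, that $\lambda(t)$ does not depend on $t$, by a different route.

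\textbf{The paper's route.} It sets $A(t):=\lim_n 2^{-n}f(2^nt)$, which is your $\lambda(t)t$. Rescaling the hypothesis by $2^{-n}$ shows $A$ is exactly additive. It then invokes the fact that a locally bounded additive function on $[0,\infty)$ is linear. Local boundedness is used only to rule out pathological solutions of Cauchy's equation.

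\textbf{Your route.} You avoid the functional equation. You compare $f(s)/s$ with $f(\lfloor s\rfloor)/\lfloor s\rfloor$, which is where local boundedness on $[0,1)$ enters. This shows that once $\lim_k f(k)/k$ exists, the full limit $\lim_{s\to\infty}f(s)/s$ exists, so every dyadic limit $\lambda(t)$ equals it.

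\textbf{Comparison.} Your argument is more hands-on and produces the limit along all $s\to\infty$ directly. The paper's argument is shorter and more structural. Your detour through $h_t$ and $h_t-h_{t'}$ plays no role and can be dropped.

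\textbf{One step needs a correct justification.} The existence of $\lim_k f(k)/k$ does not follow from ``the same Cauchy estimate''. The doubling estimate only gives convergence along each orbit $\{2^nk_0\}$, producing a value $\lambda(k_0)$ for each integer $k_0$. That just restates the independence question on the integers.

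You can close this with the multiplicative estimate you already wrote down.
\begin{itemize}
\item Apply $|f(m\tau)-mf(\tau)|\le (m-1)C_0$ with $\tau=2^nt$ and divide by $m2^nt$. This gives $\lambda(mt)=\lambda(t)$ for every integer $m\ge1$.
\item Hence $\lambda(k)=\lambda(1)$ for all $k\in\mathbb N$.
\item Your first estimate then gives $|f(k)-\lambda(1)k|\le C_0$, so $f(k)/k\to\lambda(1)$.
\end{itemize}
Alternatively, apply Fekete's lemma to the subadditive sequence $f(k)+C_0$.

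Finally, your dyadic estimate was stated only for $t>0$. The case $t=0$ is covered by $|f(0)|\le C_0$, which is the hypothesis with $t=s=0$.
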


\begin{proof}
For each $t\ge0$, define
\begin{equation*}
A(t):=\lim_{n\to\infty}2^{-n}f(2^n t),
\end{equation*}
provided that the limit exists. We first show that this limit is well-defined.

For $n\ge0$, by the almost-additivity assumption with $s=2^n t$, we have $\left|f(2^{n+1}t)-2f(2^n t)\right|\le C_0.$
Hence
\begin{align*}
\left|
2^{-(n+1)}f(2^{n+1}t)-2^{-n}f(2^n t)
\right|
&=
2^{-(n+1)}
\left|
f(2^{n+1}t)-2f(2^n t)
\right|
\\
&\le
2^{-(n+1)}C_0.
\end{align*}
Therefore the sequence
$
\left\{2^{-n}f(2^n t)\right\}_{n\ge0}
$
is Cauchy. Thus $A(t)$ is well-defined for every $t\ge0$. Moreover,
\begin{align*}
|A(t)-f(t)|
&\le
\sum_{n=0}^{\infty}
\left|
2^{-(n+1)}f(2^{n+1}t)-2^{-n}f(2^n t)
\right|
\\
&\le
\sum_{n=0}^{\infty}2^{-(n+1)}C_0
=
C_0.
\end{align*}
Thus
\begin{equation*}
|A(t)-f(t)|\le C_0
\qquad\text{for all }t\ge0.
\end{equation*}

Next, we prove that $A$ is additive. For $t,s\ge0$, by the almost-additivity assumption,
\begin{equation*}
\left|
f(2^n(t+s))-f(2^n t)-f(2^n s)
\right|
\le C_0.
\end{equation*}
Multiplying by $2^{-n}$ and letting $n\to\infty$, we obtain
\begin{equation*}
A(t+s)=A(t)+A(s)
\qquad\text{for all }t,s\ge0.
\end{equation*}

Since $f$ is locally bounded and $|A-f|\le C_0$, the function $A$ is also locally bounded. A locally bounded additive function on $[0,\infty)$ is linear. Indeed, if we set
$
\lambda:=A(1),
$
then additivity gives
$
A(q)=\lambda q
\text{ for all }q\in\mathbb Q_+.
$
Local boundedness implies continuity of $A$ at $0$. Therefore, by density of $\mathbb Q_+$ in $[0,\infty)$,
\begin{equation*}
A(t)=\lambda t
\qquad\text{for all }t\ge0.
\end{equation*}

Combining this with $|A(t)-f(t)|\le C_0$, we obtain
\begin{equation*}
|f(t)-\lambda t|\le C_0
\qquad\text{for all }t\ge0.
\end{equation*}
In particular,
\begin{equation*}
\lim_{t\to+\infty}\frac{f(t)}{t}=\lambda.
\end{equation*}
This completes the proof.
\end{proof}

We now complete the proof of Theorem \ref{large time}.

\begin{proof}[Proof of Theorem \ref{large time}]
Let
$$
m(\tau):=\inf_{\zeta\in\mathbb R^N}w(\zeta,\tau),
\qquad
M(\tau):=\sup_{\zeta\in\mathbb R^N}w(\zeta,\tau).
$$
Then by Proposition \ref{oscillation},
$$
M(\tau)-m(\tau)\le C_p,
$$
and
$$
m(s)\le w(\zeta,s)\le m(s)+C_p.
$$
Therefore there exist an integer $N_p$ and
$
n_s=\lfloor m(s)\rfloor
$
such that
\begin{equation*}
n_s\le w(\zeta,s)\le n_s+N_p
\quad\text{for all }\zeta\in\mathbb R^N.
\end{equation*}
By the comparison principle, we have
\begin{equation*}
w(\zeta,\tau)+n_s
\le
w(\zeta,s+\tau)
\le
w(\zeta,\tau)+n_s+N_p.
\end{equation*}
Taking the infimum with respect to $\zeta$, we obtain
\begin{equation*}
m(\tau)+n_s
\le
m(s+\tau)
\le
m(\tau)+n_s+N_p.
\end{equation*}
Moreover,
\begin{equation*}
m(\tau)+m(s)-1
\le
m(s+\tau)
\le
m(\tau)+m(s)+N_p.
\end{equation*}
Therefore $m$ is an almost-additive function. By Lemma \ref{almost additive}, there exists $\lambda(p)$ such that
\begin{equation*}
|w(\zeta,\tau)-\lambda(p)\tau|\le C.
\end{equation*}
\end{proof}

\subsection{Identification of the cell speed with the ergodic constant}

We now identify the corrector speed obtained from (\ref{Cell}) with the ergodic constant defined by the large-time average in Theorem 6.

Suppose that $H(y,s,p)=F(s,p)+\eta W(y,s,p)$, and we assume that $F$ satisfies \textup{(F1)}--\textup{(F4)}, and
$$
W\in C^{3,\alpha}(\mathbb T^N\times\mathbb T\times\mathbb R^N).
$$
Assume that, for each $p\in\mathbb R^N$, there exist $c_\eta(p)\in\mathbb R$ and a corrector $\chi_p^\eta$ satisfying
\[
\chi_p^\eta(y+k,z)=\chi_p^\eta(y,z),
\qquad
\chi_p^\eta(y,z+1)=\chi_p^\eta(y,z)+1,
\]
and
\begin{equation*}
\begin{aligned}
c_\eta(p)(\chi_p^\eta)_z
&+
F\left(\chi_p^\eta,D_y\chi_p^\eta+p(\chi_p^\eta)_z\right)
\\
&+
\eta W\left(y,\chi_p^\eta,D_y\chi_p^\eta+p(\chi_p^\eta)_z\right)
\\
&=
\Delta_y\chi_p^\eta
+
2p\cdot D^2_{yz}\chi_p^\eta
+
|p|^2(\chi_p^\eta)_{zz}.
\end{aligned}
\end{equation*}
For fixed $p\in\mathbb R^N$, consider the cell evolution
\begin{equation*}
\begin{cases}
w_\tau
+
F\left(p\cdot \zeta+w,p+D_\zeta w\right)
\\
\qquad
+
\eta W\left(\zeta,p\cdot \zeta+w,p+D_\zeta w\right)
=
\Delta_\zeta w,
&(\tau,\zeta)\in(0,\infty)\times\mathbb R^N,
\\[1mm]
w(\zeta,0)=0,
&\zeta\in\mathbb R^N.
\end{cases}
\end{equation*}

\begin{result}
For every fixed $p\in\mathbb R^N$, the solution $w=w(\zeta,\tau)$ of the above cell evolution satisfies
\begin{equation*}
\frac{w(\zeta,\tau)}{\tau}\to c_\eta(p)
\quad\text{as }\tau\to+\infty,
\end{equation*}
locally uniformly in $\zeta$. In particular, the ergodic constant defined by the long-time average is equal to the cell speed $c_\eta(p)$.
\end{result}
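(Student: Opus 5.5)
Our plan is to build explicit sub- and supersolutions of the cell evolution out of the corrector $\chi_p^\eta$ and then use the comparison principle (Appendix~A). The natural candidate is the travelling-front function
\[
\Psi_s(\zeta,\tau):=\chi_p^\eta\bigl(\zeta,\,c_\eta(p)\tau+s\bigr)-p\cdot\zeta,\qquad s\in\mathbb R,
\]
that is, we set $z=c_\eta(p)\tau+s$ and $y=\zeta$. We first check by the chain rule that $\Psi_s$ solves the cell evolution exactly. Write $\chi=\chi_p^\eta$ and evaluate at $(\zeta,c_\eta\tau+s)$. Then
\[
\partial_\tau\Psi_s=c_\eta\chi_z,\qquad p+D_\zeta\Psi_s=D_y\chi,\qquad p\cdot\zeta+\Psi_s=\chi.
\]
The mismatch is that the cell equation contains the combination $D_y\chi+p\chi_z$ and the operator $\Delta_y+2p\cdot D^2_{yz}+|p|^2\partial_{zz}$. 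So the correct ansatz must use the skewed variable.

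Accordingly, we take instead
\[
\Psi_s(\zeta,\tau):=\chi\bigl(\zeta,\,p\cdot\zeta+c_\eta\tau+s\bigr)-p\cdot\zeta .
\]
Now $p\cdot\zeta+\Psi_s=\chi$ and $p+D_\zeta\Psi_s=D_y\chi+p\chi_z$. Moreover $\Delta_\zeta\Psi_s=\Delta_y\chi+2p\cdot D^2_{yz}\chi+|p|^2\chi_{zz}$ and $\partial_\tau\Psi_s=c_\eta\chi_z$. Substituting into the cell equation shows that $\Psi_s$ is a classical solution of the cell evolution for every $s$. It is defined for all $\tau\ge0$, is $C^2$ by the regularity of the corrector, and has the growth needed for comparison, because $\chi(y,z)-z$ is bounded.

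Next we compare $\Psi_s$ with $w$. Since $\chi(y,z)=z+w_p^\eta(y,z)$ with $w_p^\eta$ bounded by some $C_1$, we get
\[
\bigl|\Psi_s(\zeta,\tau)-c_\eta\tau-s\bigr|\le C_1 .
\]
Choosing $s=-C_1$ gives $\Psi_s(\cdot,0)\le 0=w(\cdot,0)$, and choosing $s=C_1$ gives $\Psi_s(\cdot,0)\ge0$. The comparison principle for the cell evolution then yields
\[
c_\eta\tau-2C_1\le w(\zeta,\tau)\le c_\eta\tau+2C_1
\]
for all $\zeta$ and $\tau$. Dividing by $\tau$ gives $w(\zeta,\tau)/\tau\to c_\eta(p)$ uniformly in $\zeta$, which is stronger than local uniform convergence. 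Combined with Theorem~\ref{large time}, this also identifies $\lambda(p)=c_\eta(p)$. As an alternative to choosing $s$, one could use the integer-shift invariance $\chi(y,z+1)=\chi(y,z)+1$ together with the $\mathbb Z$-periodicity of $H$ in $s$, but the bounded-perturbation argument already suffices.

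The main obstacle is justifying comparison for the cell evolution on all of $\mathbb R^N$, since solutions are unbounded because of the $p\cdot\zeta$ dependence inside $H$. We handle this by noting that both $w$ and $\Psi_s$ have bounded oscillation in $\zeta$ on finite time intervals (Proposition~\ref{oscillation}) and that $H$ satisfies (H1) with $|\partial_vH|\le\gamma$. The comparison then follows from the standard doubling-variables argument with the $e^{\gamma\tau}$ weight, as in Appendix~A. A second point to check is that, for the particular $p$, the corrector exists, which requires $|\eta|\le\eta_K$ for a compact $K\ni p$. The statement assumes this existence, so we only invoke it.
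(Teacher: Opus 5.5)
Your argument is correct and is essentially the paper's proof. The paper compares $w$ with the same travelling-front solution $\chi_p^\eta(\zeta,p\cdot\zeta+c_\eta(p)\tau)-p\cdot\zeta$ through the comparison principle, and the only difference is how the bounded initial datum is sandwiched: the paper uses the integer vertical shifts $w\pm M_p$ (periodicity of $H$ in the phase), while you use the real phase shifts $s=\pm C_1$ (translation invariance of the cell equation in $z$), which is exactly the alternative you mention.

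One small correction: neither $w$ nor $\Psi_s$ is unbounded in $\zeta$ on finite time intervals. What makes comparison work is that $(\zeta,r,q)\mapsto H(\zeta,p\cdot\zeta+r,p+q)$ still satisfies the Lipschitz structure condition of Appendix A, so Appendix A applies directly.
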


\begin{proof}
Define
\begin{equation*}
\widetilde w(\zeta,\tau)
:=
\chi_p^\eta\left(\zeta,p\cdot\zeta+c_\eta(p)\tau\right)
-
p\cdot\zeta.
\end{equation*}
We first verify that $\widetilde w$ solves the same cell evolution.

Set $z=p\cdot\zeta+c_\eta(p)\tau.$
Then $\widetilde w_\tau
=
c_\eta(p)(\chi_p^\eta)_z,$
and $p+D_\zeta\widetilde w
=
D_y\chi_p^\eta+p(\chi_p^\eta)_z.$
Moreover, $p\cdot\zeta+\widetilde w
=
\chi_p^\eta(\zeta,z).$
Finally,
\[
\Delta_\zeta \widetilde w
=
\Delta_y\chi_p^\eta
+
2p\cdot D^2_{yz}\chi_p^\eta
+
|p|^2(\chi_p^\eta)_{zz}.
\]
Therefore, by the cell equation satisfied by $\chi_p^\eta$, we obtain
\[
\begin{aligned}
\widetilde w_\tau
&+
F\left(p\cdot \zeta+\widetilde w,p+D_\zeta\widetilde w\right)
\\
&+
\eta W\left(\zeta,p\cdot \zeta+\widetilde w,p+D_\zeta\widetilde w\right)
=
\Delta_\zeta \widetilde w.
\end{aligned}
\]
Thus $\widetilde w$ is a solution of the cell evolution, with initial data
\begin{equation*}
\widetilde w(\zeta,0)
=
\chi_p^\eta(\zeta,p\cdot\zeta)-p\cdot\zeta.
\end{equation*}

Since $\chi_p^\eta(y,z)=z+w_p^\eta(y,z)$
with $w_p^\eta$ periodic in $(y,z)$, the function $\widetilde w(\zeta,0)$ is bounded. Hence there exists an integer $M_p\in\mathbb N$ such that
\begin{equation*}
-M_p\le \widetilde w(\zeta,0)\le M_p
\quad\text{for all }\zeta\in\mathbb R^N.
\end{equation*}

Let $w$ be the solution of the cell evolution with initial data $0$. Since $F$ and $W$ are periodic in the phase variable, if $u$ is a solution, then $u+m$ is also a solution for every $m\in\mathbb Z$. Therefore $w-M_p$ and $w+M_p$ are also solutions of the same cell equation. At $\tau=0$, we have
\[
w(\zeta,0)-M_p
\le
\widetilde w(\zeta,0)
\le
w(\zeta,0)+M_p.
\]
By the comparison principle,
\begin{equation*}
w(\zeta,\tau)-M_p
\le
\widetilde w(\zeta,\tau)
\le
w(\zeta,\tau)+M_p
\end{equation*}
for all $\tau\ge0$ and $\zeta\in\mathbb R^N$. Hence
\begin{equation*}
|w(\zeta,\tau)-\widetilde w(\zeta,\tau)|\le M_p. \label{4.48}
\end{equation*}

On the other hand, since $\chi_p^\eta(y,z)=z+w_p^\eta(y,z),$
we have $\widetilde w(\zeta,\tau)
=
c_\eta(p)\tau
+
w_p^\eta\left(\zeta,p\cdot\zeta+c_\eta(p)\tau\right).$
The function $w_p^\eta$ is bounded. Therefore
\begin{equation*}
\left|
\frac{\widetilde w(\zeta,\tau)}{\tau}
-
c_\eta(p)
\right|
\le
\frac{\|w_p^\eta\|_{L^\infty}}{\tau}.
\end{equation*}
Combining this estimate with (\ref{4.48}),
we obtain
\begin{equation*}
\left|
\frac{w(\zeta,\tau)}{\tau}
-
c_\eta(p)
\right|
\le
\frac{M_p+\|w_p^\eta\|_{L^\infty}}{\tau}.
\end{equation*}
Letting $\tau\to+\infty$, we conclude that
\begin{equation*}
\frac{w(\zeta,\tau)}{\tau}\to c_\eta(p).
\end{equation*}
The convergence is locally uniform in $\zeta$, and in fact uniform in $\zeta$.
\end{proof}

\section{Examples}
In this section we provide examples satisfying the assumptions above.
\begin{example}[Semilinear heat equation]
 Let 
 \begin{equation*}
 F(s,q)=g(s), \quad s\in \mathbb R,
 \end{equation*}
 where $g\in C^{3,\alpha}(\mathbb T)$ and has a positive lower bound. Then $F$ satisfies \textup{(F1)--(F4)}.
\end{example}
\begin{example}[An admissible unperturbed Hamiltonian]
Let
\begin{equation*}
F(s,q)
=
4+\sqrt{1+|q|^2}
+
\frac12\,\frac{\sin(2\pi s)}{\sqrt{1+|q|^2}},
\qquad
(s,q)\in\mathbb R\times\mathbb R^N.
\end{equation*}
Then $F$ satisfies \textup{(F1)--(F4)}.

Indeed, $F$ is $1$-periodic in $s$ and is smooth in $(s,q)$. Moreover,
\begin{equation*}
F(s,q)
\ge
4+1-\frac12
=
\frac92>0.
\end{equation*}
Thus \textup{(F1)} holds.

Next, we compute the recession function. For $\rho>0$,
\begin{align*}
\frac{F(s,\rho q)}{\rho}
&=
\frac4\rho
+
\frac{\sqrt{1+\rho^2|q|^2}}{\rho}
+
\frac{1}{2\rho}\frac{\sin(2\pi s)}{\sqrt{1+\rho^2|q|^2}}.
\end{align*}
Since
$$
\frac{\sqrt{1+\rho^2|q|^2}}{\rho}
=
\sqrt{|q|^2+\frac1{\rho^2}}
\to |q|
$$
locally uniformly in $q$, and the other two terms converge to $0$ uniformly in $s$, we obtain
\begin{equation*}
F_\infty(q)=|q|.
\end{equation*}
Hence \textup{(F3)} holds.

We now verify the strict radial subhomogeneity condition. Let
$$
R:=\sqrt{1+\rho^2|q|^2},
\qquad
S:=\sin(2\pi s).
$$
Then
$$
F(s,\rho q)=4+R+\frac12\frac{S}{R}.
$$
A direct computation gives
\begin{equation*}
\rho D_qF(s,\rho q)\cdot q-F(s,\rho q)
=
-4-\frac1R
-\frac{S}{2}\frac{2R^2-1}{R^3}.
\end{equation*}
Since $R\ge1$ and $|S|\le1$, we have
$
\left|
\frac{S}{2}\frac{2R^2-1}{R^3}
\right|
\le 1.
$
Therefore
\begin{equation*}
\rho D_qF(s,\rho q)\cdot q-F(s,\rho q)
\le -4+1=-3<0.
\end{equation*}
Thus \textup{(F2)} holds.

Finally,
\begin{equation*}
D_qF(s,q)
=
\frac{q}{\sqrt{1+|q|^2}}
-
\frac12\sin(2\pi s)\frac{q}{(1+|q|^2)^{3/2}}.
\end{equation*}
Hence
$
|D_qF(s,q)|\le \frac32.
$
Also
$$
|F_s(s,q)|
=
\left|
\pi\frac{\cos(2\pi s)}{\sqrt{1+|q|^2}}
\right|
\le \pi.
$$
Therefore $F$ is globally Lipschitz. Since $F$ is also smooth, \textup{(F4)} holds.
\end{example}
We finally give an explicit example with nontrivial $y$-dependence.
\begin{example}[A perturbative Hamiltonian with an explicit nonzero perturbation]
Let $F$ be the Hamiltonian in the previous example. Fix
\begin{equation*}
\eta_0=\frac{1}{10},
\qquad
\varphi(y)=-\frac{1}{4\pi^2}\sin(2\pi y_1).
\end{equation*}
Then
\begin{equation*}
\Delta_y\varphi(y)=\sin(2\pi y_1),
\qquad
D_y\varphi(y)
=
-\frac{1}{2\pi}\cos(2\pi y_1)e_1.
\end{equation*}
Define
\begin{equation*}
W_{\eta_0}(y,s,q)
=
\Delta_y\varphi(y)
+
\frac{
F(s-\eta_0\varphi(y),q-\eta_0D_y\varphi(y))-F(s,q)
}{\eta_0}.
\end{equation*}
Equivalently,
\begin{equation*}
W_{\eta_0}(y,s,q)
=
\sin(2\pi y_1)
+
10\left[
F\left(
s+\frac{\sin(2\pi y_1)}{40\pi^2},
q+\frac{\cos(2\pi y_1)}{20\pi}e_1
\right)
-
F(s,q)
\right].
\end{equation*}
Then
\begin{equation*}
H(y,s,q)=F(s,q)+\eta_0 W_{\eta_0}(y,s,q)
\end{equation*}
is a Hamiltonian belongs to the perturbative class..

The function $W_{\eta_0}$ is $1$-periodic in $y$ and $s$. Moreover, since $F$ is smooth and $\varphi$ is smooth and periodic, we have
\begin{equation*}
W_{\eta_0}\in C^\infty_{\rm loc}(\mathbb T^N\times\mathbb T\times\mathbb R^N).
\end{equation*}
In particular, $W_{\eta_0}$ satisfies the regularity assumptions required in the perturbative case.

We now verify that the explicit value
$
\eta_0=\frac{1}{10}
$
is admissible. Let $(c_0(p),\chi_p^0)$ be the unperturbed corrector associated with $F$, namely
\begin{equation*}
|p|^2(\chi_p^0)''
=
c_0(p)(\chi_p^0)'
+
F\bigl(\chi_p^0,p(\chi_p^0)'\bigr).
\end{equation*}
Define
\begin{equation*}
\chi_p^{\eta_0}(y,z)
=
\chi_p^0(z)+\eta_0\varphi(y).
\end{equation*}
Then $(\chi_p^{\eta_0})_z=(\chi_p^0)'(z)>0.$
Moreover, $D_y\chi_p^{\eta_0}
=
\eta_0D_y\varphi(y),$
and $D_y\chi_p^{\eta_0}
+
p(\chi_p^{\eta_0})_z
=
\eta_0D_y\varphi(y)+p(\chi_p^0)'(z).$
Set $s=\chi_p^{\eta_0}(y,z),
$ $
q=D_y\chi_p^{\eta_0}
+
p(\chi_p^{\eta_0})_z.$
Then $s-\eta_0\varphi(y)=\chi_p^0(z),$
and $q-\eta_0D_y\varphi(y)=p(\chi_p^0)'(z).$
By the definition of $W_{\eta_0}$,
\begin{equation*}
F(s,q)+\eta_0W_{\eta_0}(y,s,q)
=
F\bigl(s-\eta_0\varphi(y),q-\eta_0D_y\varphi(y)\bigr)
+
\eta_0\Delta_y\varphi(y).
\end{equation*}
Therefore,
\begin{equation*}
F(s,q)+\eta_0W_{\eta_0}(y,s,q)
=
F\bigl(\chi_p^0,p(\chi_p^0)'\bigr)
+
\eta_0\Delta_y\varphi(y).
\end{equation*}

On the other hand,
\begin{equation*}
\Delta_y\chi_p^{\eta_0}
=
\eta_0\Delta_y\varphi(y),
\qquad
D^2_{yz}\chi_p^{\eta_0}=0,
\qquad
(\chi_p^{\eta_0})_{zz}=(\chi_p^0)''.
\end{equation*}
Hence
\begin{equation*}
\begin{aligned}
&\Delta_y\chi_p^{\eta_0}
+
2p\cdot D^2_{yz}\chi_p^{\eta_0}
+
|p|^2(\chi_p^{\eta_0})_{zz}
\\
&\qquad
=
\eta_0\Delta_y\varphi(y)
+
|p|^2(\chi_p^0)''.
\end{aligned}
\end{equation*}
Using the unperturbed cell ODE, we obtain $c_0(p)(\chi_p^0)'
+
F\bigl(\chi_p^0,p(\chi_p^0)'\bigr)
=
|p|^2(\chi_p^0)''.$
Thus
\begin{equation*}
\begin{aligned}
&c_0(p)(\chi_p^{\eta_0})_z
+
F\left(
\chi_p^{\eta_0},
D_y\chi_p^{\eta_0}+p(\chi_p^{\eta_0})_z
\right)
\\
&\qquad
+
\eta_0W_{\eta_0}\left(
y,
\chi_p^{\eta_0},
D_y\chi_p^{\eta_0}+p(\chi_p^{\eta_0})_z
\right)
\\
&=
\Delta_y\chi_p^{\eta_0}
+
2p\cdot D^2_{yz}\chi_p^{\eta_0}
+
|p|^2(\chi_p^{\eta_0})_{zz}.
\end{aligned}
\end{equation*}
Consequently, $c_{\eta_0}(p)=c_0(p),
$ $
\chi_p^{\eta_0}(y,z)=\chi_p^0(z)+\eta_0\varphi(y)$
is an exact corrector for the perturbed cell problem. Hence the explicit value $\eta_0=\frac{1}{10}$
is admissible.
\end{example}

\appendix
\renewcommand{\theproposition}{\Alph{section}.\arabic{proposition}}
\renewcommand{\thelemma}{\Alph{section}.\arabic{lemma}}

\section{Comparison principle for equations with variable diffusion}

In the appendix, we prove the comparison principle used in the paper, following \cite{CrandallIshiiLions1992}.
\begin{proposition}[Comparison principle in the linear growth class]
Let \(T>0\). Assume that
\[
 A(x)=\sigma(x)\sigma(x)^T,
 \qquad
 \sigma\in W^{1,\infty}(\mathbb R^n;\mathbb R^{n\times m}).
\]
Assume that \(H=H(x,r,p)\) is continuous and satisfies the following Lipschitz condition: there exists \(L>0\) such that
\[
 |H(x,r,p)-H(y,s,q)|
 \le
 L\Bigl((1+|p|+|q|)|x-y|+|r-s|+|p-q|\Bigr)
\]
for all \(x,y\in\mathbb R^n\), \(r,s\in\mathbb R\), and \(p,q\in\mathbb R^n\). Let \(u\in C(\mathbb R^n\times[0,T])\) be a viscosity subsolution and \(v\in C(\mathbb R^n\times[0,T])\) be a viscosity supersolution of
\[
 w_t+H(x,w,Dw)=\operatorname{tr}(A(x)D^2w)
 \quad\text{in }\mathbb R^n\times(0,T).
\]
Assume that
\[
 u(x,0)\le v(x,0)
 \quad\text{for all }x\in\mathbb R^n,
\]
and that \(u,v\) have at most linear growth in \(x\), uniformly in \(t\in[0,T]\). Then
\[
 u(x,t)\le v(x,t)
 \quad\text{for all }(x,t)\in\mathbb R^n\times[0,T].
\]
\end{proposition}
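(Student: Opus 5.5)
We follow the standard doubling-of-variables argument of \cite{CrandallIshiiLions1992}, adapted to the linear growth class and to the fact that $H$ is merely Lipschitz in $r$ with no monotonicity. First we remove the lack of properness in $r$ by an exponential change of unknown. Setting $\tilde u=e^{-\lambda t}u$ and $\tilde v=e^{-\lambda t}v$ with $\lambda>L$, we get sub/supersolutions of
\[
w_t+\lambda w+e^{-\lambda t}H(x,e^{\lambda t}w,e^{\lambda t}Dw)=\operatorname{tr}(A(x)D^2w).
\]
The new Hamiltonian $\tilde H(x,t,r,p)$ satisfies
\[
\tilde H(x,t,r,p)-\tilde H(x,t,s,p)\ge(\lambda-L)(r-s)\quad\text{for }r\ge s,
\]
and it keeps the same structural bound in $(x,p)$ on $[0,T]$, with constants depending on $T$. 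Linear growth of $u,v$ is preserved.

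Next we argue by contradiction. Assume $\sup(\tilde u-\tilde v)>0$. Because only linear growth is available, we localize with a penalization of the form
\[
\Phi(x,y,t)=\tilde u(x,t)-\tilde v(y,t)-\frac{|x-y|^2}{2\varepsilon}-\beta\bigl(\langle x\rangle+\langle y\rangle\bigr)-\frac{\gamma}{T-t},
\]
where $\langle x\rangle=\sqrt{1+|x|^2}$ and the parameters $\beta,\gamma$ are small. The factor $e^{Kt}$ in front of $\beta$ is there to absorb the error terms generated by $\langle\cdot\rangle$. Linear growth ensures a maximum exists for $\beta>0$. Standard estimates then give
\[
|x_\varepsilon-y_\varepsilon|^2/\varepsilon\to0,\qquad t_\varepsilon>0
\]
for $\varepsilon$ small, the latter because $\tilde u\le\tilde v$ at $t=0$ and by continuity. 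We then apply the parabolic Ishii lemma to obtain $(a,p_\varepsilon+\beta D\langle x\rangle,X)$ and $(b,p_\varepsilon-\beta D\langle y\rangle,Y)$ with $p_\varepsilon=(x-y)/\varepsilon$, $a-b=\gamma/(T-t)^2$, and the matrix inequality
\[
\begin{pmatrix}X&0\\0&-Y\end{pmatrix}\le \frac{3}{\varepsilon}\begin{pmatrix}I&-I\\-I&I\end{pmatrix}+O(\beta).
\]

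The key step, and the main obstacle, is handling the second-order term with variable diffusion and the $x$-dependence of $H$ with the weight $(1+|p|+|q|)|x-y|$. For the diffusion we test the matrix inequality against $\begin{pmatrix}\sigma(x)\\ \sigma(y)\end{pmatrix}\begin{pmatrix}\sigma(x)\\ \sigma(y)\end{pmatrix}^T$. Using $\sigma\in W^{1,\infty}$, this gives
\[
\operatorname{tr}(A(x)X)-\operatorname{tr}(A(y)Y)\le \frac{3}{\varepsilon}\|D\sigma\|_\infty^2|x-y|^2+O(\beta(1+|x|+|y|)).
\]
For the Hamiltonian term we get
\[
L(1+2|p_\varepsilon|+O(\beta))|x-y|\le C\Bigl(|x-y|+\frac{|x-y|^2}{\varepsilon}\Bigr)+O(\beta|x-y|).
\]
Both contributions vanish as $\varepsilon\to0$. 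The $\beta$-terms coming from $\sigma$ and $\langle x\rangle$ grow like $\beta(1+|x|)$. This is where care is needed: $A$ grows at most quadratically, but $D\langle x\rangle$ and $D^2\langle x\rangle$ decay, so $\operatorname{tr}(AD^2\langle x\rangle)\le C\langle x\rangle$ and $|D_pH\cdot D\langle x\rangle|\le L$. These terms are dominated by choosing the penalization $\beta e^{Kt}(\langle x\rangle+\langle y\rangle)$ with $K$ large, whose time derivative produces the compensating $K\beta e^{Kt}\langle x\rangle$. Monotonicity in $r$ is used at the end: since $\tilde u(x_\varepsilon,t_\varepsilon)-\tilde v(y_\varepsilon,t_\varepsilon)\ge\delta>0$, the properness term contributes $(\lambda-L)\delta$.

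Combining everything, we subtract the two viscosity inequalities and obtain
\[
\frac{\gamma}{T^2}+(\lambda-L)\delta\le o_\varepsilon(1)+\text{(terms absorbed by }K),
\]
which is a contradiction. Letting $\beta,\gamma\to0$ then gives $\tilde u\le\tilde v$, and hence $u\le v$.
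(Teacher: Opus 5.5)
Your proof breaks down at the localization. You assert that linear growth guarantees that $\Phi$ attains its maximum for every $\beta>0$, but for small $\beta$ this is false. All you know is $\tilde u(x,t)-\tilde v(y,t)\le C_0(2+|x|+|y|)$. On the diagonal $x=y$ the term $|x-y|^2/2\varepsilon$ vanishes, so the only penalty left is $2\beta e^{Kt}\langle x\rangle$. Nothing you have established prevents $\tilde u-\tilde v$ from growing like $2C_0|x|$ there, and once $\beta e^{KT}<C_0$ that growth beats the penalty. So, as far as your argument knows, $\sup\Phi$ may be $+\infty$, or finite but not attained. Every later step that needs a maximum point in a compact set then has nothing to stand on: $|x_\varepsilon-y_\varepsilon|^2/\varepsilon\to0$, $t_\varepsilon>0$, and the vanishing of $L(1+|p|+|q|)|x-y|$. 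Taking $\beta\ge C_0$ restores existence, but then you cannot send $\beta\to0$, and you only recover $\tilde u-\tilde v\le C\langle x\rangle$, which you already had. The $\beta$-errors produced by $\langle\cdot\rangle$, which you flagged as the main obstacle, are harmless. The real issue is that a linearly growing penalization with vanishing slope does not localize functions of linear growth.

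There are two ways to repair this.

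The paper first proves the a priori bound $\tilde u(x,t)-\tilde v(y,t)\le C(1+|x-y|)$ (its Step 1). It penalizes with $\Theta e^{\mu t}\langle x-y\rangle$ and with cutoffs $\beta_R$ that vanish on $B_R$ but have slope at least $4C_0$ at infinity. The supremum is then attained, and one sends $R\to\infty$ instead of the slope to zero; the resulting $O(1)$ errors are absorbed by choosing $\mu$ and then $\Theta$ large. This bound makes $\tilde u-\tilde v$ bounded near the diagonal, so a small-coefficient penalization localizes. The paper then uses $\alpha|x|^2$, but your $\beta e^{Kt}\langle x\rangle$ would now also work. The bound also gives $|\bar x-\bar y|\le C\kappa$, hence bounded doubling gradients.

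More directly, you can replace $\langle x\rangle$ by $\langle x\rangle^2$. With the penalty $\beta e^{Kt}(\langle x\rangle^2+\langle y\rangle^2)$, a maximum exists for every $\beta>0$. Since $\sigma\in W^{1,\infty}$ makes $A$ bounded (not quadratically growing) and $H$ is globally $L$-Lipschitz in $p$, the new errors are at most $2L\beta e^{Kt}(|x|+|y|)+2\beta e^{Kt}\operatorname{tr}(A(x)+A(y))$. These are absorbed by $K\beta e^{Kt}(\langle x\rangle^2+\langle y\rangle^2)$ for $K\ge L+2\|\sigma\|_\infty^2$. The extra piece $2L\beta e^{Kt}(|x|+|y|)|x-y|$ coming from the weight $(1+|p|+|q|)|x-y|$ vanishes as $\varepsilon\to0$ with $\beta$ fixed, because the maximum points stay in a $\beta$-dependent ball.

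With either repair, the rest of your argument goes through: the exponential change of unknown, testing the Ishii matrix inequality against the columns of $\sigma$, and bounding the $x$-dependence by $|x-y|+|x-y|^2/\varepsilon$.
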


\begin{proof}
Choose \(k>L\) and set
\[
 \widetilde u(x,t):=e^{-kt}u(x,t),
 \qquad
 \widetilde v(x,t):=e^{-kt}v(x,t).
\]
Then \(\widetilde u\) and \(\widetilde v\) are respectively a viscosity subsolution and supersolution of
\[
 w_t+G(t,x,w,Dw)=\operatorname{tr}(A(x)D^2w),
\]
where
\[
 G(t,x,r,p):=kr+e^{-kt}H(x,e^{kt}r,e^{kt}p).
\]
Moreover, if \(r\ge s\), then
\[
\begin{aligned}
 G(t,x,r,p)-G(t,x,s,p)
 &=
 k(r-s)+e^{-kt}
 \Bigl[
 H(x,e^{kt}r,e^{kt}p)-H(x,e^{kt}s,e^{kt}p)
 \Bigr] \\
 &\ge (k-L)(r-s).
\end{aligned}
\]
Set $ \lambda:=k-L>0.$

We prove \(\widetilde u\le \widetilde v\). Suppose by contradiction that $M:=\sup_{\mathbb R^n\times[0,T]}(\widetilde u-\widetilde v)>0.$

We first introduce a localization function. Since \(\widetilde u,\widetilde v\) have at most linear growth, there exists \(C_0>0\) such that
\[
 \widetilde u(x,t)-\widetilde v(y,s)\le C_0(1+|x|+|y|)
 \quad\text{for all }x,y\in\mathbb R^n,\ t,s\in[0,T].
\]
Choose a family \(\{\beta_R\}_{R>1}\subset C^2(\mathbb R^n)\) such that
\[
 \beta_R\ge0,\qquad
 \beta_R(x)=0\quad\text{for }|x|\le R,
\]
\[
 \liminf_{|x|\to\infty}\frac{\beta_R(x)}{|x|}\ge 4C_0,
\]
\[
 \|D\beta_R\|_{L^\infty}+\|D^2\beta_R\|_{L^\infty}\le C,
\]
and
\[
 \beta_R(x)\to0
 \quad\text{as }R\to\infty
 \quad\text{for every fixed }x\in\mathbb R^n.
\]
\textbf{Step 1:}

Let $ \zeta(z):=\sqrt{1+|z|^2}.$
We first prove the auxiliary estimate
\[
 \widetilde u(x,t)-\widetilde v(y,t)
 \le
 C(1+|x-y|)
 \quad\text{for all }x,y\in\mathbb R^n,\ t\in[0,T].
\]
Indeed, fix \(\Theta>0\), \(\mu>0\), and \(R>1\), and consider
\[
 \Phi_R(x,y,t)
 :=
 \widetilde u(x,t)-\widetilde v(y,t)
 -
 \Theta e^{\mu t}\zeta(x-y)
 -
 \beta_R(x)-\beta_R(y).
\]
Because of the growth of \(\beta_R\), \(\Phi_R\) attains its supremum on \(\mathbb R^n\times\mathbb R^n\times[0,T]\). We claim that for \(\Theta,\mu\) sufficiently large, independent of \(R\), $ \sup_{\mathbb R^n\times\mathbb R^n\times[0,T]}\Phi_R\le0.$
Assume otherwise. By the initial condition and by choosing \(\Theta\) larger if necessary, a positive maximum cannot occur at \(t=0\). To avoid the common time variable, for \(\nu>0\) consider
\[
 \Psi_{R,\nu}(x,y,t,s)
 :=
 \widetilde u(x,t)-\widetilde v(y,s)
 -
 \Theta e^{\mu t}\zeta(x-y)
 -
 \beta_R(x)-\beta_R(y)
 -
 \frac{|t-s|^2}{2\nu}.
\]
Let \((x_\nu,y_\nu,t_\nu,s_\nu)\) be a maximum point. As \(\nu\to0\), up to a subsequence, $(x_\nu,y_\nu,t_\nu,s_\nu)\to(\bar x,\bar y,\bar t,\bar t),$
where \((\bar x,\bar y,\bar t)\) is a positive maximum point of \(\Phi_R\).

We now derive the key estimate used in Step 1. Let
\[
 p_\nu:=\Theta e^{\mu t_\nu}D\zeta(x_\nu-y_\nu),
 \qquad
 Z_\nu:=\Theta e^{\mu t_\nu}D^2\zeta(x_\nu-y_\nu).
\]
By Ishii's lemma, there exist \(X_\nu,Y_\nu\in\mathbb S^n\) such that
\[
 \left(
 \mu\Theta e^{\mu t_\nu}\zeta(x_\nu-y_\nu)
 +
 \frac{t_\nu-s_\nu}{\nu},
 p_\nu+D\beta_R(x_\nu),
 X_\nu+D^2\beta_R(x_\nu)
 \right)
 \in
 \overline{J}^{2,+}\widetilde u(x_\nu,t_\nu),
\]
and
\[
 \left(
 \frac{t_\nu-s_\nu}{\nu},
 p_\nu-D\beta_R(y_\nu),
 Y_\nu-D^2\beta_R(y_\nu)
 \right)
 \in
 \overline{J}^{2,-}\widetilde v(y_\nu,s_\nu).
\]
Using the viscosity subsolution inequality for \(\widetilde u\), we get
\[
\begin{aligned}
 &\mu\Theta e^{\mu t_\nu}\zeta(x_\nu-y_\nu)
 +
 \frac{t_\nu-s_\nu}{\nu}
 +
 G\left(
 t_\nu,x_\nu,\widetilde u(x_\nu,t_\nu),
 p_\nu+D\beta_R(x_\nu)
 \right)
 \\
 &\le
 \operatorname{tr}
 \left(
 A(x_\nu)\bigl(X_\nu+D^2\beta_R(x_\nu)\bigr)
 \right).
\end{aligned}
\]
Similarly, using the viscosity supersolution inequality for \(\widetilde v\), we have
\[
\begin{aligned}
 &\frac{t_\nu-s_\nu}{\nu}
 +
 G\left(
 s_\nu,y_\nu,\widetilde v(y_\nu,s_\nu),
 p_\nu-D\beta_R(y_\nu)
 \right)
 \\
 &\ge
 \operatorname{tr}
 \left(
 A(y_\nu)\bigl(Y_\nu-D^2\beta_R(y_\nu)\bigr)
 \right).
\end{aligned}
\]
Subtracting the two inequalities yields
\[
\begin{aligned}
 &\mu\Theta e^{\mu t_\nu}\zeta(x_\nu-y_\nu)
 +
 G\left(
 t_\nu,x_\nu,\widetilde u(x_\nu,t_\nu),
 p_\nu+D\beta_R(x_\nu)
 \right)
 \\
 &\quad -
 G\left(
 s_\nu,y_\nu,\widetilde v(y_\nu,s_\nu),
 p_\nu-D\beta_R(y_\nu)
 \right)
 \\
 &\le
 \operatorname{tr}
 \left(
 A(x_\nu)\bigl(X_\nu+D^2\beta_R(x_\nu)\bigr)
 \right)
 -
 \operatorname{tr}
 \left(
 A(y_\nu)\bigl(Y_\nu-D^2\beta_R(y_\nu)\bigr)
 \right).
\end{aligned}
\]
We split the difference of the \(G\)-terms as
\[
\begin{aligned}
 &G\left(
 t_\nu,x_\nu,\widetilde u(x_\nu,t_\nu),
 p_\nu+D\beta_R(x_\nu)
 \right)
 -
 G\left(
 s_\nu,y_\nu,\widetilde v(y_\nu,s_\nu),
 p_\nu-D\beta_R(y_\nu)
 \right)
 \\
 &=
 \Bigl[
 G\left(
 t_\nu,x_\nu,\widetilde u(x_\nu,t_\nu),
 p_\nu+D\beta_R(x_\nu)
 \right)
 -
 G\left(
 t_\nu,x_\nu,\widetilde v(y_\nu,s_\nu),
 p_\nu+D\beta_R(x_\nu)
 \right)
 \Bigr]
 \\
 &\quad+
 \Bigl[
 G\left(
 t_\nu,x_\nu,\widetilde v(y_\nu,s_\nu),
 p_\nu+D\beta_R(x_\nu)
 \right)
 -
 G\left(
 s_\nu,y_\nu,\widetilde v(y_\nu,s_\nu),
 p_\nu-D\beta_R(y_\nu)
 \right)
 \Bigr].
\end{aligned}
\]
Since \(G\) is strictly increasing in the \(r\)-variable, we have
\[
\begin{aligned}
 &G\left(
 t_\nu,x_\nu,\widetilde u(x_\nu,t_\nu),
 p_\nu+D\beta_R(x_\nu)
 \right)
 -
 G\left(
 t_\nu,x_\nu,\widetilde v(y_\nu,s_\nu),
 p_\nu+D\beta_R(x_\nu)
 \right)
 \\
 &\ge
 \lambda\bigl(
 \widetilde u(x_\nu,t_\nu)-\widetilde v(y_\nu,s_\nu)
 \bigr),
\end{aligned}
\]
where \(\lambda=k-L>0\). Therefore
\[
\begin{aligned}
 \mu\Theta e^{\mu t_\nu}\zeta(x_\nu-y_\nu)
 &\le
 -\lambda\bigl(
 \widetilde u(x_\nu,t_\nu)-\widetilde v(y_\nu,s_\nu)
 \bigr)
 \\
 &\quad+
 E_\nu
 +
 \mathcal T_\nu,
\end{aligned}
\]
where
\[
\begin{aligned}
 E_\nu
 &:=
 \left|
 G\left(
 t_\nu,x_\nu,\widetilde v(y_\nu,s_\nu),
 p_\nu+D\beta_R(x_\nu)
 \right)
 -
 G\left(
 s_\nu,y_\nu,\widetilde v(y_\nu,s_\nu),
 p_\nu-D\beta_R(y_\nu)
 \right)
 \right|,
\end{aligned}
\]
and
\[
\begin{aligned}
 \mathcal T_\nu
 &:=
 \operatorname{tr}
 \left(
 A(x_\nu)\bigl(X_\nu+D^2\beta_R(x_\nu)\bigr)
 \right)
 -
 \operatorname{tr}
 \left(
 A(y_\nu)\bigl(Y_\nu-D^2\beta_R(y_\nu)\bigr)
 \right).
\end{aligned}
\]
We now estimate \(E_\nu\) and \(\mathcal T_\nu\). First, since $|D\zeta|\le1,
 $ $
 |p_\nu|\le \Theta e^{\mu T},$
and since $ \|D\beta_R\|_{L^\infty}\le C,$
the gradient variables in \(E_\nu\) are bounded by \(C(1+\Theta e^{\mu T})\). Moreover,
\[
 |D\beta_R(x_\nu)+D\beta_R(y_\nu)|\le C.
\]
Using the Lipschitz continuity of \(G\) in \(x\) and \(p\), we obtain
\[
 E_\nu
 \le
 C\Theta e^{\mu t_\nu}\zeta(x_\nu-y_\nu)+C+\omega_R(|t_\nu-s_\nu|),
\]
where \(\omega_R(r)\to0\) as \(r\to0\). The modulus \(\omega_R\) comes from the time-dependence of \(G\); for fixed \(R\), the variables are confined to a compact set by the localization \(\beta_R\).

Next, using \(A=\sigma\sigma^T\) and the matrix inequality in Ishii's lemma, we estimate the second-order term as follows:
\[
\begin{aligned}
 \mathcal T_\nu
 &\le
 C\Theta e^{\mu t_\nu}
 \|D^2\zeta(x_\nu-y_\nu)\|
 \|\sigma(x_\nu)-\sigma(y_\nu)\|^2
 \\
 &\quad+
 C\bigl(
 \|D^2\beta_R(x_\nu)\|
 +
 \|D^2\beta_R(y_\nu)\|
 \bigr).
\end{aligned}
\]
Since \(\sigma\) is Lipschitz and
\[
 \|D^2\zeta(z)\|\le C(1+|z|^2)^{-1/2},
\]
we get
\[
\begin{aligned}
 \mathcal T_\nu
 &\le
 C\Theta e^{\mu t_\nu}
 \frac{|x_\nu-y_\nu|^2}{\sqrt{1+|x_\nu-y_\nu|^2}}
 +
 C
 \\
 &\le
 C\Theta e^{\mu t_\nu}\zeta(x_\nu-y_\nu)+C.
\end{aligned}
\]
Combining the estimates for \(E_\nu\) and \(\mathcal T_\nu\), we arrive at
\[
\begin{aligned}
 \mu\Theta e^{\mu t_\nu}\zeta(x_\nu-y_\nu)
 &\le
 -\lambda\bigl(
 \widetilde u(x_\nu,t_\nu)-\widetilde v(y_\nu,s_\nu)
 \bigr)
 \\
 &\quad+
 C\Theta e^{\mu t_\nu}\zeta(x_\nu-y_\nu)
 +
 C
 +
 \omega_R(|t_\nu-s_\nu|).
\end{aligned}
\]
Finally, since \(t_\nu-s_\nu\to0\) as \(\nu\to0\), we may let \(\nu\to0\) and obtain
\[
\begin{aligned}
 \mu\Theta e^{\mu \bar t}\zeta(\bar x-\bar y)
 &\le
 -\lambda\bigl(
 \widetilde u(\bar x,\bar t)-\widetilde v(\bar y,\bar t)
 \bigr)
 \\
 &\quad+
 C\Theta e^{\mu \bar t}\zeta(\bar x-\bar y)
 +
 C.
\end{aligned}
\]
Moreover,
\[
 \mu\Theta e^{\mu\bar t}\zeta(\bar x-\bar y)
 \le
 C\Theta e^{\mu\bar t}\zeta(\bar x-\bar y)+C.
\]
Choosing \(\mu>2C\) and then \(\Theta\) sufficiently large yields a contradiction. Therefore $\Phi_R\le0.$
Letting \(R\to\infty\), we obtain
\[
 \widetilde u(x,t)-\widetilde v(y,t)
 \le
 \Theta e^{\mu T}\zeta(x-y)
 \le
 C(1+|x-y|).
\]

\textbf{Step 2:}

We now prove the comparison. Let \(\eta>0\), and assume that
\[
 \sup_{\mathbb R^n\times[0,T]}
 \bigl(\widetilde u-\widetilde v-\eta t\bigr)>0.
\]
For \(\alpha,\kappa,\nu>0\), consider
\[
\begin{aligned}
 M_{\alpha,\kappa,\nu}
 :=
 \sup_{x,y\in\mathbb R^n,\ t,s\in[0,T]}
 \Bigl\{
 \widetilde u(x,t)-\widetilde v(y,s)
 -
 \eta t
 -
 \alpha|x|^2
 -
 \frac{|x-y|^2}{2\kappa}
 -
 \frac{|t-s|^2}{2\nu}
 \Bigr\}.
\end{aligned}
\]
Let \((\bar x,\bar y,\bar t,\bar s)\) be a maximum point. The estimate just proved implies
\[
 |\bar x-\bar y|\le C\kappa,
\]
and the standard doubling argument gives
\[
 \frac{|\bar t-\bar s|^2}{\nu}\to0
 \quad\text{as }\nu\to0.
\]
Moreover, $ \alpha|\bar x|^2$
is bounded, hence
\[
 \alpha|\bar x|\to0
 \quad\text{as }\alpha\to0.
\]

For small parameters, the positivity of the above supremum and the initial condition imply
\[
 \bar t>0,
 \qquad
 \bar s>0.
\]
Set
\[
 p:=\frac{\bar x-\bar y}{\kappa}+2\alpha\bar x,
 \qquad
 q:=\frac{\bar x-\bar y}{\kappa}.
\]
By the parabolic Ishii lemma, there exist \(X,Y\in\mathbb S^n\) such that
\[
 \left(
 \eta+\frac{\bar t-\bar s}{\nu},
 p,
 X
 \right)
 \in
 \overline{J}^{2,+}\widetilde u(\bar x,\bar t),
\]
and
\[
 \left(
 \frac{\bar t-\bar s}{\nu},
 q,
 Y
 \right)
 \in
 \overline{J}^{2,-}\widetilde v(\bar y,\bar s).
\]
We estimate the second-order term. Since
\[
 A(x)=\sigma(x)\sigma(x)^T,
 \qquad
 \sigma\in W^{1,\infty}(\mathbb R^n;\mathbb R^{n\times m}),
\]
we write
\[
\begin{aligned}
&\operatorname{tr}(A(\bar x)X)-\operatorname{tr}(A(\bar y)Y)\\
&=
\sum_{j=1}^m
\left[
\left\langle X\sigma(\bar x)e_j,\sigma(\bar x)e_j\right\rangle
-
\left\langle Y\sigma(\bar y)e_j,\sigma(\bar y)e_j\right\rangle
\right].
\end{aligned}
\]
By the matrix inequality in Ishii's lemma,
\[
 \begin{pmatrix}
 X & 0\\
 0 & -Y
 \end{pmatrix}
 \le
 \frac{C}{\kappa}
 \begin{pmatrix}
 I & -I\\
 -I & I
 \end{pmatrix}
 +
 C\alpha
 \begin{pmatrix}
 I & 0\\
 0 & I
 \end{pmatrix}.
\]
Applying this inequality to the vector
\[
 \binom{\sigma(\bar x)e_j}{\sigma(\bar y)e_j}
\]
and summing over \(j=1,\dots,m\), we get
\[
\begin{aligned}
&\operatorname{tr}(A(\bar x)X)-\operatorname{tr}(A(\bar y)Y)\\
&\le
\frac{C}{\kappa}
\|\sigma(\bar x)-\sigma(\bar y)\|^2
+
C\alpha
\left(
\|\sigma(\bar x)\|^2+\|\sigma(\bar y)\|^2
\right).
\end{aligned}
\]
Since \(\sigma\) is bounded and Lipschitz continuous, this yields
\[
 \operatorname{tr}(A(\bar x)X)-\operatorname{tr}(A(\bar y)Y)
 \le
 C\frac{|\bar x-\bar y|^2}{\kappa}+C\alpha.
\]
Using the viscosity inequalities, we obtain
\[
\begin{aligned}
 \eta
 &+
 G(\bar t,\bar x,\widetilde u(\bar x,\bar t),p)
 -
 G(\bar s,\bar y,\widetilde v(\bar y,\bar s),q) \\
 &\le
 C\frac{|\bar x-\bar y|^2}{\kappa}+C\alpha.
\end{aligned}
\]
By the strict monotonicity of \(G\) in the \(r\)-variable,
\[
\begin{aligned}
 &G(\bar t,\bar x,\widetilde u(\bar x,\bar t),p)
 -
 G(\bar s,\bar y,\widetilde v(\bar y,\bar s),q)\\
 &\ge
 \lambda\bigl(\widetilde u(\bar x,\bar t)-\widetilde v(\bar y,\bar s)\bigr)
 -
 E_{\alpha,\kappa,\nu},
\end{aligned}
\]
where $ E_{\alpha,\kappa,\nu}
 :=
 \left|
 G(\bar t,\bar x,\widetilde v(\bar y,\bar s),p)
 -
 G(\bar s,\bar y,\widetilde v(\bar y,\bar s),q)
 \right|.$
Since
\[
 |\bar x-\bar y|\le C\kappa,
 \qquad
 |p-q|=2\alpha|\bar x|\to0,
\]
and \(p,q\) remain bounded, we have $E_{\alpha,\kappa,\nu}\to0$
as \(\nu\to0\), then \(\kappa\to0\), and finally \(\alpha\to0\). Hence
\[
 \eta
 \le
 C\frac{|\bar x-\bar y|^2}{\kappa}+C\alpha+E_{\alpha,\kappa,\nu}.
\]
Letting \(\nu\to0\), then \(\kappa\to0\), and then \(\alpha\to0\), we obtain $\eta\le0,$
which is impossible. Therefore, for every \(\eta>0\),
\[
 \widetilde u(x,t)-\widetilde v(x,t)-\eta t\le0
 \quad\text{in }\mathbb R^n\times[0,T].
\]
Letting \(\eta\to0\), we obtain
\[
 \widetilde u\le \widetilde v.
\]
Since \(e^{kt}>0\), this gives
\[
 u\le v
 \quad\text{in }\mathbb R^n\times[0,T].
\]
\end{proof}

\section{Uniform local bound for the equation}

\begin{lemma}\label{lem:initial-trace}
Assume that $H$ satisfies \textup{(H1)}--\textup{(H2)}, and let $u^\epsilon$ be the viscosity solution of
\begin{equation*}
\begin{cases}
u_t^\epsilon
+
H\left(
\dfrac{x}{\epsilon},
\dfrac{u^\epsilon}{\epsilon},
Du^\epsilon
\right)
=
\epsilon\Delta u^\epsilon
&\text{in }\mathbb R^N\times(0,\infty),
\\[2mm]
u^\epsilon(x,0)=u_0(x)
&\text{on }\mathbb R^N.
\end{cases}
\end{equation*}
Assume that
$$
u_0\in BUC(\mathbb R^N)\cap W^{1,\infty}(\mathbb R^N).
$$
Then, for every $T>0$, there exists a constant $C_T>0$, independent of $\epsilon$, such that
\begin{equation*}
|u^\epsilon(x,t)|\le C_T
\end{equation*}
for all $(x,t)\in\mathbb R^N\times[0,T]$.

Moreover, if
\begin{equation*}
u^*:={\limsup}_{\epsilon\to0}^{*}u^\epsilon,
\qquad
u_*:={\liminf}_{\epsilon\to0,*}u^\epsilon,
\end{equation*}
then
\begin{equation*}
u^*(x,0)=u_*(x,0)=u_0(x)
\end{equation*}
for every $x\in\mathbb R^N$.
\end{lemma}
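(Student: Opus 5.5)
Under (H1) the Hamiltonian has linear growth, $|H(y,s,q)|\le C_H(1+|q|)$ with $C_H:=|H(0,0,0)|+\gamma(1+\sqrt N)$, using periodicity in $(y,s)$ to control the dependence on those variables. I would build explicit barriers independent of $\epsilon$ and apply the comparison principle of Appendix~A with $A=\epsilon I$, $\sigma=\sqrt\epsilon I$. Its Lipschitz structure condition follows from (H1), up to the harmless rescaling of $y,s$ by $1/\epsilon$, which only enlarges the constant $L$ for fixed $\epsilon$. This is enough, since the constants in the barriers will not depend on $\epsilon$.

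\textbf{Step 1 (global bound).} Let $L_0:=\|Du_0\|_{L^\infty}$ and set $v^\pm(x,t):=u_0(x)\pm C_1t$ with $C_1:=C_H(1+L_0)+1$. The function $u_0$ is only Lipschitz, so I would regularize it. Take $u_0^\delta:=u_0*\rho_\delta$, which satisfies $|u_0^\delta-u_0|\le L_0\delta$, $|Du_0^\delta|\le L_0$ and $|D^2u_0^\delta|\le C L_0/\delta$. Then $v^+_\delta:=u_0^\delta+L_0\delta+(C_1+\epsilon CL_0/\delta)t$ is a classical supersolution with $v^+_\delta(\cdot,0)\ge u_0$, and $v^-_\delta$ is defined symmetrically as a subsolution. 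Comparison gives $|u^\epsilon-u_0|\le L_0\delta+(C_1+\epsilon CL_0/\delta)t$. Choosing $\delta=1$ already yields $|u^\epsilon|\le \|u_0\|_\infty+L_0+(C_1+CL_0)T=:C_T$ for $\epsilon\le1$. The solutions have linear growth, being bounded, so Appendix~A applies.

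\textbf{Step 2 (initial trace).} Take the same estimate with $\delta=\sqrt\epsilon$:
\[
|u^\epsilon(x,t)-u_0(x)|\le L_0\sqrt\epsilon+(C_1+CL_0\sqrt\epsilon)\,t .
\]
For $(x_\epsilon,t_\epsilon)\to(x,0)$ this gives $|u^\epsilon(x_\epsilon,t_\epsilon)-u_0(x)|\le L_0|x_\epsilon-x|+L_0\sqrt\epsilon+(C_1+CL_0)t_\epsilon\to0$. Hence $u^*(x,0)=u_*(x,0)=u_0(x)$.

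\textbf{Main obstacle.} The delicate point is checking that $v^\pm_\delta$ really are super- and subsolutions despite the $u/\epsilon$ dependence. The bound on $H$ must be uniform in the phase argument, so no $\epsilon^{-1}$ factors appear; this is exactly where periodicity together with the linear growth in $q$ is used. The Laplacian term costs only $\epsilon\|D^2u_0^\delta\|$, and this is absorbed by the choice $\delta=\sqrt\epsilon$.
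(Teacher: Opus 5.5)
Your proof is correct and follows the paper's route: the paper also uses the barriers $u_0^\delta\pm L_0\delta\pm\bigl(C_1+\epsilon\|\Delta u_0^\delta\|_{L^\infty}\bigr)t$ together with the comparison principle of Appendix~A. The differences are cosmetic: the paper gets the global bound from the constant barriers $\pm(\|u_0\|_{L^\infty}+B_0t)$, which work for every $\epsilon$ rather than only $\epsilon\le1$ as with your $\delta=1$, and it lets $\epsilon\to0$ before $\delta\to0$ instead of taking $\delta=\sqrt\epsilon$; one caveat is that citing the boundedness you are proving to check the linear-growth hypothesis of the comparison principle is circular, so, as the paper implicitly does, you should simply take $u^\epsilon$ to be the solution in the linear-growth class.
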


\begin{proof}
Let
$
M_0:=\|u_0\|_{L^\infty(\mathbb R^N)}.
$
By \textup{(H1)}--\textup{(H2)}, we have $B_0:=
\sup_{y\in\mathbb T^N,\ s\in\mathbb T}
|H(y,s,0)|<\infty.$
Define
$$
\overline u(x,t):=M_0+B_0t,
\qquad
\underline u(x,t):=-M_0-B_0t.
$$
Then
$$
D\overline u=D\underline u=0,
\qquad
\Delta\overline u=\Delta\underline u=0.
$$
Moreover,
\begin{equation*}
\overline u_t+
H\left(
\dfrac{x}{\epsilon},
\dfrac{\overline u}{\epsilon},
D\overline u
\right)
-
\epsilon\Delta\overline u
=
B_0+
H\left(
\dfrac{x}{\epsilon},
\dfrac{\overline u}{\epsilon},
0
\right)
\ge0,
\end{equation*}
and
\begin{equation*}
\underline u_t+
H\left(
\dfrac{x}{\epsilon},
\dfrac{\underline u}{\epsilon},
D\underline u
\right)
-
\epsilon\Delta\underline u
=
-B_0+
H\left(
\dfrac{x}{\epsilon},
\dfrac{\underline u}{\epsilon},
0
\right)
\le0.
\end{equation*}
Since
$$
\underline u(x,0)\le u_0(x)\le\overline u(x,0),
$$
the comparison principle gives
\begin{equation*}
-M_0-B_0t\le u^\epsilon(x,t)\le M_0+B_0t.
\end{equation*}
This proves the uniform boundedness on every finite time interval.

We now prove the initial trace. Let
$$
L_0:=\operatorname{Lip}(u_0).
$$
For $\delta>0$, let $u_0^\delta$ be a standard smooth mollification of $u_0$. Then
\begin{equation*}
\|u_0^\delta-u_0\|_{L^\infty(\mathbb R^N)}
\le C L_0\delta,
\end{equation*}
\begin{equation*}
\|Du_0^\delta\|_{L^\infty(\mathbb R^N)}
\le L_0,
\end{equation*}
and
\begin{equation*}
\|\Delta u_0^\delta\|_{L^\infty(\mathbb R^N)}
\le
\frac{C L_0}{\delta}.
\end{equation*}
By \textup{(H1)}--\textup{(H2)}, the quantity
\begin{equation*}
B_{L_0}:=
\sup_{y\in\mathbb T^N,\ s\in\mathbb T,\ |q|\le L_0}
|H(y,s,q)|
\end{equation*}
is finite.

Define
\begin{equation*}
\overline u^{\epsilon,\delta}(x,t)
:=
u_0^\delta(x)+C L_0\delta
+
\left(
B_{L_0}
+
\epsilon\|\Delta u_0^\delta\|_{L^\infty}
\right)t.
\end{equation*}
Then
$$
D\overline u^{\epsilon,\delta}=Du_0^\delta,
\qquad
\Delta\overline u^{\epsilon,\delta}=\Delta u_0^\delta.
$$
Since
$$
|D u_0^\delta|\le L_0,
$$
we have
\begin{equation*}
\begin{aligned}
&(\overline u^{\epsilon,\delta})_t
+
H\left(
\dfrac{x}{\epsilon},
\dfrac{\overline u^{\epsilon,\delta}}{\epsilon},
D\overline u^{\epsilon,\delta}
\right)
-
\epsilon\Delta\overline u^{\epsilon,\delta}
\\
&\qquad
\ge
B_{L_0}
+
\epsilon\|\Delta u_0^\delta\|_{L^\infty}
-
B_{L_0}
-
\epsilon\|\Delta u_0^\delta\|_{L^\infty}
=
0.
\end{aligned}
\end{equation*}
Thus $\overline u^{\epsilon,\delta}$ is a supersolution. Moreover,
$$
u_0(x)\le u_0^\delta(x)+C L_0\delta
=
\overline u^{\epsilon,\delta}(x,0).
$$
By the comparison principle,
\begin{equation*}
u^\epsilon(x,t)
\le
u_0^\delta(x)+C L_0\delta
+
\left(
B_{L_0}
+
\epsilon\|\Delta u_0^\delta\|_{L^\infty}
\right)t.
\end{equation*}
Using
$$
\|u_0^\delta-u_0\|_{L^\infty}\le C L_0\delta
$$
and
$$
\|\Delta u_0^\delta\|_{L^\infty}\le \frac{C L_0}{\delta},
$$
we obtain
\begin{equation*}
u^\epsilon(x,t)
\le
u_0(x)
+
C L_0\delta
+
\left(
B_{L_0}
+
\frac{C\epsilon L_0}{\delta}
\right)t.
\end{equation*}

Similarly, define
\begin{equation*}
\underline u^{\epsilon,\delta}(x,t)
:=
u_0^\delta(x)-C L_0\delta
-
\left(
B_{L_0}
+
\epsilon\|\Delta u_0^\delta\|_{L^\infty}
\right)t.
\end{equation*}
Then $\underline u^{\epsilon,\delta}$ is a subsolution and
$$
\underline u^{\epsilon,\delta}(x,0)\le u_0(x).
$$
By the comparison principle,
\begin{equation*}
u^\epsilon(x,t)
\ge
u_0(x)
-
C L_0\delta
-
\left(
B_{L_0}
+
\frac{C\epsilon L_0}{\delta}
\right)t.
\end{equation*}
Therefore,
\begin{equation*}
|u^\epsilon(x,t)-u_0(x)|
\le
C L_0\delta
+
\left(
B_{L_0}
+
\frac{C\epsilon L_0}{\delta}
\right)t.
\end{equation*}

Let $(x_\epsilon,t_\epsilon)\to(x,0)$ as $\epsilon\to0$. Then
\begin{equation*}
|u^\epsilon(x_\epsilon,t_\epsilon)-u_0(x)|
\le
C L_0\delta
+
L_0|x_\epsilon-x|
+
\left(
B_{L_0}
+
\frac{C\epsilon L_0}{\delta}
\right)t_\epsilon.
\end{equation*}
Taking $\limsup^*$ and $\liminf_*$ as $\epsilon\to0$, then letting $\delta\to0$, we obtain
\begin{equation*}
u^*(x,0)\le u_0(x)\le u_*(x,0).
\end{equation*}
Since always
$$
u_*(x,0)\le u^*(x,0),
$$
we conclude that
\begin{equation*}
u^*(x,0)=u_*(x,0)=u_0(x).
\end{equation*}
This completes the proof.
\end{proof}

\section{Phase-shift contact lemma}

\begin{lemma}[\textbf{Phase-shift contact lemma}]
Let $Q$ be a compact cylinder and let $\phi\in C^2(Q)$. Let $\chi$ be a corrector of the form
\begin{equation*}
\chi(y,z)=z+w(y,z),
\end{equation*}
where $w$ is bounded and periodic in $(y,z)$, and
\begin{equation*}
\chi(y,z+1)=\chi(y,z)+1.
\end{equation*}
For $s\in\mathbb R$, define
\begin{equation*}
\Phi_s^\epsilon(t,x)
=
\epsilon
\chi\left(
\frac{x}{\epsilon},
\frac{\phi(t,x)}{\epsilon}+s
\right).
\end{equation*}
Then
\begin{equation*}
\Phi_{s+1}^\epsilon=\Phi_s^\epsilon+\epsilon.
\end{equation*}

Assume that $u^*-\phi$ has a strict local maximum at $(x_0,t_0)$, with $t_0>0$. Then, after replacing $Q$ by a smaller cylinder around $(x_0,t_0)$, for every sufficiently small $\epsilon$ there exists $s_\epsilon\in\mathbb R$ such that
\begin{equation*}
M_\epsilon(s_\epsilon):=
\max_{\overline Q}\left(u^\epsilon-\Phi_{s_\epsilon}^\epsilon\right)=0.
\end{equation*}
Moreover, $\epsilon s_\epsilon\to0$ 
as $\epsilon\to0$, and every maximum point $(x_\epsilon,t_\epsilon)$ of $u^\epsilon-\Phi_{s_\epsilon}^\epsilon$ in $\overline Q$ satisfies $(x_\epsilon,t_\epsilon)\to(x_0,t_0).$
In particular, for such a maximum point,
\begin{equation*}
\frac{u^\epsilon(x_\epsilon,t_\epsilon)}{\epsilon}
=
\chi\left(
\frac{x_\epsilon}{\epsilon},
\frac{\phi(x_\epsilon,t_\epsilon)}{\epsilon}
+s_\epsilon
\right).
\end{equation*}\label{phase-shift contact lemma}
\end{lemma}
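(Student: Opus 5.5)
The plan is to study the scalar function $s\mapsto M_\epsilon(s):=\max_{\overline Q}(u^\epsilon-\Phi^\epsilon_s)$ and locate a zero by the intermediate value theorem. First, the identity $\Phi^\epsilon_{s+1}=\Phi^\epsilon_s+\epsilon$ is immediate from $\chi(y,z+1)=\chi(y,z)+1$. Since $w$ is bounded, say $\|w\|_{L^\infty}\le C_w$, we have the two-sided bound
\begin{equation*}
\bigl|\Phi^\epsilon_s(t,x)-\phi(t,x)-\epsilon s\bigr|\le \epsilon C_w .
\end{equation*}
If $\chi$ is continuous in $z$ uniformly in $y$ (true for our correctors, which are $C^2$), then $s\mapsto M_\epsilon(s)$ is continuous. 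It is also strictly decreasing, because $(\chi)_z>0$; in fact monotonicity alone suffices for the argument. Since $u^\epsilon$ is locally bounded (Lemma~\ref{lem:initial-trace}) and $\phi$ is bounded on $\overline Q$, the two-sided bound gives $M_\epsilon(s)\to\mp\infty$ as $s\to\pm\infty$. Hence a zero $s_\epsilon$ exists.

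Next I would control $\epsilon s_\epsilon$. Shrink $Q$ so that $(x_0,t_0)$ is the strict maximum of $u^*-\phi$ on $\overline Q$, and normalize $\phi$ so that $(u^*-\phi)(x_0,t_0)=0$ (adding a constant to $\phi$ only shifts $s$). From $M_\epsilon(s_\epsilon)=0$ and the two-sided bound,
\begin{equation*}
\epsilon s_\epsilon-\epsilon C_w\le \max_{\overline Q}(u^\epsilon-\phi)\le \epsilon s_\epsilon+\epsilon C_w .
\end{equation*}
By the standard half-relaxed limit property, $\limsup_{\epsilon\to0}\max_{\overline Q}(u^\epsilon-\phi)\le \max_{\overline Q}(u^*-\phi)=0$. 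Conversely, choosing $(y_\epsilon,\tau_\epsilon)\to(x_0,t_0)$ with $u^\epsilon(y_\epsilon,\tau_\epsilon)\to u^*(x_0,t_0)$ gives $\liminf_{\epsilon\to0}\max_{\overline Q}(u^\epsilon-\phi)\ge0$. Therefore $\epsilon s_\epsilon\to0$.

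For the convergence of maximum points, take any maximum point $(x_\epsilon,t_\epsilon)$ of $u^\epsilon-\Phi^\epsilon_{s_\epsilon}$ and any cluster point $(\bar x,\bar t)$ along a subsequence. Then $0=u^\epsilon(x_\epsilon,t_\epsilon)-\Phi^\epsilon_{s_\epsilon}(x_\epsilon,t_\epsilon)\le u^\epsilon(x_\epsilon,t_\epsilon)-\phi(x_\epsilon,t_\epsilon)-\epsilon s_\epsilon+\epsilon C_w$. Taking $\limsup$ gives $0\le (u^*-\phi)(\bar x,\bar t)$. Strictness of the maximum forces $(\bar x,\bar t)=(x_0,t_0)$. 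For small $\epsilon$ these points are then interior, so they are genuine local maxima usable in the viscosity test. Finally, $M_\epsilon(s_\epsilon)=0$ means $u^\epsilon(x_\epsilon,t_\epsilon)=\Phi^\epsilon_{s_\epsilon}(x_\epsilon,t_\epsilon)$; dividing by $\epsilon$ gives the contact identity.

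The main obstacle is the existence of an exact zero, i.e.\ the continuity of $M_\epsilon$ in $s$, which I obtain from uniform continuity of $\chi$ on compacts together with periodicity of $w$. Should only boundedness of $w$ be available, I would instead take $s_\epsilon:=\sup\{s:M_\epsilon(s)\ge0\}$ and use upper semicontinuity of $u^\epsilon$ to recover a touching point.
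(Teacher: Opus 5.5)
Your proposal follows the paper's proof essentially step for step. You obtain a zero of $M_\epsilon$ from continuity and the intermediate value theorem: you get $M_\epsilon(s)\to\mp\infty$ from $|\Phi^\epsilon_s-\phi-\epsilon s|\le C\epsilon$, whereas the paper gets it from $M_\epsilon(s+1)=M_\epsilon(s)-\epsilon$. You then use the sandwich $|\epsilon s_\epsilon-A_\epsilon|\le C\epsilon$ with $A_\epsilon:=\max_{\overline Q}(u^\epsilon-\phi)$, and finally strictness of the maximum to localize the contact points. Your justification that $M_\epsilon$ is continuous is more careful than the paper's, which only asserts it; this continuity genuinely needs $w$ to be continuous, not merely bounded. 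Your cluster-point argument is also more careful than the paper's.

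One claim is false as written. You cannot choose $(y_\epsilon,\tau_\epsilon)\to(x_0,t_0)$ with $u^\epsilon(y_\epsilon,\tau_\epsilon)\to u^*(x_0,t_0)$ for the whole family $\epsilon\to0$. The definition of $u^*$ as an upper half-relaxed limit only supplies such points along some sequence $\epsilon_n\to0$. For other values of $\epsilon$, $A_\epsilon$ may stay below a negative constant; nothing a priori prevents $u^\epsilon$ from alternating between different profiles. In that case neither $\epsilon s_\epsilon\to0$ nor the convergence of the maximum points follows, because your cluster-point step uses $\epsilon s_\epsilon\to0$. Only your upper bound $\limsup_{\epsilon\to0}A_\epsilon\le0$ holds for the full family.

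The correct conclusion is that both properties hold along a sequence $\epsilon_n\to0$ realizing $u^*(x_0,t_0)$, which is all the perturbed test function argument uses. The paper's sentence ``we may assume $A_\epsilon\to0$'' hides exactly the same restriction, so this is an imprecision shared with the paper rather than a defect of your route. (Your closing fallback via $\sup\{s:M_\epsilon(s)\ge0\}$ for merely bounded $w$ would not give an exact zero either, since $M_\epsilon$ could then jump over $0$. That case is irrelevant, however, because the test function has to be $C^2$ anyway.)
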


\begin{proof}
Since $\chi(y,z)=z+w(y,z)$ and $w$ is bounded, we have $\Phi_s^\epsilon(t,x)
=
\phi(t,x)+\epsilon s
+
\epsilon w\left(
\frac{x}{\epsilon},
\frac{\phi(t,x)}{\epsilon}+s
\right).$
Hence
\begin{equation}
\|\Phi_s^\epsilon-\phi-\epsilon s\|_{L^\infty(Q)}
\le C\epsilon.\label{inq phase}
\end{equation}
Also, from $\chi(y,z+1)=\chi(y,z)+1$, we get $\Phi_{s+1}^\epsilon=\Phi_s^\epsilon+\epsilon.$
Therefore $M_\epsilon(s+1)=M_\epsilon(s)-\epsilon.$
Since $s\mapsto M_\epsilon(s)$ is continuous, there exists $s_\epsilon$ such that $M_\epsilon(s_\epsilon)=0.$

We now show that $\epsilon s_\epsilon\to0$. Set
$
A_\epsilon:=\max_{\overline Q}(u^\epsilon-\phi).
$
By the definition of the half-relaxed limit and by the strict maximum assumption, after subtracting a constant from $\phi$ if necessary, we may assume
$
A_\epsilon\to0.
$
Using (\ref{inq phase}),
we obtain
\begin{equation*}
A_\epsilon-\epsilon s-C\epsilon
\le
M_\epsilon(s)
\le
A_\epsilon-\epsilon s+C\epsilon.
\end{equation*}
Putting $s=s_\epsilon$ and using $M_\epsilon(s_\epsilon)=0$, we get
\begin{equation*}
|\epsilon s_\epsilon-A_\epsilon|\le C\epsilon.
\end{equation*}
Thus $\epsilon s_\epsilon\to0.$

Finally, since $u^*-\phi$ has a strict local maximum at $(x_0,t_0)$, we may choose $Q$ so small that
\begin{equation*}
\sup_{\partial Q}(u^*-\phi)
<
(u^*-\phi)(x_0,t_0).
\end{equation*}
Because $\Phi_{s_\epsilon}^\epsilon-\phi=\epsilon s_\epsilon+O(\epsilon)\to0$ uniformly on $Q$, the maximum points of
$
u^\epsilon-\Phi_{s_\epsilon}^\epsilon
$
remain in the interior of $Q$ and converge to $(x_0,t_0)$. At a maximum point $(x_\epsilon,t_\epsilon)$, the identity
$
M_\epsilon(s_\epsilon)=0
$
gives
\begin{equation*}
u^\epsilon(x_\epsilon,t_\epsilon)
=
\Phi_{s_\epsilon}^\epsilon(x_\epsilon,t_\epsilon).
\end{equation*}
Dividing by $\epsilon$ yields the desired phase identity.
\end{proof}

\section*{Acknowledgements}

I am grateful to Dr. Panrui Ni for suggesting this problem and for many helpful discussions. I sincerely thank Professor Hiroyoshi Mitake for his guidance and assistance in revising the manuscript, and Professor Hung Tran for his valuable comments.

\section*{Declarations}

\noindent {\bf Conflict of interest statement:} The author states that there is no conflict of interest.

\medskip

\noindent {\bf Data availability statement:} Data sharing not applicable to this article as no datasets were generated or analysed during the current study.

\bibliography{references}

\end{document}